\newtheorem{theorem}{Theorem}
\newtheorem{proposition}{Proposition}[section]
\newtheorem{lemma}[proposition]{Lemma}
\newtheorem{corollary}[proposition]{Corollary}
\theoremstyle{definition}
\newtheorem*{definition}{Definition}
\newtheorem*{remark}{Remark}
\begin{document}

\title{The local geometry of compact homogeneous Lorentz spaces}

\author{Felix G\"unther\footnote{Institut f\"ur Mathematik, Humboldt-Universit\"at zu Berlin, Unter den Linden 6, 10099 Berlin, Germany. Present address: Isaac Newton Institute for Mathematical Sciences, 20 Clarkson Road, Cambridge CB3 0EH, United Kingdom. E-mail: fguenth@math.tu-berlin.de}}

\date{}
\maketitle

\begin{abstract}
\noindent
In 1995, S.~Adams and G.~Stuck as well as A.~Zeghib independently provided a classification of non-compact Lie groups which can act isometrically and locally effectively on compact Lorentzian manifolds. In the case that the corresponding Lie algebra contains a direct summand isomorphic to the two-dimensional special linear algebra or to a twisted Heisenberg algebra, Zeghib also described the geometric structure of the manifolds. Using these results, we investigate the local geometry of compact homogeneous Lorentz spaces whose isometry groups have non-compact connected components. It turns out that they all are reductive. We investigate the isotropy representation and curvatures. In particular, we obtain that any Ricci-flat compact homogeneous Lorentz space is flat or has compact isometry group.\\ \vspace{0.5ex}

\noindent
\textbf{2010 Mathematics Subject Classification:} 53C30; 53C50.\\ \vspace{0.5ex}

\noindent
\textbf{Keywords:} Lorentz geometry, isometry groups, twisted Heisenberg group, homogeneous Lorentz spaces, Ricci curvature.
\end{abstract}

\raggedbottom
\setlength{\parindent}{0pt}
\setlength{\parskip}{1ex}


\section{Introduction}


The aim of our work is a detailed investigation of compact homogeneous Lorentzian manifolds whose isometry groups have non-compact connected components. We will provide the most important results and proofs of \cite{G11}. Focusing on the concepts and the main ideas, we refer the reader to \cite{G11} for more details.

In Section~\ref{sec:basics}, we introduce the notations and theorems concerning isometric and (locally) effective actions of Lie groups on compact Lorentzian manifolds. Let us shortly describe the content of the basic theorems, Theorems~\ref{th:algebraic} to~\ref{th:geometric_characterization}. Adams and Stuck (\cite{AdSt97a}) as well as Zeghib (\cite{Ze98a}) independently provided an algebraic classification of Lie groups that act isometrically and locally effectively on Lorentzian manifolds that are compact. More generally, Zeghib showed the same result for manifolds of finite volume. We will shortly describe the approach of Zeghib for the proof of Theorem~\ref{th:algebraic_classification}. Theorem~\ref{th:algebraic_classification} states that if a Lie group $G$ is acting isometrically and locally effectively on a Lorentzian manifold $M=(M,g)$ of finite volume, then there exist Lie algebras $\mathfrak{k}$, $\mathfrak{a}$, and $\mathfrak{s}$ such that the Lie algebra $\mathfrak{g}$ of $G$ is equal to the direct sum $\mathfrak{k}\oplus\mathfrak{a}\oplus\mathfrak{s}$. Here, $\mathfrak{k}$ is compact semisimple, $\mathfrak{a}$ is abelian, and $\mathfrak{s}$ is isomorphic to one of the following:
\begin{itemize}
\item the trivial algebra,
\item the two-dimensional affine algebra $\mathfrak{aff}(\mathds{R})$,
\item the $(2d+1)$-dimensional Heisenberg algebra $\mathfrak{he}_d$,
\item a certain $(2d+2)$-dimensional twisted Heisenberg algebra $\mathfrak{he}_d^\lambda$ ($\lambda \in \mathds{Z}_+^d$),
\item the two-dimensional special linear algebra $\mathfrak{sl}_2(\mathds{R})$.
\end{itemize}

The key idea behind the proof is introducing a certain symmetric bilinear form $\kappa$ on $\mathfrak{isom}(M)$, the Lie algebra of  the isometry group of $(M,g)$. For $X,Y \in \mathfrak{isom}(M)$, \[\kappa(X,Y):=\int\limits_M{g(\widetilde{X},\widetilde{Y})(x) d\mu(x)}.\] Here, $\widetilde{X},\widetilde{Y}$ are complete Killing vector fields corresponding to $X,Y$. In the case that $M$ is not compact but has finite volume, one has to restrict the integration to an $\text{Isom}(M)$-invariant non-empty open subset of $M$ such that $|g(\widetilde{X},\widetilde{Y})|$ is bounded by a constant depending only on $X$ and $Y$. Such an open set always exists. $\kappa$ induces in a canonical way a symmetric bilinear form on the Lie algebra $\mathfrak{g}$ of a Lie group $G$ acting isometrically and locally effectively on $M$.

$\kappa$ is ad-invariant and fulfills the so-called condition~\hyperlink{star}{$(\star)$}. A symmetric bilinear form $b$ on the Lie algebra $\mathfrak{g}$ of a Lie group $G$ fulfills this condition if for any subspace $V$ of $\mathfrak{g}$ such that the set of $X \in V$ generating a non-precompact one-parameter group in $G$ is dense in $V$, the restriction of $b$ to $V \times V$ is positive semidefinite, and its kernel has dimension at most one. Proposition~\ref{prop:condition_star} shows that $\kappa$ fulfills this condition.

Condition~\hyperlink{star}{$(\star)$} is the main tool for proving Theorem~\ref{th:algebraic}. Theorem~\ref{th:algebraic} describes the algebraic structure of connected non-compact Lie groups $G$ whose Lie algebras $\mathfrak{g}$ possess an ad-invariant symmetric bilinear form $\kappa$ fulfilling condition~\hyperlink{star}{$(\star)$}. The structure of the Lie algebras is exactly the same as the one in Theorem~\ref{th:algebraic_classification}. Furthermore, in the latter two cases, if $G$ is contained in the isometry group of the manifold, the subgroup generated by $\mathfrak{s}$ has compact center if and only if the subgroup is closed in the isometry group. This formulation corrects the claim of \cite{Ze98a} that the subgroup generated by $\mathfrak{s}$ has compact center.

Theorem~\ref{th:locally_free}, which was stated and shown in its general form in \cite{AdSt97a}, says that the subgroup generated by $\mathfrak{s}$ in Theorem~\ref{th:algebraic_classification} acts locally freely on $M$. This result is important for the geometric characterization of compact Lorentzian manifolds in Theorem~\ref{th:geometric_characterization} (which is due to Zeghib). This theorem considers the case that the Lorentzian manifold $M$ is compact and $\mathfrak{s}$ is isomorphic to the two-dimensional special linear algebra or to a twisted Heisenberg algebra. If $\mathfrak{s}\cong\mathfrak{sl}_2(\mathds{R})$, $M$ is covered isometrically by a warped product of the universal cover of the two-dimensional special linear group and a Riemannian manifold $N$. Else, if $\mathfrak{s}\cong\mathfrak{he}_d^\lambda$, $M$ is covered isometrically by a twisted product $S\times_{Z(S)}N$ of a twisted Heisenberg group $S$ and a Riemannian manifold $N$. Due to the correction of Theorem~\ref{th:algebraic} compared to \cite{Ze98a} and due to the fact that the invariance of a Lorentzian scalar product on a twisted Heisenberg algebra under the adjoint action of the nilradical (what Zeghib called \textit{essential ad-invariance}) is equivalent to ad-invariance (see Proposition~\ref{prop:lorentz_heisenberg}~(iii)), the formulation of Theorem~\ref{th:geometric_characterization} is different from the one in \cite{Ze98a}.

Our main theorems are formulated in Section~\ref{sec:main_theorems}. We will show them in Section~\ref{sec:homogeneous}, where we provide an analysis of compact homogeneous Lorentz spaces $M$, in particular of those whose isometry groups have non-compact connected components. We start by presenting a topological and geometric description of them in Theorem~\ref{th:homogeneous_characterization}, which is also slightly different from the corresponding result of Zeghib. Essentially, it shows that if the isometry group of $M$ has non-compact connected components, $M$ is covered isometrically either by a metric product of the universal cover of the two-dimensional special linear group and a compact homogeneous Riemannian manifold $N$, or by a twisted product $S\times_{Z(S)}N$ of a twisted Heisenberg group $S$ and a compact homogeneous Riemannian manifold $N$. Additionally, we give a covering of the identity component of $\textnormal{Isom}(M)$, which will be important for the investigation of the local geometry of $M$.

Theorem~\ref{th:homogeneous_reductive} shows what was originally stated in \cite{Ze98a}, namely that the connected components of the isotropy group are compact. We give an elegant proof using the ideas of Adams and Stuck in the proof of Theorem~\ref{th:locally_free}. Moreover, it turns out that every compact homogeneous Lorentzian manifold has a reductive representation defined in a natural way. For this representation, the induced bilinear form $\kappa$ plays an essential role.

Using a slightly different reductive representation than in Theorem~\ref{th:homogeneous_reductive} in the case that the isometry group contains a twisted Heisenberg group as a subgroup, we are able to describe the local geometry of compact homogeneous Lorentzian manifolds whose isometry groups have non-compact connected components in terms of the curvature of the manifold. We also investigate the isotropy representation of the manifold, our result concerning a decomposition into (weakly) irreducible summands being summarized in Theorem~\ref{th:isotropy_representation}.

Our results of Paragraph~\ref{par:curvature} directly yield the proof of Theorem~\ref{th:homogeneous_not_Ricci_flat} which states that the isometry group of any Ricci-flat compact homogeneous Lorentzian manifold has compact connected components. Together with two results in \cite{PiZe10} and \cite{RoSa96}, it follows that the isometry group of any Ricci-flat compact homogeneous Lorentzian manifold that is non-flat is in fact compact. Note that certain Cahen-Wallach spaces, which all are symmetric Lorentzian spaces, are Ricci-flat and non-flat. It is not known whether at least all Ricci-flat compact homogeneous Lorentzian manifolds are flat.


\section{Lie groups acting isometrically on compact Lorentzian manifolds}\label{sec:basics}

This section is devoted to describe the classification of Lie algebras of Lie groups acting isometrically and locally effectively on compact Lorentzian manifolds. Following the approach of Zeghib in \cite{Ze98a}, a certain symmetric bilinear form $\kappa$ on Lie algebras plays an important role. We also state a theorem about local freeness of the action of a subgroup of the isometry group, which is due to \cite{AdSt97a}. Following \cite{Ze98a}, we finally describe the topological and geometric structure of the manifolds.


\subsection{Notations and induced bilinear form}\label{par:notations}

Let $M=(M,g)$ be a semi-Riemannian manifold and $\mu$ the induced Lebesgue measure on $M$. We will consider only connected smooth real manifolds without boundary. Moreover, all Lie groups and Lie algebras will be real and finite-dimensional. Unless otherwise stated, all actions of Lie groups are smooth. If a Lie group $G$ acts isometrically on $M$, we identify a group element $f$ with the corresponding isometry, which allows us to speak about the differential $df$.

The following proposition is a classical result giving an identification of $\mathfrak{isom}(M)$, the Lie algebra of the isometry group $\textnormal{Isom}(M)$, with $\mathfrak{kill}_c(M)$, the Lie algebra of complete Killing vector fields, as vector spaces. One can find a proof in \cite[Proposition~9.33]{ON83}.

\begin{proposition}\label{prop:isometry_Killing}
For a semi-Riemannian manifold $M$, the map $\mathfrak{isom}(M) \to \mathfrak{kill}_c(M)$ defined by ${X \mapsto \widetilde{X}}$, $\widetilde{X}(x):=\frac{\partial}{\partial t}(\exp(tX) \cdot x) |_{t=0}$, is an anti-isomorphism of Lie algebras, that is, $[\widetilde{X},\widetilde{Y}]=-\widetilde{[X,Y]}$.
\end{proposition}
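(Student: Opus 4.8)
The plan is to check three things in turn: that each $\widetilde X$ is a complete Killing field (so the map really lands in $\mathfrak{kill}_c(M)$), that $X\mapsto\widetilde X$ is a linear isomorphism of vector spaces, and finally the bracket-reversing identity. The first two points are routine; the anti-homomorphism property carries the real content and is the standard fact that the fundamental vector fields of a \emph{left} action form a Lie algebra anti-homomorphism.

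First I would identify the flow of $\widetilde X$ explicitly. Writing $G:=\textnormal{Isom}(M)$ and letting $\tau_h\colon M\to M$ denote the isometry $x\mapsto h\cdot x$, differentiating the relation $\exp((t+s)X)=\exp(tX)\exp(sX)$ shows that $t\mapsto\exp(tX)\cdot x$ is the integral curve of $\widetilde X$ through $x$, i.e.\ $\Phi^{\widetilde X}_t=\tau_{\exp(tX)}$. Since $\exp(tX)$ exists for all $t\in\mathds{R}$, $\widetilde X$ is complete, and since each $\tau_{\exp(tX)}$ is an isometry we get $\mathcal L_{\widetilde X}g=\frac{d}{dt}\big|_{t=0}(\Phi^{\widetilde X}_t)^*g=0$, so $\widetilde X$ is Killing. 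Linearity of $X\mapsto\widetilde X$ is immediate from $\widetilde X(x)=d(\theta_x)_e(X)$, the differential of the fixed orbit map $\theta_x\colon h\mapsto h\cdot x$ applied to $X\in T_eG=\mathfrak{isom}(M)$. Injectivity follows because $\widetilde X=0$ forces $\exp(tX)=\textnormal{id}_M$ for all $t$, hence $X=0$ by effectiveness of the isometry action; surjectivity holds because a complete Killing field integrates to a one-parameter subgroup of $G$, necessarily of the form $t\mapsto\exp(tX)$.

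The heart of the argument is the bracket identity, for which I would isolate the pushforward formula $(\tau_h)_*\widetilde Y=\widetilde{\operatorname{Ad}(h)Y}$ for all $h\in G$. This comes from the conjugation computation
\[
((\tau_h)_*\widetilde Y)(x)=\frac{d}{ds}\Big|_{s=0} h\cdot\big(\exp(sY)\cdot(h^{-1}\cdot x)\big)=\frac{d}{ds}\Big|_{s=0}\big(h\exp(sY)h^{-1}\big)\cdot x=\frac{d}{ds}\Big|_{s=0}\exp\!\big(s\,\operatorname{Ad}(h)Y\big)\cdot x.
\]
Expressing the bracket as a Lie derivative along the flow of $\widetilde X$ and inserting $\Phi^{\widetilde X}_{-t}=\tau_{\exp(-tX)}$, I obtain
\[
[\widetilde X,\widetilde Y]=\mathcal L_{\widetilde X}\widetilde Y=\frac{d}{dt}\Big|_{t=0}\big(\Phi^{\widetilde X}_{-t}\big)_*\widetilde Y=\frac{d}{dt}\Big|_{t=0}\widetilde{\operatorname{Ad}(\exp(-tX))Y}.
\]
By the linearity established above, the tilde commutes with the $t$-derivative, and since $\frac{d}{dt}\big|_{t=0}\operatorname{Ad}(\exp(-tX))=-\operatorname{ad}(X)$ with $\operatorname{ad}(X)Y=[X,Y]$, the right-hand side equals $\widetilde{-[X,Y]}=-\widetilde{[X,Y]}$, as claimed.

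The main thing to watch is sign bookkeeping: the minus sign in the anti-isomorphism is produced jointly by the $(\Phi^{\widetilde X}_{-t})_*$ (equivalently $(\Phi^{\widetilde X}_t)^*$) in the definition of the Lie derivative and by the $h^{-1}$ in the conjugation that yields $\operatorname{Ad}$, so consistent conventions for flows, pushforwards, and $\operatorname{Ad}$ are essential. The only genuine dependence between the steps is that linearity of $X\mapsto\widetilde X$ must be in hand before pulling the $t$-derivative through the tilde in the last display.
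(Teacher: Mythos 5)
Your argument is correct, and it is the standard proof of this classical fact; the paper itself does not prove the proposition but cites \cite[Proposition~9.33]{ON83}, whose proof runs along exactly the lines you give (identifying the flow of $\widetilde X$ with $\tau_{\exp(tX)}$, the pushforward formula $(\tau_h)_*\widetilde Y=\widetilde{\operatorname{Ad}(h)Y}$, and differentiation at $t=0$). Your sign bookkeeping is consistent and the one genuine dependency you flag --- linearity of the tilde map before exchanging it with the $t$-derivative --- is handled correctly.
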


\begin{definition}
Let $M=(M,g)$ be a compact semi-Riemannian manifold and $G$ a Lie group acting isometrically and locally effectively on $M$. Then $\kappa$ is the \textit{induced bilinear form} on its Lie algebra $\mathfrak{g}$ defined by
\begin{equation*}
\kappa(X,Y):=\int\limits_M{g(\widetilde{X},\widetilde{Y})(x) d\mu(x)}.
\end{equation*}
\end{definition}

It can be easily checked that $\kappa$ is Ad-invariant. Moreover, in the case that the metric $g$ is Lorentzian, $\kappa$ fulfills the following remarkable property.

\begin{proposition}\label{prop:condition_star}
Let $M=(M,g)$ be a compact Lorentzian manifold and $V$ a subspace of $\mathfrak{isom}(M)$ such that the set of $X \in V$ generating a non-precompact one-parameter group of isometries is dense in $V$. Then the restriction of the induced bilinear form $\kappa$ to $V \times V$ is positive semidefinite and the dimension of its kernel is at most one.
\end{proposition}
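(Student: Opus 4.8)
The plan is to reduce everything to one pointwise fact: if a complete Killing field $\widetilde{X}$ on the compact Lorentzian manifold $M$ generates a non-precompact one-parameter group, then $\widetilde{X}$ is nowhere timelike, i.e.\ $g(\widetilde{X},\widetilde{X})\ge 0$ everywhere. I would prove this in contrapositive form. Fix an auxiliary Riemannian metric $h$ on the compact manifold $M$. Since an isometry $f$ is determined by the pair $(f(p),df_p)$ and the induced map into the $h$-orthonormal frame bundle over a fixed base point $p$ is a proper embedding, the group $\{\exp(tX)\}$ is precompact in $\mathrm{Isom}(M)$ if and only if the differentials $d(\exp(tX))_p$ are bounded in $h$-operator norm, for one (hence every) base point $p$. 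Now suppose $\widetilde{X}$ is timelike at some point $p$, so that $g(\widetilde{X},\widetilde{X})(p)<0$, and choose this $p$ as base point.

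Assuming non-precompactness, there are times $t_n$ and $h$-unit vectors $v_n\in T_pM$ with $|d\phi_{t_n}v_n|_h\to\infty$, where $\phi_t:=\exp(tX)$. Writing $q_n:=\phi_{t_n}(p)$, the function $g(\widetilde{X},\widetilde{X})$ is constant along the orbit, so $g(\widetilde{X},\widetilde{X})(q_n)=g(\widetilde{X},\widetilde{X})(p)<0$, and after passing to a subsequence with $q_n\to q_\ast$ also $g(\widetilde{X},\widetilde{X})(q_\ast)<0$. Since $\phi_{t_n}$ is a $g$-isometry with $d\phi_{t_n}\widetilde{X}_p=\widetilde{X}_{q_n}$, it maps the spacelike hyperplane $\widetilde{X}_p^{\perp}$ isometrically onto $\widetilde{X}_{q_n}^{\perp}$ while preserving the $g$-norm of each vector. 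Splitting $v_n$ $g$-orthogonally along $\widetilde{X}_p$ and $\widetilde{X}_p^\perp$, the $\widetilde{X}_p$-component contributes a bounded amount to $|d\phi_{t_n}v_n|_h$, so the blow-up is carried by the image $s_n$ of the $\widetilde{X}_p^\perp$-component: here $|s_n|_h\to\infty$ while $g(s_n,s_n)$ stays bounded and $g(s_n,\widetilde{X}_{q_n})=0$. Normalizing, $s_n/|s_n|_h$ subconverges to an $h$-unit vector $u_\ast\in\widetilde{X}_{q_\ast}^{\perp}$ with $g(u_\ast,u_\ast)=0$. But $\widetilde{X}_{q_\ast}$ is timelike, so $\widetilde{X}_{q_\ast}^{\perp}$ is positive definite and admits no nonzero null vector, a contradiction. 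This is where the Lorentzian signature enters, through positive definiteness of the orthogonal complement of a timelike vector.

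Positive semidefiniteness of $\kappa|_{V\times V}$ then follows quickly: for $X\in V$ generating a non-precompact group, $\kappa(X,X)=\int_M g(\widetilde{X},\widetilde{X})\,d\mu\ge 0$ by the pointwise fact, and since such $X$ are dense in $V$ and $X\mapsto\kappa(X,X)$ is continuous, $\kappa(X,X)\ge 0$ for all $X\in V$. For the kernel, which for a positive semidefinite form equals $\{X\in V:\kappa(X,X)=0\}$, I would first show every such $X$ satisfies $g(\widetilde{X},\widetilde{X})\equiv 0$: approximating $X$ by non-precompact generators $X_n\to X$ gives $g(\widetilde{X_n},\widetilde{X_n})\ge 0$ everywhere, and $\widetilde{X_n}\to\widetilde{X}$ uniformly on the compact $M$, so $g(\widetilde{X},\widetilde{X})\ge 0$ everywhere; together with $\int_M g(\widetilde{X},\widetilde{X})\,d\mu=0$ this forces $g(\widetilde{X},\widetilde{X})\equiv 0$. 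Since the kernel is a subspace, polarization shows that for any two kernel elements $X,Y$ the Killing fields $\widetilde{X},\widetilde{Y}$ are pointwise null and mutually $g$-orthogonal.

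Finally I would invoke the Lorentzian signature once more: two $g$-orthogonal null vectors are linearly dependent, so at every point the evaluation $X\mapsto\widetilde{X}(x)$ sends the kernel into a subspace of dimension at most one, whence $\widetilde{X}\wedge\widetilde{Y}\equiv 0$ for all kernel elements $X,Y$. If the kernel contained two independent elements, then on the nonempty open set where $\widetilde{X}\ne 0$ we could write $\widetilde{Y}=c\,\widetilde{X}$; inserting this into $\mathcal{L}_{\widetilde{Y}}g=0$ and using $\mathcal{L}_{\widetilde{X}}g=0$ yields $dc\odot\widetilde{X}^{\flat}=0$ (symmetric product), and since $\widetilde{X}^{\flat}\ne 0$ this forces $c$ to be locally constant, so $\widetilde{Y}-c\widetilde{X}$ is a Killing field vanishing on an open set and hence vanishing identically by unique continuation on the connected manifold $M$. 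Then $Y=cX$, contradicting independence, so the kernel has dimension at most one. The main obstacle is the pointwise fact of the first two paragraphs; the density and continuity reductions and the null-vector linear algebra are routine once it is established.
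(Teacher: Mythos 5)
Your proposal is correct and follows essentially the same route as the paper: both reduce the statement to the pointwise fact that a complete Killing field generating a non-precompact one-parameter group is nowhere timelike, established via the proper embedding of $\textnormal{Isom}(M)$ into the frame bundle, after which the semidefiniteness and one-dimensional-kernel claims follow by density, continuity, and the linear algebra of orthogonal null vectors. The only difference is in execution of the key step --- the paper (following Zeghib) changes the metric into a flow-invariant Riemannian one near a point where $\widetilde{X}$ is timelike and uses compactness of the Riemannian orthonormal frame bundle, whereas you run the contrapositive as a direct blow-up argument exploiting positive definiteness of $\widetilde{X}^{\perp}$ --- which is a variant of the same mechanism.
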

\begin{proof}
See \cite[Corollary~1.21]{G11}. This proof is due to \cite{Ze98a}. We essentially show that any Killing vector field $\widetilde{X}$ which is timelike somewhere generates a precompact one-parameter group. For this, we change the Lorentzian metric in a small neighborhood where $\widetilde{X}$ is timelike into a Riemannian metric and look at the embedding of the isometry group into the bundle of orthonormal frames.
\end{proof}

\begin{remark}
In the case that $M$ is not compact but has finite volume, $\kappa$ is well-defined if we restrict the integral to a certain $G$-invariant open subset $U \subset M$. Proposition~\ref{prop:condition_star} is also satisfied. The proof of the existence of such a subset $U$ is due to Zeghib and can be found in \cite[Proposition~1.16]{G11}. Roughly speaking, we are looking at the Gau{\ss} map, which maps each point of $M$ to the symmetric bilinear form on $\mathfrak{g}$ induced by $g$ and apply the F{\"u}rstenberg lemma, which essentially states that a connected Lie group acting on a vector space and preserving a finite measure acts precompactly on the support of the measure (see \cite[Corollary~1.18]{G11}).

The example before \cite[Proposition~1.16]{G11} shows that the restriction of the integral to some subset is necessary in general.
\end{remark}

\subsection{Lie algebras appearing in the classification theorem}\label{par:algebras}

We can consider the Lie algebra $\mathfrak{sl}_2(\mathds{R})$ of the two-dimensional special linear group $\textnormal{SL}_2(\mathds{R})$ as the subalgebra of the general linear algebra $\mathfrak{gl}_2(\mathds{R})$ that is spanned by \[e:= \begin{pmatrix} 0 & 1 \\ 0 & 0\end{pmatrix}, \ f:=\begin{pmatrix} 0 & 0 \\ 1 & 0\end{pmatrix}, \text{ and } h:=\begin{pmatrix} 1 & 0 \\ 0 & -1\end{pmatrix}.\] They satisfy the relations $[e,f]=h$, $[h,e]=2e$, and $[h,f]=-2f$ and therefore form an $\mathfrak{sl}_2$-triple.

\begin{definition}
A triple $\left\{e,f,h\right\}$ of elements in a Lie algebra $\mathfrak{g}$ satisfying $[e,f]=h$, $[h,e]=2e$, and $[h,f]=-2f$, is called an \textit{$\mathfrak{sl}_2$-triple}.
\end{definition}

We can identify the Lie algebra $\mathfrak{aff}(\mathds{R})$ with the subalgebra of $\mathfrak{sl}_2(\mathds{R})$ spanned by \[X:= \begin{pmatrix} \frac{1}{2} & 0 \\ 0 & -\frac{1}{2}\end{pmatrix} \text{ and } Y:= \begin{pmatrix} 0 & 1 \\ 0 & 0\end{pmatrix}.\] They satisfy the relation $[X,Y]=Y$.

Note that the Killing form of $\mathfrak{sl}_2(\mathds{R})$ is Lorentzian, whereas the Killing form of $\mathfrak{aff}(\mathds{R})$ is positive semidefinite and the kernel is exactly the span of $Y$.

A Lie algebra $\mathfrak{g}$ is a $2d+1$-dimensional Heisenberg algebra $\mathfrak{he}_d$ if and only if its center $\mathfrak{z(g)}$ is one-dimensional (spanned by $Z$) and there is a complementary vector space $V \subset \mathfrak{g}$ and a non-degenerate alternating bilinear form $\omega$ such that \[[X,Y]=\omega(X,Y)Z\] for all $X,Y \in V$. 

\begin{definition}
A set $\left\{Z, X_1, Y_1, \ldots, X_d, Y_d\right\}$ of elements in a Heisenberg algebra such that $Z$ generates the center and $\left\{X_1, Y_1, \ldots, X_d, Y_d\right\}$ is a symplectic basis of the complementary space $V$, that is, \[\omega(X_k,Y_j)=\delta_{jk} \text{ and } \omega(X_k,X_j)=0=\omega(Y_k,Y_j)\] for all $j$ and $k$, is called a \textit{canonical basis}. A \textit{Heisenberg group} is a Lie group with Lie algebra $\mathfrak{he}_d$.
\end{definition}

\begin{definition} $\widetilde{\textnormal{He}_d}$ denotes the simply-connected Lie group with Lie algebra $\mathfrak{he}_d$. For a (uniform) lattice $\Lambda$ of the center (isomorphic to $\mathds{Z}$), the quotient $\widetilde{\textnormal{He}_d}/\Lambda$ is denoted by $\textnormal{He}_d$.
\end{definition}

\begin{remark}
The quotient $\textnormal{He}_d$ is up to isomorphism independent of the choice of $\Lambda$.
\end{remark}

Let $\lambda=(\lambda_1,\ldots,\lambda_d) \in \mathds{R}_+^d$, $d> 0$. The corresponding twisted Heisenberg algebra $\mathfrak{he}_d^\lambda$ of dimension $2d+2$ is spanned by the elements $T,Z, X_1, Y_1, \ldots, X_d, Y_d$ and the non-vanishing Lie brackets are given by \[[X_k,Y_k]=\lambda_k Z, \ [T,X_k]=\lambda_k Y_k, \text{ and } [T,Y_k]=-\lambda_k X_k,\] $k=1,\ldots,d$. Thus, $\mathfrak{he}_d^\lambda=\mathds{R} T \inplus \mathfrak{he}_d$ is a semidirect sum, where $\mathfrak{he}_d$ can be identified with the subalgebra spanned by $Z, X_1, Y_1, \ldots, X_d, Y_d$.

\begin{definition}
A set $\left\{T, Z, X_1, Y_1, \ldots, X_d, Y_d\right\}$ of elements in a twisted Heisenberg algebra fulfilling the same relations as above is called a \textit{canonical basis}. A \textit{twisted Heisenberg group} is a Lie group with Lie algebra $\mathfrak{he}_d^\lambda$.
\end{definition}

Let $\widetilde{\textnormal{He}_d^\lambda}$ be the simply-connected Lie group with Lie algebra $\mathfrak{he}_d^\lambda$ and $\exp: \mathfrak{he}_d^\lambda \to \widetilde{\textnormal{He}_d^\lambda}$ be the exponential. Then, $\widetilde{\textnormal{He}_d^\lambda}=\exp(\mathds{R} T) \ltimes \widetilde{\textnormal{He}_d}$, where $\widetilde{\textnormal{He}_d}$ can be identified with $\exp(\mathfrak{he}_d)$.

If $\lambda \in \mathds{Q}^d_+$, then $\Lambda^\prime:=\textnormal{ker}(\exp \circ \textnormal{ad}: \mathds{R}T \to \textnormal{Aut}(\mathfrak{he}_d^\lambda))$ is a uniform lattice in $\mathds{R}T$. As in the case of the Heisenberg algebra, let $\Lambda$ be a lattice of the center of $\widetilde{\textnormal{He}_d}$.

\begin{definition}
Let $\lambda \in \mathds{Q}^d_+$. Then, $\textnormal{He}_d^\lambda:=\widetilde{\textnormal{He}_d^\lambda}/(\Lambda^\prime \times \Lambda)$. In an analogous way, we define $\overline{\textnormal{He}_d^\lambda}:=\widetilde{\textnormal{He}_d^\lambda}/\Lambda^\prime$.
\end{definition}
\begin{remark}
We have $\textnormal{He}_d^\lambda\cong \mathds{S}^1 \ltimes \textnormal{He}_d$ and $\overline{\textnormal{He}_d^\lambda} \cong \mathds{S}^1 \ltimes \widetilde{\textnormal{He}_d}$.
\end{remark}

By some algebraic calculations, we show that we can restrict to $\lambda \in \mathds{Z}^d_+$, see \cite[Lemma~3.2]{AdSt97a} or \cite[Lemma~1.11]{G11}.

\begin{lemma}\label{lem:isom_heisenberg}
The isomorphism classes of $\widetilde{\textnormal{He}_d^\lambda}$, $\lambda \in \mathds{Q}^d_+$, are in one-to-one-correspondence with the set $\mathds{Z}_+^d/\sim$, where $\lambda \sim \eta$ if and only if there exists $a \in \mathds{\mathds{R}}_+$, such that \[ \left\{\lambda_1,\ldots,\lambda_d\right\}=\left\{a\eta_1,\ldots,a\eta_d\right\}.\]
\end{lemma}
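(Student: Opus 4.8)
The plan is to reduce the statement to a classification of Lie algebras up to isomorphism and to recover the parameters $\lambda_k$ from a canonical linear operator attached to $\mathfrak{he}_d^\lambda$. Since $\widetilde{\textnormal{He}_d^\lambda}$ is by definition the simply-connected group with Lie algebra $\mathfrak{he}_d^\lambda$, two such groups are isomorphic if and only if their Lie algebras are, so it suffices to classify the algebras $\mathfrak{he}_d^\lambda$. A direct computation from the bracket relations (using $\lambda_k>0$ for every $k$) shows that the center is $\mathfrak{z}(\mathfrak{he}_d^\lambda)=\mathds{R}Z$ and the derived algebra is $[\mathfrak{he}_d^\lambda,\mathfrak{he}_d^\lambda]=\mathfrak{he}_d=\textnormal{span}(Z,X_1,Y_1,\ldots,X_d,Y_d)$, a characteristic ideal of codimension one. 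I would then introduce the $2d$-dimensional space $V:=[\mathfrak{he}_d^\lambda,\mathfrak{he}_d^\lambda]/\mathfrak{z}(\mathfrak{he}_d^\lambda)$ and the endomorphism $A_T$ of $V$ induced by $\textnormal{ad}(T)$; its complex eigenvalues are exactly $\pm i\lambda_1,\ldots,\pm i\lambda_d$, which is the source of the parameters.

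For the elementary direction I would simply exhibit the isomorphisms realizing $\sim$. The change of basis $(T,Z,X_k,Y_k)\mapsto(aT,a^{-1}Z,X_k,Y_k)$ turns the defining relations with parameter vector $\lambda$ into those with parameter vector $a\lambda$, so $\mathfrak{he}_d^\lambda\cong\mathfrak{he}_d^{a\lambda}$ for every $a\in\mathds{R}_+$; permuting the indices $k$ yields invariance under permutations. Hence $\lambda\sim\eta$ implies $\mathfrak{he}_d^\lambda\cong\mathfrak{he}_d^\eta$, so the assignment $[\lambda]\mapsto[\widetilde{\textnormal{He}_d^\lambda}]$ is well defined on $\mathds{Z}_+^d/\!\sim$. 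Surjectivity onto the classes with $\lambda\in\mathds{Q}_+^d$ follows by clearing denominators: multiplying a rational $\lambda$ by the least common multiple of the denominators of its entries produces an $\sim$-equivalent integer vector.

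The heart of the argument, and the step I expect to be the main obstacle, is injectivity: I must show that the multiset $\{\lambda_1,\ldots,\lambda_d\}$ up to a positive scalar is an isomorphism invariant. The key point is that the operator $A_T$ is attached to the algebra \emph{canonically up to a nonzero scalar}. Indeed, any element projecting to a generator of the one-dimensional quotient $\mathfrak{he}_d^\lambda/[\mathfrak{he}_d^\lambda,\mathfrak{he}_d^\lambda]$ has the form $cT+h$ with $c\neq0$ and $h\in\mathfrak{he}_d$; since $\mathfrak{he}_d$ is two-step nilpotent with $[\mathfrak{he}_d,\mathfrak{he}_d]\subseteq\mathds{R}Z$, the map $\textnormal{ad}(h)$ sends $\mathfrak{he}_d$ into $\mathds{R}Z$ and kills $Z$, hence induces the zero endomorphism of $V$. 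Therefore $A_{cT+h}=c\,A_T$, independently of $h$. Now a Lie-algebra isomorphism $\phi\colon\mathfrak{he}_d^\lambda\to\mathfrak{he}_d^\eta$ preserves center and derived algebra, so it induces a linear isomorphism $\bar\phi\colon V_\lambda\to V_\eta$; writing $\phi(T)=cT'+h$ with $c\neq0$ (as $\phi$ is an isomorphism on the quotient), the identity $\phi\circ\textnormal{ad}(X)=\textnormal{ad}(\phi X)\circ\phi$ gives $\bar\phi\circ A_T=c\,A_{T'}\circ\bar\phi$, so $A_T$ and $c\,A_{T'}$ are conjugate.

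Comparing the purely imaginary eigenvalue multisets $\{\pm i\lambda_k\}$ and $\{\pm i c\,\eta_k\}$, with $c\in\mathds{R}\setminus\{0\}$ and all $\lambda_k,\eta_k>0$, forces $\{\lambda_1,\ldots,\lambda_d\}=\{|c|\eta_1,\ldots,|c|\eta_d\}$, that is, $\lambda\sim\eta$. Combining the three directions---well-definedness, surjectivity, and injectivity---yields the claimed bijection between the isomorphism classes of $\widetilde{\textnormal{He}_d^\lambda}$, $\lambda\in\mathds{Q}_+^d$, and $\mathds{Z}_+^d/\!\sim$. The only genuinely delicate ingredient is the scalar-rigidity of $A_T$ under the choice of lift $T$, which is precisely where the two-step nilpotency of $\mathfrak{he}_d$ is used.
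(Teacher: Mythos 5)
Your proof is correct: the reduction to Lie algebras via simple connectedness, the explicit scaling/permutation isomorphisms for one direction, and the recovery of $\lambda$ up to a positive scalar and permutation from the spectrum of the operator induced on $[\mathfrak{he}_d^\lambda,\mathfrak{he}_d^\lambda]/\mathfrak{z}(\mathfrak{he}_d^\lambda)$ by any lift of a generator of the abelianization (with the two-step nilpotency of $\mathfrak{he}_d$ giving the scalar rigidity) all hold up. The paper itself only cites \cite[Lemma~3.2]{AdSt97a} and \cite[Lemma~1.11]{G11} for this ``algebraic calculation,'' and the cited argument proceeds in essentially the same way, so there is nothing to flag.
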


On any twisted Heisenberg algebra, there exists up to isomorphism only one ad-invariant Lorentz form. Astonishingly, invariance under $\textnormal{ad}(\mathfrak{he}_d)$ is equivalent to invariance under $\textnormal{ad}(\mathfrak{he}_d^\lambda)$ in this case. That is the main reason for our difference later in Theorem~\ref{th:geometric_characterization} to the original theorem in \cite{Ze98a}.

\begin{proposition}\label{prop:lorentz_heisenberg}
The following are true:

\begin{enumerate}
\item A twisted Heisenberg algebra $\mathfrak{he}_d^\lambda$ admits an ad-invariant Lorentz form.
\item Any two ad-invariant Lorentzian scalar products $b_1,b_2$ on $\mathfrak{he}_d^\lambda$ are equivalent, that is, there is an automorphism $L:\mathfrak{he}_d^\lambda \to \mathfrak{he}_d^\lambda$ such that $b_1(X,Y)=b_2(L(X),L(Y))$ for all $X,Y \in \mathfrak{he}_d^\lambda$.
\item A Lorentzian scalar product on $\mathfrak{he}_d^\lambda$ is $\textnormal{ad}(\mathfrak{he}_d)$-invariant if and only if it is $\textnormal{ad}(\mathfrak{he}_d^\lambda)$-invariant.
\end{enumerate}
\end{proposition}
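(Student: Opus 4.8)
The plan is to reduce all three parts to one computation: the determination of all $\mathrm{ad}$-invariant symmetric bilinear forms on $\mathfrak{he}_d^\lambda$ in terms of the canonical basis $\{T,Z,X_1,Y_1,\dots,X_d,Y_d\}$. A symmetric form $b$ is $\mathrm{ad}$-invariant exactly when every $\mathrm{ad}(A)$ is skew with respect to $b$, i.e. $b([A,U],V)+b(U,[A,V])=0$ for all $U,V$; since the brackets are known on basis vectors, this is a finite linear system in the entries of $b$, and each of (i)--(iii) will follow from its solution together with the action of a few explicit automorphisms.

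I would first solve this system. Set $W:=\operatorname{span}\{X_1,Y_1,\dots,X_d,Y_d\}$. Imposing skewness of $\mathrm{ad}(X_k)$ and $\mathrm{ad}(Y_k)$ and evaluating on the available basis vectors yields, in order: $b(Z,Z)=0$ and $b(Z,W)=0$ (from pairing $[X_k,Y_k]=\lambda_k Z$ against $Z$ and against the $X_j,Y_j$); $b(Z,T)=b(X_k,X_k)=b(Y_k,Y_k)=:c$ for every $k$ (from pairing against $T$, using $[Y_k,T]=\lambda_k X_k$ and $[X_k,T]=-\lambda_k Y_k$); and, crucially, $b(T,W)=0$ together with the vanishing of every mixed entry $b(X_j,X_k),b(Y_j,Y_k),b(X_j,Y_k)$ for $j\ne k$ and of $b(X_k,Y_k)$ --- all of these drop out of the relations obtained by pairing $[X_k,\,\cdot\,]$ or $[Y_k,\,\cdot\,]$ against $T$, because $[X_k,X_j]=[X_k,Y_j]=0$ for $j\ne k$ and $[X_k,X_k]=0$. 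Hence $b$ is completely pinned down by the two free parameters $c=b(Z,T)$ and $\beta=b(T,T)$: it is the orthogonal sum of the form $\begin{pmatrix}\beta&c\\ c&0\end{pmatrix}$ on $\operatorname{span}\{T,Z\}$ and $c$ times the standard inner product on $W$.

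This rigidity gives (i) and (ii) immediately. For (i) the choice $c=1,\beta=0$ produces a form whose $\{T,Z\}$-block is a hyperbolic plane and whose restriction to $W$ is positive definite, hence of signature $(2d+1,1)$; a direct check on basis vectors confirms $\mathrm{ad}$-invariance. For (ii), nondegeneracy forces $c\ne0$, and the signature count shows the form is Lorentzian precisely when $c>0$. One then normalizes using two automorphisms that preserve the bracket relations: $\psi_t\colon T\mapsto T+tZ$ (all else fixed), under which $\beta\mapsto\beta+2tc$, kills $\beta$; and the grading automorphism $\phi_s\colon X_k\mapsto sX_k,\,Y_k\mapsto sY_k,\,Z\mapsto s^2Z,\,T\mapsto T$, under which $c\mapsto s^2c$, normalizes $c$ to $1$. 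Thus every Lorentzian $\mathrm{ad}$-invariant form is equivalent to the standard one, so any two are equivalent.

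The interesting part is (iii); the ``only if'' direction is trivial since $\mathfrak{he}_d\subset\mathfrak{he}_d^\lambda$. For the converse I would note that the entire rigidity established above used only the skewness of $\mathrm{ad}(X_k)$ and $\mathrm{ad}(Y_k)$, that is, $\mathrm{ad}(\mathfrak{he}_d)$-invariance alone. Consequently an $\mathrm{ad}(\mathfrak{he}_d)$-invariant $b$ already satisfies $b(W,\operatorname{span}\{T,Z\})=0$ and $b|_W=c\,\langle\,\cdot\,,\,\cdot\,\rangle_{\mathrm{std}}$. Since $\mathrm{ad}(T)$ annihilates $T$ and $Z$ and acts on $W$ as the block rotation $J$ with $JX_k=\lambda_kY_k$, $JY_k=-\lambda_kX_k$ --- which is skew for the standard inner product and hence for any scalar multiple of it --- the remaining relation $b(JU,V)+b(U,JV)=0$ holds automatically, and $\mathrm{ad}(T)$-invariance follows. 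The main obstacle is therefore not any hard estimate but the careful bookkeeping of the linear system, in particular recognizing that pairing the $\mathrm{ad}(\mathfrak{he}_d)$-relations against $T$ is exactly what annihilates the off-diagonal $W$-entries; once that is seen, the ``astonishing'' equivalence in (iii) is a one-line consequence.
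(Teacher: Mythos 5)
Your proof is correct and follows essentially the same route as the paper's own (only sketched there): you show that ad-invariance pins the form down to the two parameters $b(T,Z)$ and $b(T,T)$, normalize these with explicit automorphisms to get (ii), and observe that the whole rigidity computation already uses only $\mathrm{ad}(\mathfrak{he}_d)$-skewness, so that $\mathrm{ad}(T)$-invariance comes for free, which is exactly (iii). The only blemish is that you swap the labels ``if''/``only if'' at the start of part (iii); the direction you call trivial (invariance under the larger algebra implies invariance under the subalgebra) and the direction you actually prove are the right ones, so the substance is fine.
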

\begin{proof}
(i) Let $\left\{T, Z, X_1, Y_1, \ldots, X_d, Y_d\right\}$ be a canonical basis of $\mathfrak{he}_d^\lambda$. Now define a Lorentzian scalar product $\langle \cdot,\cdot \rangle$ such that $\left\{ X_1, Y_1, \ldots, X_d, Y_d\right\}$ is orthonormal and orthogonal to $\left\{T,Z\right\}$, ${\langle T,T \rangle=\langle Z,Z \rangle=0}$, and $\langle T,Z \rangle=1$. It is easy to check that this scalar product is ad-invariant, see \cite[Equation~(1)]{AdSt97a} or \cite[Proposition~1.13~(i)]{G11}.

(ii) See \cite[Proposition~5.2]{Ze98a} or \cite[Proposition~4.21]{G11}. The idea is to show that any ad-invariant Lorentzian scalar product on $\mathfrak{he}_d^\lambda$ is determined by two real parameters and then to write down the automorphisms explicitly.

(iii) This is done similarly to the first two parts, see \cite[Proposition~1.13~(ii)]{G11}.
\end{proof}


\subsection{Basic theorems}\label{par:basic_theorems}

Following \cite{Ze98a}, the key theorem providing the classification of the Lie algebras is the following algebraic result.

\begin{theorem}\label{th:algebraic}
Let $G$ be a connected non-compact Lie group. Assume that there is an ad-invariant symmetric bilinear form $\kappa$ on its Lie algebra $\mathfrak{g}$ that fulfills the following non-degeneracy condition:

\par
\begingroup
\leftskip=1em
\noindent \hypertarget{star}{$(\star)$} Let $V$ be a subspace of $\mathfrak{g}$ such that the set of $X \in V$ generating a non-precompact one-parameter group in $G$ is dense in $V$. Then, the restriction of $\kappa$ to $V \times V$ is positive semidefinite and its kernel has dimension at most one.
\par
\endgroup

Then, $\mathfrak{g}=\mathfrak{k}\oplus\mathfrak{a}\oplus\mathfrak{s}$ is a $\kappa$-orthogonal direct sum of a compact semisimple Lie algebra $\mathfrak{k}$, an abelian algebra $\mathfrak{a}$, and a Lie algebra $\mathfrak{s}$ that is either trivial, isomorphic to $\mathfrak{aff}(\mathds{R})$, to a Heisenberg algebra $\mathfrak{he}_d$, to a twisted Heisenberg algebra $\mathfrak{he}_d^\lambda$ with $\lambda \in \mathds{Z}_+^d$, or to $\mathfrak{sl}_2(\mathds{R})$.

Furthermore, the following are true:
\begin{enumerate}
\item If $\mathfrak{s}$ is trivial, $\kappa$ is positive semidefinite and its kernel has dimension at most one. If $\mathfrak{s}$ is non-trivial, $\kappa$ restricted to $\mathfrak{a} \times \mathfrak{a}$ and $\mathfrak{k} \times \mathfrak{k}$ is positive definite.

\item Let $\mathfrak{s}\cong\mathfrak{aff}(\mathds{R})$. Then, the restriction of $\kappa$ to $\mathfrak{s} \times \mathfrak{s}$ is positive semidefinite and its kernel is exactly the span of the generator of the translations in the affine group.

\item Let $\mathfrak{s}\cong\mathfrak{he}_d$. Then, the restriction of $\kappa$ to $\mathfrak{s} \times \mathfrak{s}$ is positive semidefinite and its kernel is exactly the center of $\mathfrak{he}_d$.

\item Let $\mathfrak{s}\cong\mathfrak{he}_d^\lambda$. Then, $\kappa$ restricted to $\mathfrak{s} \times \mathfrak{s}$ is a Lorentz form. The subgroup in $G$ generated by $\mathfrak{s}$ is isomorphic to $\textnormal{He}_d^\lambda$ or $\overline{\textnormal{He}_d^\lambda}$ if it is closed in $G$ or not, respectively. Moreover, the abelian subgroup generated by $\mathfrak{a}\oplus\mathfrak{z(s)}$ is compact.

\item Let $\mathfrak{s}\cong\mathfrak{sl}_2(\mathds{R})$. Then, $\kappa$ restricted to $\mathfrak{s} \times \mathfrak{s}$ is a positive multiple of the Killing form of $\mathfrak{s}$. The subgroup in $G$ generated by $\mathfrak{s}$ is isomorphic to some $\textnormal{PSL}_k(2,\mathds{R})$, the $k$-covering of $\textnormal{PSL}(2,\mathds{R})$, if and only if it is closed in $G$. Moreover, the abelian subgroup generated by $\mathfrak{a}$ is compact.
\end{enumerate}
\end{theorem}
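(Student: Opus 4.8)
The plan is to extract everything from two consequences of the hypotheses and one recurring device. Ad-invariance of $\kappa$ means that each $\textnormal{ad}(X)$ is $\kappa$-skew, i.e. $\kappa([X,Y],W)=-\kappa(Y,[X,W])$, and that the kernel $\{X:\kappa(X,\cdot)=0\}$ is an ideal. Precompactness is translated into the Lie algebra by the remark that if $\textnormal{ad}(X)$ has an eigenvalue with nonzero real part (in particular if $\textnormal{ad}(X)$ is nonzero nilpotent, or if $X$ is $\mathds{R}$-split) then $\exp(\mathds{R}X)$ is non-precompact. The device is this: condition~\hyperlink{star}{$(\star)$} forbids any subspace $V$ in which non-precompact generators are dense and on which $\kappa|_V$ has a negative direction or a kernel of dimension at least two. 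The cleanest way to produce such a forbidden $V$ is to pair a $\kappa$-negative elliptic (precompact) direction $k$ with a $\kappa$-positive non-precompact direction $h$ that commutes with $k$: then $\exp(t(ak+bh))=\exp(tak)\exp(tbh)$ is non-precompact whenever $b\neq 0$, so $\mathrm{span}(k,h)$ is non-precompact-dominated while $\kappa$ on it is indefinite. I would fix a Levi decomposition $\mathfrak{g}=\mathfrak{r}\rtimes\mathfrak{l}$ and treat $\mathfrak{l}$ and $\mathfrak{r}$ in turn.

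For the Levi factor $\mathfrak{l}=\bigoplus_i\mathfrak{l}_i$, ad-invariance and simplicity give $\kappa|_{\mathfrak{l}_i}=c_iB_i$ with $B_i$ the Killing form. Every non-compact factor contains an $\mathfrak{sl}_2$-triple $\{e,h,f\}$ with $h$ split; on the plane spanned by $h$ and $e+f$ the Killing form is positive definite and every nonzero element is $\mathds{R}$-split, hence non-precompact, so applying~\hyperlink{star}{$(\star)$} there forces $c_i>0$ and $\kappa$ is therefore negative on the elliptic directions of $\mathfrak{l}_i$. Now the device applies: for any non-compact simple Lie algebra other than $\mathfrak{sl}_2(\mathds{R})$ there is an elliptic element $k$ with $B_i(k,k)<0$ whose centralizer contains a non-precompact element $h$ with $B_i(h,h)>0$, giving a forbidden plane, whereas $\mathfrak{sl}_2(\mathds{R})$ is spared precisely because the centralizer of its rotation generator is the one-dimensional torus it spans. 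The same device, applied across two commuting ideals, shows that at most one such factor can occur. The compact factors, on which~\hyperlink{star}{$(\star)$} is vacuous, are collected into $\mathfrak{k}$; pairing their directions with a non-precompact $h$ pins down the sign so that $\kappa|_{\mathfrak{k}}$ is positive, and definite once $\mathfrak{s}$ is non-trivial. Statement~(v) is then the assertion $\kappa|_{\mathfrak{sl}_2}=c\,B$ with $c>0$, which is Lorentzian and consistent with~\hyperlink{star}{$(\star)$} since the split-and-nilpotent directions are not dense in $\mathfrak{sl}_2(\mathds{R})$.

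For the radical I would first treat the nilradical $\mathfrak{n}$, where non-precompact generators are dense, so~\hyperlink{star}{$(\star)$} makes $\kappa|_{\mathfrak{n}}$ positive semidefinite of corank at most one. Since each $\textnormal{ad}(X)$ is $\kappa$-skew and preserves $\ker(\kappa|_{\mathfrak{n}})$, the induced operator on the positive-definite quotient $\mathfrak{n}/\ker(\kappa|_{\mathfrak{n}})$ is skew-symmetric and nilpotent, hence zero; therefore $[\mathfrak{n},\mathfrak{n}]\subseteq\ker(\kappa|_{\mathfrak{n}})$, a space of dimension at most one. Thus $\mathfrak{n}$ is abelian, or---up to an abelian factor that I would absorb into $\mathfrak{a}$---a Heisenberg algebra $\mathfrak{he}_d$ whose center is exactly the null line, which is statement~(iii). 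Bringing in the non-nilpotent part of $\mathfrak{r}$, ad-invariance makes its action on $\mathfrak{n}$ skew, hence elliptic on the quotient and, after subtracting compact directions, generated by a single element $T$; this yields $\mathfrak{aff}(\mathds{R})$ (the null line being now acted on with a real eigenvalue) or the twisted Heisenberg algebra $\mathfrak{he}_d^\lambda$, with $\lambda$ normalized to $\mathds{Z}_+^d$ by Lemma~\ref{lem:isom_heisenberg} and the form identified by Proposition~\ref{prop:lorentz_heisenberg}. The device again forbids a second non-compact summand, so exactly one $\mathfrak{s}$ appears.

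Finally I would assemble the global statement. Ad-invariance forces mutual $\kappa$-orthogonality of the ideals and, together with the device, that $\mathfrak{k}$, $\mathfrak{a}$ and $\mathfrak{s}$ commute, so that $\mathfrak{g}=\mathfrak{k}\oplus\mathfrak{a}\oplus\mathfrak{s}$ is a Lie-algebra direct sum; the signature claims~(i)--(iii) are then read off from the explicit Killing forms in Paragraph~\ref{par:algebras}. The genuinely global statements~(iv) and~(v) require returning to $G$. Compactness of the subgroup generated by $\mathfrak{a}$, respectively $\mathfrak{a}\oplus\mathfrak{z}(\mathfrak{s})$, follows from the device: a non-precompact direction there commutes with, and forms an indefinite plane with, a suitable elliptic direction of $\mathfrak{s}$ (a timelike direction for $\mathfrak{a}$, the generator $T$ for the center $Z$), contradicting~\hyperlink{star}{$(\star)$}. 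I expect the main obstacle to lie here and in the preceding reassembly: the exclusion of all other algebras runs on the single clean device, but establishing that the three pieces genuinely commute, controlling the nilradical together with the toral action that twists it, and---above all---determining the covering type of the subgroup generated by $\mathfrak{s}$ (the dichotomy $\textnormal{He}_d^\lambda$ versus $\overline{\textnormal{He}_d^\lambda}$, respectively among the $\textnormal{PSL}_k(2,\mathds{R})$), which is governed by whether that subgroup is closed in $G$ and is invisible to $\kappa$ alone.
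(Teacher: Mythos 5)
Your central device --- forbidding a plane spanned by a commuting pair consisting of a precompact, $\kappa$-negative direction and a non-precompact, $\kappa$-nonnegative one --- is indeed the engine of the proof the paper cites (\cite[Chapter~3]{G11}, \cite{Ze98a}), and your treatment of the Levi factor (only $\mathfrak{sl}_2(\mathds{R})$ survives, with $\kappa = cB$, $c>0$) and of the nilradical ($\textnormal{ad}(X)|_{\mathfrak{n}}$ is $\kappa$-skew and nilpotent, hence zero on the positive-definite quotient, so $[\mathfrak{n},\mathfrak{n}]\subseteq\ker(\kappa|_{\mathfrak{n}})$, which has dimension at most one) is sound. But two steps are genuinely missing rather than merely compressed. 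The first is the passage from ``the non-nilpotent part of the radical acts $\kappa$-skewly on $\mathfrak{n}$'' to ``generated by a single element $T$.'' This is not an instance of your device: the configurations you must exclude --- a second torus direction twisting $\mathfrak{he}_d$, a nontrivial action of the compact Levi factor $\mathfrak{k}$ on $\mathfrak{n}$, a simultaneous hyperbolic and elliptic action, an $\mathfrak{aff}(\mathds{R})$-type action on a higher-dimensional abelian $\mathfrak{n}$ --- involve extra generators that are elliptic or that act only on $\kappa$-isotropic directions, so the subspaces they span do \emph{not} have dense non-precompact generators and \hyperlink{star}{$(\star)$} does not apply to them directly. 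Ruling them out requires the ad-invariance identities (for instance $\kappa(T,Z)=\kappa(Y_k,Y_k)>0$, forced by $[X_k,Y_k]=\lambda_k Z$ and $[T,X_k]=\lambda_k Y_k$) combined with the corank-at-most-one clause of \hyperlink{star}{$(\star)$} on carefully chosen subspaces; this case analysis is the bulk of the cited proof, and it is also what establishes that $\mathfrak{k}$, $\mathfrak{a}$, $\mathfrak{s}$ actually commute, which you assert but do not derive.

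The second gap is that you leave parts (iv) and (v) essentially open: the dichotomy $\textnormal{He}_d^\lambda$ versus $\overline{\textnormal{He}_d^\lambda}$, the statement that $S\cong\textnormal{PSL}_k(2,\mathds{R})$ if and only if $S$ is closed in $G$, and the compactness of the subgroups generated by $\mathfrak{a}$ and by $\mathfrak{a}\oplus\mathfrak{z}(\mathfrak{s})$. These are not decorations: the remark following the theorem stresses that precisely this point is where the statement corrects \cite{Ze98a}, so a proof that declares it ``invisible to $\kappa$ alone'' has not proved the theorem. The information is in fact extractable from \hyperlink{star}{$(\star)$}: applied to a single line it shows that every $\kappa$-negative vector generates a precompact one-parameter group; hence $\exp(\mathds{R}(e-f))$ is precompact, and since the center of $\widetilde{\textnormal{SL}_2(\mathds{R})}$ lies on this one-parameter group, closedness of $S$ forces its center to be discrete inside a compact torus, hence finite. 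In the twisted Heisenberg case one first gets precompactness of $\exp(\mathds{R}(T-cZ))$ for all $c$ with $\kappa(T-cZ,T-cZ)<0$, and taking quotients of two such commuting precompact groups yields precompactness of $\exp(\mathds{R}Z)$ --- note that your proposed pairing of $Z$ directly with $T$ does not work, since the precompactness of $\exp(\mathds{R}T)$ is not known at that stage. Only with these precompact $\kappa$-negative directions in hand does your device apply to $\mathfrak{a}$, and only then does the closed/non-closed dichotomy for $S$ reduce to whether its center is compact.
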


\begin{remark}
The proof consists essentially of case distinctions and algebraic calculations. One can find the proof in \cite[Chapter~3 (Theorem~1)]{G11}, where in the beginning a diagram shows a summary of the proof. The proof is mainly due to \cite[Th{\'e}or{\`e}me~alg{\'e}brique~1.11]{Ze98a}, where the following was missing: For both cases $\mathfrak{s}\cong\mathfrak{sl}_2(\mathds{R})$ and $\mathfrak{s}\cong\mathfrak{he}_d^\lambda$, $\lambda \in \mathds{Z}_+^d$, 
there exist a connected non-compact Lie group $G$ and a symmetric bilinear form $\kappa$ on its Lie algebra $\mathfrak{g}$ fulfilling condition~\hyperlink{star}{$(\star)$} such that $\mathfrak{g}=\mathfrak{s}\oplus\mathds{R}$ and the subgroup generated by $\mathfrak{s}$ is not closed in $G$. In particular, it is not isomorphic to a finite covering of $\textnormal{PSL}(2,\mathds{R})$ or isomorphic to $\textnormal{He}_d^\lambda$, respectively. Examples are given in \cite[Propositions~3.19 and~3.24]{G11}.
\end{remark}

According to Proposition~\ref{prop:condition_star}, Theorem~\ref{th:algebraic} applies to a connected non-compact closed Lie subgroup $G$ of $\textnormal{Isom}(M)$, where $M$ is a compact Lorentzian manifold. More generally, we obtain the following classification result \cite[Theorem~2]{G11}.

\begin{theorem}\label{th:algebraic_classification}
Let $M$ be a compact Lorentzian manifold and $G$ a connected Lie group acting isometrically and locally effectively on $M$. Then, its Lie algebra $\mathfrak{g}=\mathfrak{k}\oplus\mathfrak{a}\oplus\mathfrak{s}$ is a direct sum of a compact semisimple Lie algebra $\mathfrak{k}$, an abelian algebra $\mathfrak{a}$, and a Lie algebra $\mathfrak{s}$ that is either trivial, isomorphic to $\mathfrak{aff}(\mathds{R})$, to a Heisenberg algebra $\mathfrak{he}_d$, to a twisted Heisenberg algebra $\mathfrak{he}_d^\lambda$ with $\lambda \in \mathds{Z}_+^d$, or to $\mathfrak{sl}_2(\mathds{R})$.

Suppose that $G$ is a Lie subgroup of the isometry group $\textnormal{Isom}(M)$. If $\mathfrak{s}\cong\mathfrak{he}_d^\lambda$, the subgroup generated by $\mathfrak{s}$ is isomorphic to $\textnormal{He}_d^\lambda$ or $\overline{\textnormal{He}_d^\lambda}$ if it is closed in $\textnormal{Isom}(M)$ or not, respectively. If $\mathfrak{s}\cong\mathfrak{sl}_2(\mathds{R})$, the subgroup generated by $\mathfrak{s}$ is isomorphic to some $\textnormal{PSL}_k(2,\mathds{R})$ if and only if it is closed in $\textnormal{Isom}(M)$.
\end{theorem}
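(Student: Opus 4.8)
The plan is to reduce the general locally effective action to the case of a closed subgroup, where Theorem~\ref{th:algebraic} applies directly through Proposition~\ref{prop:condition_star}. Since the action is locally effective, its kernel $N$ is discrete, hence central, so passing to $G/N$ alters neither $\mathfrak{g}$ nor the induced form $\kappa$; I may therefore assume $G$ is a connected, possibly non-closed, Lie subgroup of $\textnormal{Isom}(M)$ with $\mathfrak{g}\subseteq\mathfrak{isom}(M)$. I would then work with the identity component $H$ of the closure $\overline{G}$ in $\textnormal{Isom}(M)$, a closed connected Lie subgroup whose algebra $\mathfrak{h}$ contains $\mathfrak{g}$, onto which $\kappa$ restricts as an $\textnormal{ad}$-invariant form, and in which $G$ is dense. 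This detour is genuinely necessary: for an irrational, everywhere timelike translation flow on a flat Lorentz torus one has $G\cong\mathds{R}$ with $\kappa$ negative definite on $\mathfrak{g}$, so condition~\hyperlink{star}{$(\star)$} fails for $G$ itself although the conclusion still holds.

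I would then split according to whether $H$ is compact. If it is, then $\mathfrak{g}$ is a subalgebra of the compact algebra $\mathfrak{h}$; restricting an $\textnormal{Ad}(H)$-invariant inner product from $\mathfrak{h}$ to $\mathfrak{g}$ exhibits $\mathfrak{g}$ as being of compact type, so $\mathfrak{g}=\mathfrak{k}\oplus\mathfrak{a}$ with trivial $\mathfrak{s}$; this also covers compact $G$ and the torus example above. If $H$ is non-compact, then, $H$ being closed, precompactness of a one-parameter group in $H$ agrees with precompactness in $\textnormal{Isom}(M)$, so Proposition~\ref{prop:condition_star} shows that $\kappa|_{\mathfrak{h}}$ satisfies condition~\hyperlink{star}{$(\star)$} relative to $H$, and Theorem~\ref{th:algebraic} yields $\mathfrak{h}=\mathfrak{k}_H\oplus\mathfrak{a}_H\oplus\mathfrak{s}_H$ of the asserted type.

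It remains to transport the decomposition from $\mathfrak{h}$ to $\mathfrak{g}$. As $G$ is connected and dense in $H$, the closed subgroup of elements stabilizing $\mathfrak{g}$ contains $H$, so $\textnormal{Ad}(H)$ preserves $\mathfrak{g}$ and $\mathfrak{g}$ is an ideal of $\mathfrak{h}$; since $\textnormal{ad}(\mathfrak{g})$ then acts trivially on $\mathfrak{h}/\mathfrak{g}$, density forces $[\mathfrak{h},\mathfrak{h}]\subseteq\mathfrak{g}$ and $\mathfrak{h}/\mathfrak{g}$ abelian. Writing $[\mathfrak{h},\mathfrak{h}]=\mathfrak{k}_H\oplus[\mathfrak{s}_H,\mathfrak{s}_H]$, the ideal $\mathfrak{g}$ contains the semisimple part and the derived algebra of $\mathfrak{s}_H$ and differs from $\mathfrak{h}$ only inside the abelian quotient $\mathfrak{a}_H\oplus(\mathfrak{s}_H/[\mathfrak{s}_H,\mathfrak{s}_H])$. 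A case analysis on the type of $\mathfrak{s}_H$ then rewrites $\mathfrak{g}$ as a sum $\mathfrak{k}\oplus\mathfrak{a}\oplus\mathfrak{s}$ of the prescribed form, and I expect this descent to be the main obstacle. The delicate type is $\mathfrak{s}_H\cong\mathfrak{he}_d$: here $\mathfrak{g}$ may meet the $2d$-dimensional symplectic quotient in a proper subspace $W$ on which $\omega$ degenerates, so that $\mathfrak{g}$ is not itself a Heisenberg algebra but splits as a smaller Heisenberg algebra $\mathfrak{he}_{d'}$ (from the non-degenerate part of $W$) together with a central abelian factor (the radical of $\omega|_W$) that is absorbed into $\mathfrak{a}$. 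For the remaining types the extra quotient direction is one-dimensional (for $\mathfrak{aff}(\mathds{R})$ and $\mathfrak{he}_d^\lambda$) or absent (for the perfect $\mathfrak{sl}_2(\mathds{R})$), so $\mathfrak{g}$ is forced to equal either $[\mathfrak{s}_H,\mathfrak{s}_H]$ or all of $\mathfrak{s}_H$, both already on the list.

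For the final assertions, assuming $G\leq\textnormal{Isom}(M)$, I would read parts~(iv) and~(v) of Theorem~\ref{th:algebraic} off $H$. When the summand $\mathfrak{s}$ of $\mathfrak{g}$ is of type $\mathfrak{sl}_2(\mathds{R})$ or $\mathfrak{he}_d^\lambda$, one can adapt the decomposition of $\mathfrak{h}$ so that $\mathfrak{s}=\mathfrak{s}_H$: for $\mathfrak{sl}_2(\mathds{R})$ this is automatic, since it is perfect and lies in $[\mathfrak{h},\mathfrak{h}]\subseteq\mathfrak{g}$; for $\mathfrak{he}_d^\lambda$ one absorbs into the rotation generator $T$ the central, torus-generating $\mathfrak{a}_H$-component it may carry in $\mathfrak{g}$, which yields a valid decomposition of the same type with $T\in\mathfrak{g}$. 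Then $S=\langle\mathfrak{s}_H\rangle$, and since $H$ is closed, closedness of $S$ in $H$ and in $\textnormal{Isom}(M)$ coincide, so parts~(iv) and~(v) give precisely the stated dichotomy: $S\cong\textnormal{He}_d^\lambda$ or $\overline{\textnormal{He}_d^\lambda}$ according as $S$ is closed or not, and $S\cong\textnormal{PSL}_k(2,\mathds{R})$ if and only if it is closed.
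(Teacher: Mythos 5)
Your argument is correct and follows exactly the route the paper indicates (Proposition~\ref{prop:condition_star} makes Theorem~\ref{th:algebraic} applicable to closed connected subgroups of $\textnormal{Isom}(M)$); the paper defers the remaining details to its references, and your passage to the identity component of the closure $\overline{G}$ together with the descent of the decomposition to the dense ideal $\mathfrak{g}$ supplies precisely that missing reduction. Your torus example correctly pinpoints why condition~$(\star)$ cannot be verified for a non-closed $G$ directly, and the case analysis in the descent (including the possible degeneration of $\mathfrak{he}_d^\lambda$ to $\mathfrak{he}_d$, and of $\mathfrak{he}_d$ to a smaller Heisenberg algebra plus a central abelian factor) is sound, up to the harmless point that in the $\mathfrak{aff}(\mathds{R})$ and $\mathfrak{he}_d^\lambda$ cases the summand $\mathfrak{s}$ of $\mathfrak{g}$ may only be a copy of $\mathfrak{s}_H$ tilted by a central $\mathfrak{a}_H$-component, which still yields a valid decomposition of the same isomorphism type.
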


\begin{remark}
The first part of the theorem was shown independently by Adams and Stuck in \cite[Theorem~11.1]{AdSt97a} and Zeghib in \cite{Ze98a}, where the latter proof applies also in the case of manifolds of finite volume. For the second part, it is not known to the author whether there exist compact Lorentzian manifolds (or of finite volume) such that $\overline{\textnormal{He}_d^\lambda}$ or a finite quotient of $\widetilde{\textnormal{SL}_2(\mathds{R})}$ is a subgroup of $\textnormal{Isom}(M)$.
\end{remark}

The following theorem will be important for the geometric characterization of the investigated manifolds.

\begin{theorem}\label{th:locally_free}
Let $M$ be a compact Lorentzian manifold and $G$ a connected Lie group acting isometrically and locally effectively on $M$. Then, the action of the subgroup $S$ generated by the direct summand $\mathfrak{s}$ in its Lie algebra $\mathfrak{g}=\mathfrak{k}\oplus\mathfrak{a}\oplus\mathfrak{s}$ (cf.~Theorem~\ref{th:algebraic_classification}) is locally free.
\end{theorem}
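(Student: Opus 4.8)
The plan is to establish local freeness in its infinitesimal form: for every $x\in M$ the evaluation map $\mathrm{ev}_x\colon\mathfrak{s}\to T_xM$, $X\mapsto\widetilde X(x)$, should be injective, i.e.\ the isotropy algebra $\mathfrak{s}_x:=\{X\in\mathfrak{s}:\widetilde X(x)=0\}$ should be trivial. This $\mathfrak{s}_x$ is a subalgebra, and since $g$ is $S$-invariant one has $\mathfrak{s}_{s\cdot x}=\mathrm{Ad}(s)\,\mathfrak{s}_x$, so its conjugacy type is constant along orbits. For $X\in\mathfrak{s}_x$ the flow $\exp(tX)$ fixes $x$ and linearizes there to $\exp(tA_X)$ with $A_X:=(\nabla\widetilde X)_x$ skew-symmetric for $g_x$; the map $X\mapsto A_X$ is a Lie algebra homomorphism $\mathfrak{s}_x\to\mathfrak{so}(T_xM,g_x)$, injective because a Killing field with vanishing $1$-jet at a point vanishes identically (and then $X=0$ by local effectiveness). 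I will use two elementary facts about a $g_x$-skew operator $A$ on the Lorentzian space $T_xM$: a nonzero real eigenvalue forces its eigenvector to be null, and if $\mathrm{im}\,A$ is totally isotropic then $A=0$, since $u,v\mapsto g_x(Au,v)$ is alternating, hence of even rank, while a totally isotropic subspace of $T_xM$ is at most one-dimensional.

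The elementary core is a nonvanishing statement for $\kappa$-null directions. Suppose $W\in\mathfrak{s}$ has $\kappa(W,W)=0$ and generates a non-precompact one-parameter group. By the proof of Proposition~\ref{prop:condition_star} a Killing field that is timelike somewhere generates a precompact group; hence $\widetilde W$ is nowhere timelike, so $g(\widetilde W,\widetilde W)\ge0$ everywhere, and with $\int_M g(\widetilde W,\widetilde W)\,d\mu=\kappa(W,W)=0$ this forces $g(\widetilde W,\widetilde W)\equiv0$. Thus $\widetilde W$ is everywhere null or zero; and it cannot vanish at any $x$, for then expanding $f:=g(\widetilde W,\widetilde W)$ at $x$ gives $\mathrm{Hess}\,f_x(v,v)=2\,g_x(A_Wv,A_Wv)$, so $f\equiv0$ makes $\mathrm{im}\,A_W$ totally isotropic, whence $A_W=0$ and $\widetilde W\equiv0$, contradicting $W\ne0$. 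By Theorem~\ref{th:algebraic} this applies to the nilpotent elements of $\mathfrak{sl}_2(\mathds{R})$ and to the $\kappa$-radical directions of $\mathfrak{aff}(\mathds{R})$ and $\mathfrak{he}_d$ (the translation line, resp.\ the center) whenever these generate non-precompact groups.

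Feeding this into condition~\hyperlink{star}{$(\star)$} narrows the case distinction sharply. If $\mathfrak{s}_x$ contains a $W$ generating a non-precompact group, then $(\star)$ applied to $V=\mathds{R}W$ gives $\kappa(W,W)\ge0$, and $\kappa(W,W)=0$ is excluded by the previous paragraph; in particular no nilpotent of $\mathfrak{sl}_2(\mathds{R})$ lies in $\mathfrak{s}_x$, so there $\dim\mathfrak{s}_x\le1$ (every two-dimensional subalgebra of $\mathfrak{sl}_2(\mathds{R})$ is a Borel, hence contains a nilpotent). The only nonzero isotropy lines $\mathds{R}W$ that remain are of two kinds. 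In case (A), $W$ generates a non-precompact group with $\kappa(W,W)>0$ (a hyperbolic line in $\mathfrak{sl}_2(\mathds{R})$, a non-central line in $\mathfrak{he}_d$, or an $X_k$/$Y_k$-type line in $\mathfrak{he}_d^\lambda$); then $A_W$ has a nonzero real eigenvalue---e.g.\ for $W=h$ the relation $[\widetilde h,\widetilde e]=-2\widetilde e$ makes $\widetilde e(x)$ an eigenvector---or a nonzero nilpotent part, so $\exp(tA_W)$ is unbounded in $\mathrm{O}(T_xM,g_x)$. In case (B), $W$ generates a precompact group, so $(\star)$ is silent: the elliptic line in $\mathfrak{sl}_2(\mathds{R})$ (where $\kappa(W,W)<0$) and the $\kappa$-null but compact directions $T,Z$ of $\mathfrak{he}_d^\lambda$ (compact by Theorem~\ref{th:algebraic}~(iv),(v)); then $A_W$ is a rotation generator and $\exp(tA_W)$ is bounded.

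The genuine obstacle is to exclude these surviving lines, where neither $(\star)$ nor the null-field argument applies, and here I expect to need the finite volume of $M$ dynamically rather than any further algebra. In case (A) the flow $\exp(tW)$ fixes $x$ while acting near $x$ through the unbounded $\exp(tA_W)$, which expands one null direction and contracts another; I would play this hyperbolic local behavior off against Poincar\'e recurrence of the measure-preserving flow, in the spirit of the Gau{\ss}-map/F{\"u}rstenberg argument underlying Proposition~\ref{prop:condition_star}, to reach a contradiction. In case (B) the isotropy is a rotation and recurrence is automatic, so instead I would argue that a positive-dimensional isotropy persists on the closed invariant set $\{y:\dim\mathfrak{s}_y\ge1\}$, reducing all orbit dimensions there, and derive a contradiction from the non-compactness of $S$ (the reduced orbits would be immersed copies of non-compact homogeneous spaces such as $\mathrm{SL}_2(\mathds{R})/\mathrm{SO}(2)$ inside the compact $M$). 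This dynamical step, carried out in general by Adams and Stuck, is the hard part and the one I would develop most carefully; it is also the technique reused later for Theorem~\ref{th:homogeneous_reductive}.
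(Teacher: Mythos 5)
Your ``elementary core'' is sound and is in fact one of the two pillars of the actual proof: the statement that a complete Killing field $\widetilde W$ with $g(\widetilde W,\widetilde W)\equiv 0$ cannot vanish anywhere (your Hessian computation, the alternating form $g_x(A_W\cdot,\cdot)$ having even rank, and a totally isotropic subspace of a Lorentzian space being at most one-dimensional) is exactly the first lemma cited in the remark after the theorem, and your derivation of $g(\widetilde W,\widetilde W)\equiv 0$ from $\kappa(W,W)=0$ plus non-precompactness is also how the paper gets there. But there is a genuine gap precisely where you yourself locate it: your cases (A) and (B) --- a hyperbolic line of $\mathfrak{sl}_2(\mathds{R})$ or a non-central line of $\mathfrak{he}_d$ with $\kappa(W,W)>0$, and the precompact directions such as the elliptic line of $\mathfrak{sl}_2(\mathds{R})$ or $T,Z$ in $\mathfrak{he}_d^\lambda$ --- are left to an unexecuted dynamical programme. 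As sketched it would not go through: Poincar\'e recurrence of a measure-preserving flow through a hyperbolic fixed point does not by itself yield a contradiction, and ``a non-compact immersed orbit inside a compact manifold'' is not absurd (an irrational line on a torus is one), so the case (B) plan in particular needs a different idea entirely.

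The missing ingredient is Lemma~\ref{lem:stabilizer}, which the paper states for exactly this purpose: if $Y\in\mathfrak{g}_x$ and $Z=\mathrm{ad}_X^k(Y)$ satisfies $[X,Z]=0$, then $Z\in\mathfrak{g}_y$ for some $y\in M$ (proved softly by flowing $x$ with $\exp(tX)$, rescaling the polynomial $\mathrm{Ad}(\exp(tX))Y$, and using compactness of $M$). This transports any putative isotropy element to a distinguished $\kappa$-null direction in the isotropy algebra of a possibly different point, where your own nonvanishing lemma kills it. Concretely: if $h\in\mathfrak{s}_x$ is hyperbolic in $\mathfrak{sl}_2(\mathds{R})$, then $\mathrm{ad}_e(h)=-2e$ and $[e,-2e]=0$, so $e\in\mathfrak{s}_y$ for some $y$, impossible since $\widetilde e$ is lightlike and hence nowhere zero; if $e-f\in\mathfrak{s}_x$ is elliptic, $\mathrm{ad}_e^2(e-f)=2e$ does the same job. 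In $\mathfrak{he}_d$ a non-central $W\in\mathfrak{s}_x$ gives $\mathrm{ad}_{W'}(W)=\omega(W',W)Z$ with $[W',Z]=0$, and in $\mathfrak{he}_d^\lambda$ one applies $\mathrm{ad}_{X_k}^2$ to land on the centre; in each case the target direction is shown to be everywhere lightlike (for the precompact centre of $\mathfrak{he}_d^\lambda$ one takes a limit of the nowhere-timelike fields $\widetilde Z+b\widetilde X_1$ as $b\to 0$ rather than applying \hyperlink{star}{$(\star)$} directly). So the proof is algebraic throughout, driven by the structure theory of Theorem~\ref{th:algebraic}, and the ``hard dynamical step'' you defer is replaced by this one soft compactness lemma; no recurrence or orbit-dimension argument is needed.
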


\begin{remark}
One can find the proof in \cite[Theorem~11.1]{AdSt97a} or in \cite[Theorem~4]{G11}. The (algebraic) proof uses the structure of the algebra given in Theorem~\ref{th:algebraic_classification} and is based on two lemmas. The first one states that non-trivial lightlike complete Killing vector fields on (connected) Lorentzian manifolds are nowhere vanishing (see \cite[Lemma~6.1]{AdSt97a} or \cite[Lemma~4.3]{G11}). The other one is the following (see \cite[Lemma~6.5]{AdSt97a} or \cite[Lemma~4.4]{G11}).

\begin{lemma}\label{lem:stabilizer}
Let $G$ be a connected Lie group with Lie algebra $\mathfrak{g}$ acting continuously on a compact topological space $M$.

Suppose there are $x \in M$, $X \in \mathfrak{g}, Y \in \mathfrak{g}_x$, and $k \in \mathds{Z}_+$ such that $[X,Z]=0$ for $Z=\textnormal{ad}_X^k(Y)$. Then, there exists $y \in M$ such that $Z \in \mathfrak{g}_y$. Here, $\mathfrak{g}_x$ and $\mathfrak{g}_y$, respectively, denote the Lie algebras of the subgroups of $G$ fixing $x$ and $y$, respectively.
\end{lemma}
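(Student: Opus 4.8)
The plan is to exploit the one-parameter group $g_t:=\exp(tX)$ and the way stabilizer subalgebras transform under it, together with the compactness of $M$. The starting observation is the algebraic consequence of the hypothesis $[X,Z]=0$: since $Z=\textnormal{ad}_X^k(Y)$, we get $\textnormal{ad}_X^{k+1}(Y)=\textnormal{ad}_X(Z)=[X,Z]=0$, so all higher brackets vanish and the exponential series terminates. Hence
\[
\textnormal{Ad}(\exp(tX))\,Y \;=\; e^{t\,\textnormal{ad}_X}Y \;=\; \sum_{n=0}^{k}\frac{t^n}{n!}\,\textnormal{ad}_X^n(Y)
\]
is a polynomial in $t$ of degree at most $k$ whose leading coefficient is $\tfrac{1}{k!}Z$. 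If $Z=0$ the assertion is trivial, since the zero vector lies in every $\mathfrak{g}_y$; so I may assume $Z\neq 0$, in which case the polynomial has degree exactly $k$.

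Next I would use that the stabilizers are permuted by $G$. Recall that $G_x=\{g\in G:g\cdot x=x\}$ is a closed subgroup, hence a Lie subgroup by Cartan's theorem, with Lie algebra $\mathfrak{g}_x=\{W\in\mathfrak{g}:\exp(sW)\cdot x=x\ \text{for all}\ s\in\mathds{R}\}$. Writing $x_t:=g_t\cdot x$, one has $G_{x_t}=g_t\,G_x\,g_t^{-1}$ and therefore $\mathfrak{g}_{x_t}=\textnormal{Ad}(g_t)\,\mathfrak{g}_x$. Since $Y\in\mathfrak{g}_x$ by assumption, it follows that $\textnormal{Ad}(g_t)Y\in\mathfrak{g}_{x_t}$ for every $t$. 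The idea is now to isolate the leading coefficient by rescaling: because each $\mathfrak{g}_{x_t}$ is a \emph{linear} subspace of $\mathfrak{g}$, the vector
\[
W_t \;:=\; \frac{k!}{t^k}\,\textnormal{Ad}(g_t)Y \;=\; Z + O\!\left(\tfrac{1}{t}\right)
\]
again lies in $\mathfrak{g}_{x_t}$, and $W_t\to Z$ as $t\to\infty$ since every lower-order term carries a factor $t^{n-k}$ with $n<k$.

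Finally I would pass to a limit using compactness. Choosing a sequence $t_n\to\infty$ with $x_{t_n}\to y$ for some $y\in M$ (possible as $M$ is compact), I would invoke the closedness of stabilizer subalgebras under such limits: if $W_n\in\mathfrak{g}_{p_n}$ with $W_n\to W$ and $p_n\to p$, then $W\in\mathfrak{g}_p$. This is immediate from continuity, for $W_n\in\mathfrak{g}_{p_n}$ means $\exp(sW_n)\cdot p_n=p_n$ for all $s$, and letting $n\to\infty$ with $s$ fixed gives $\exp(sW)\cdot p=p$, i.e.\ $W\in\mathfrak{g}_p$. Applying this with $W_n=W_{t_n}$ and $p_n=x_{t_n}$ yields $Z\in\mathfrak{g}_y$, as desired. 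The one genuinely clever step is the rescaling by $k!/t^k$, which turns the polynomial curve $t\mapsto\textnormal{Ad}(g_t)Y$ running through the moving family of subalgebras $\mathfrak{g}_{x_t}$ into a convergent curve whose limit is precisely $Z$; everything else—the termination of the series and the continuity argument for the limit—is routine.
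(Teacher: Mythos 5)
Your proof is correct and is essentially the standard argument of Adams--Stuck (\cite[Lemma~6.5]{AdSt97a}), which is what the paper cites rather than reproduces: conjugating the stabilizer along $\exp(tX)$, observing that $\textnormal{Ad}(\exp(tX))Y=e^{t\,\textnormal{ad}_X}Y$ is a polynomial of degree $k$ with leading term $\tfrac{t^k}{k!}Z$, rescaling inside the linear subspaces $\mathfrak{g}_{x_t}$, and passing to a limit point via compactness. The only cosmetic remark is that for a general compact (possibly non-metrizable) space one should extract a convergent subnet rather than a subsequence, but the continuity argument is unchanged.
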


Note that the cases $\mathfrak{s}\cong\mathfrak{he}_d^\lambda$ and $\mathfrak{s}\cong\mathfrak{sl}_2(\mathds{R})$ of Theorem~\ref{th:locally_free} are also done in \cite[Th{\'e}or{\`e}me~g{\'e}om{\'e}trique~1.12]{Ze98a}, but in a different manner. 
\end{remark}

The geometric characterization of compact Lorentzian manifolds with a non-compact connected component of the identity in the isometry group is given by the following theorem.

\begin{theorem}\label{th:geometric_characterization}
Let $M$ be a compact Lorentzian manifold and $G$ a connected closed non-compact Lie subgroup of the isometry group $\textnormal{Isom}(M)$. According to Theorem~\ref{th:algebraic_classification}, its Lie algebra is a direct sum $\mathfrak{g}=\mathfrak{k}\oplus\mathfrak{a}\oplus\mathfrak{s}$ as described in the theorem.
\begin{enumerate}
\item If the induced bilinear form $\kappa$ is positive semidefinite, then $\mathfrak{s}$ is neither isomorphic to $\mathfrak{sl}_2(\mathds{R})$ nor $\mathfrak{he}_d^\lambda$. The orbits of $G$ are nowhere timelike and the kernel of $\kappa$ is either trivial or the span of a lightlike Killing vector field with geodesic orbits.

\item If $\mathfrak{s}\cong\mathfrak{sl}_2(\mathds{R})$, $M$ is covered isometrically by a warped product $N \times_\sigma \widetilde{\textnormal{SL}_2(\mathds{R})}$ of a Riemannian manifold $N=(N,h)$ and the universal cover $\widetilde{\textnormal{SL}_2(\mathds{R})}$ of $\textnormal{SL}_2(\mathds{R})$ furnished with the bi-invariant metric given by the Killing form $k$ of $\mathfrak{sl}_2(\mathds{R})$, that is, $M$ is covered by the manifold $N \times \widetilde{\textnormal{SL}_2(\mathds{R})}$ with the metric $g_{(x,\cdot)}=h_x \times \left(\sigma^2\left(x\right)k\right)$, $\sigma:N \to \mathds{R}_+$ smooth.

Moreover, there is a discrete subgroup $\Gamma$ in $\textnormal{Isom}(N)\times\widetilde{\textnormal{SL}_2(\mathds{R})}$ acting freely on $N \times_\sigma \widetilde{\textnormal{SL}_2(\mathds{R})}$ such that $M\cong\Gamma \backslash\mkern-5mu \left(N\times_\sigma \widetilde{\textnormal{SL}_2(\mathds{R})}\right)$.

\item Let $\mathfrak{s}\cong\mathfrak{he}_d^\lambda$ and $S$ be the subgroup generated by $\mathfrak{s}$. $S$ is isomorphic to $\textnormal{He}_d^\lambda$ or $\overline{\textnormal{He}_d^\lambda}$ and the center $Z(S)$ is isomorphic to $\mathds{S}^1$ or $\mathds{R}$ if $S$ is closed in $\textnormal{Isom}(M)$ or not, respectively.

Consider the space $\mathcal{M}$ of ad-invariant Lorentz forms on $\mathfrak{s}$. Then, there exist a Riemannian manifold $N=(N,h)$ with a locally free and isometric $Z(S)$-action and a smooth map $m: N \to \mathcal{M}$ invariant under the $Z(S)$-action such that $M$ is covered isometrically by the Lorentzian manifold $S \times_{Z(S)} N$, constructed in the following way:

Consider the product $S\times N$ furnished with the metric $g_{(\cdot,x)}=m(x) \times h_x$, where the $S$-factor is provided with the bi-invariant metric defined by $m(x)$. Let $\mathcal{O}$ be the distribution of $N$ orthogonal to the $Z(S)$-orbits and $\widetilde{\mathcal{S}}$ be the distribution of $S$ given by the tangent spaces.

The center $Z(S)$ acts isometrically on $S$ by multiplication of the inverse. The action of a central element $z$ that maps $f$ to $fz^{-1}=z^{-1}f$ corresponds to a translation in the center component. Thus, we have a locally free and isometric action of $Z(S)$ on $S\times N$ by the diagonal action. Factorizing through this action, we obtain the quotient space $S\times_{Z(S)} N$. This is a manifold and we can provide it with the metric given by projection of the induced metric on $\widetilde{\mathcal{S}}\oplus\mathcal{O}$.

Furthermore, there is a discrete subgroup $\Gamma$ in $S\times_{Z(S)}\textnormal{Isom}_{Z(S)}(N)$, such that $M\cong\Gamma \backslash \mkern-5mu  \left(S\times_{Z(S)}N\right)$. Here, the quotient group defined in the same way as the quotient manifold above and $\textnormal{Isom}_{Z(S)}(N)$ is the group of $Z(S)$-equivariant isometries of $N$ acting freely on the manifold $S\times_{Z(S)}N$.
\end{enumerate}
\end{theorem}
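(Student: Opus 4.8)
The plan is to analyse all three cases through the orbit foliation of the subgroup $S$ generated by $\mathfrak{s}$, which acts locally freely by Theorem~\ref{th:locally_free}. Passing to a suitable cover I may assume the $S$-orbits are embedded, so that their tangent spaces form a distribution $\mathcal{D}$ of rank $\dim\mathfrak{s}$, carrying at each $x$ the pointwise form $\beta_x(X,Y):=g_x\bigl(\widetilde{X}(x),\widetilde{Y}(x)\bigr)$ on $\mathfrak{s}$. For part~(i), if $\kappa$ is positive semidefinite then Theorem~\ref{th:algebraic}~(iv),(v) rules out $\mathfrak{s}\cong\mathfrak{sl}_2(\mathds{R})$ and $\mathfrak{s}\cong\mathfrak{he}_d^\lambda$, since in both of those $\kappa|_{\mathfrak{s}}$ is Lorentzian. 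To see that the orbits are nowhere timelike I would show $\beta_x\geq 0$ for every $x$: a timelike orbit-tangent vector would produce a Killing field that is timelike at a point, which by the argument of Proposition~\ref{prop:condition_star} generates a precompact group, and feeding this back into condition~\hyperlink{star}{$(\star)$} together with $\kappa\geq 0$ forces $\beta_x$ to be positive semidefinite. If $\ker\kappa$ is one-dimensional, spanned by $X_0$, then $\int_M\beta_x(X_0,X_0)\,d\mu=0$ with $\beta_x(X_0,X_0)\geq 0$ gives $g(\widetilde{X_0},\widetilde{X_0})\equiv 0$; thus $\widetilde{X_0}$ is lightlike of constant length, and the Killing identity $\nabla_{\widetilde{X_0}}\widetilde{X_0}=-\tfrac{1}{2}\nabla\bigl(g(\widetilde{X_0},\widetilde{X_0})\bigr)=0$ shows its orbits are geodesics.

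For parts~(ii) and~(iii) the decisive point is that each pointwise form $\beta_x$ is bi-invariant. I would obtain this from a F\"urstenberg-type precompactness argument as in Proposition~\ref{prop:condition_star}: the Gauss map $x\mapsto\beta_x$ into the space of symmetric forms on $\mathfrak{s}$ is $S$-equivariant, it carries $S$-orbits to $\textnormal{Ad}(S)$-orbits, and since $M$ has finite volume and $S$ preserves it, $\textnormal{Ad}(S)$ must act precompactly on the support of the pushforward measure. In the $\mathfrak{sl}_2$ case $\textnormal{Ad}(S)\cong\textnormal{SO}(2,1)_0$ is non-compact and acts irreducibly on $\mathfrak{s}$, so a Lorentz form with precompact orbit can only be a fixed point; hence $\beta_x$ is $\textnormal{Ad}(S)$-invariant and therefore a positive multiple $\sigma^2(x)$ of the Killing form $k$. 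In the twisted Heisenberg case the same reasoning applied to the unipotent group $\textnormal{Ad}(\exp\mathfrak{he}_d)$ yields $\textnormal{ad}(\mathfrak{he}_d)$-invariance of $\beta_x$ (a unipotent orbit is precompact only if it is a point), which upgrades to full $\textnormal{ad}(\mathfrak{he}_d^\lambda)$-invariance by Proposition~\ref{prop:lorentz_heisenberg}~(iii); thus $\beta_x\in\mathcal{M}$. In either case the induced metric on each orbit, pulled back to $S$ via the orbit map, is the bi-invariant Lorentzian metric determined by $\beta_x$; in particular the orbits are nondegenerate, so $TM=\mathcal{D}\oplus\mathcal{D}^\perp$ orthogonally with $\mathcal{D}^\perp$ Riemannian.

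Next I would establish that $\mathcal{D}^\perp$ is integrable and that $M$ splits as the advertised product. For sections $U,V$ of $\mathcal{D}^\perp$ and $X\in\mathfrak{s}$ one computes $g([U,V],\widetilde{X})$ using that $\widetilde{X}$ is Killing and that the orbit metric is bi-invariant; the $\textnormal{Ad}$-invariance of $\beta_x$ secured above is exactly what makes this expression vanish, so $\mathcal{D}^\perp$ is integrable with Riemannian leaves $N=(N,h)$. On the leaf space the orbit directions then carry the metric $\sigma^2(x)k$ in the $\mathfrak{sl}_2$ case, producing the warped product $N\times_\sigma\widetilde{\textnormal{SL}_2(\mathds{R})}$, respectively the form $m(x)\in\mathcal{M}$ in the $\mathfrak{he}_d^\lambda$ case. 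In the latter case the center $Z(S)$ acts on $N$ and on the map $m$, and because the orbit through $x$ is only a quotient of $S$ by a discrete subgroup while neighbouring leaves are identified along the $Z(S)$-direction, the correct global model is the twisted product $S\times_{Z(S)}N$ rather than a direct product; here I would verify that $Z(S)$ acts isometrically and locally freely on $N$, that $m$ is $Z(S)$-invariant, and that the projected metric on $\widetilde{\mathcal{S}}\oplus\mathcal{O}$ reproduces $g$.

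Finally I would globalize: the local splitting on the universal cover $\widetilde{M}$ is preserved by the deck transformations and by the isometries, and completeness (from compactness of $M$) lets me identify $\widetilde{M}$ isometrically with $N\times_\sigma\widetilde{\textnormal{SL}_2(\mathds{R})}$, respectively with $S\times_{Z(S)}N$; the fundamental group then appears as a discrete subgroup $\Gamma$ of $\textnormal{Isom}(N)\times\widetilde{\textnormal{SL}_2(\mathds{R})}$, respectively of $S\times_{Z(S)}\textnormal{Isom}_{Z(S)}(N)$, acting freely, which gives the asserted quotient description of $M$. I expect the main obstacle to lie in the third step: proving integrability of $\mathcal{D}^\perp$ and pinning down the exact warped, respectively twisted, metric, and in the Heisenberg case correctly accounting for the twisting by $Z(S)$. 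This is where the bi-invariance established in the second step must be combined carefully with the submersion geometry, and where the formulation genuinely departs from the naive direct-product picture.
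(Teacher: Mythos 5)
Your overall architecture coincides with the proof sketched in the remark following the theorem (and carried out in \cite{G11} and \cite{Ze98a}): local freeness of $S$, Lorentzian character of the orbits, bi-invariance of the pointwise forms $\beta_x$ via a F\"urstenberg-type precompactness argument, involutivity of an orthogonal distribution, Riemannian covering theory, and identification of the deck transformations. But your third step contains a genuine error in case (iii): the distribution $\mathcal{D}^\perp=\mathcal{O}$ orthogonal to the $S$-orbits is \emph{not} integrable in general when $\mathfrak{s}\cong\mathfrak{he}_d^\lambda$; only $\mathcal{O}+\mathcal{Z}$ is, where $\mathcal{Z}$ is tangent to the $Z(S)$-orbits. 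Indeed, Proposition~\ref{prop:special} shows that involutivity of $\mathcal{O}$ is equivalent to $[\mathfrak{p},\mathfrak{p}]_Z=\left\{0\right\}$ (the ``special'' case), in which $M$ splits as a genuine metric product $S\times N^\prime$. Generically, for sections $U,V$ of $\mathcal{O}$ the bracket $[U,V]$ has a nonzero component along the central direction $\widetilde{Z}$, detected by $g([U,V],\widetilde{T})$, and this is not killed by the $\textnormal{ad}$-invariance of $\beta_x$. The leaf $N$ in the theorem is a leaf of $\mathcal{O}+\mathcal{Z}$, furnished with a Riemannian metric making $\mathcal{Z}$ orthogonal to $\mathcal{O}$, and \emph{this} is the source of the twisting by $Z(S)$; your sketch attributes the twisting instead to the orbits being quotients of $S$ by discrete subgroups, which is a different phenomenon and does not repair the false integrability claim.

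Two further points. Your globalization invokes ``completeness (from compactness of $M$)'', but compact Lorentzian manifolds need not be geodesically complete (the Clifton--Pohl torus), so this step is invalid; the paper instead equips $S$ with a right-invariant \emph{Riemannian} metric and $N$ with its Riemannian metric and shows that the canonical maps $N\times S\to M$ and $S\times_{Z(S)}N\to M$, defined through the $S$-action, are coverings by Riemannian covering theory. Finally, in part (i) the inference ``a somewhere-timelike Killing field generates a precompact group, hence $\kappa\geq 0$ forces $\beta_x\geq 0$'' is a non sequitur: condition $(\star)$ constrains $\kappa$ on subspaces of $\mathfrak{g}$, not the pointwise forms $\beta_x$, and precompactness of the one-parameter group generated by a somewhere-timelike Killing field does not by itself contradict positive semidefiniteness of $\kappa$; the nowhere-timelike statement requires a separate argument using the structure of $\mathfrak{g}$ from Theorem~\ref{th:algebraic}. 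The remaining steps (bi-invariance via boundedness of $\textnormal{Ad}$-orbits, with irreducibility in the $\mathfrak{sl}_2$ case and unipotency plus Proposition~\ref{prop:lorentz_heisenberg}~(iii) in the twisted Heisenberg case, and the identification of $\Gamma$) are in line with the paper.
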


\begin{remark}
This theorem is due to Zeghib \cite[Th{\'e}or{\`e}me~g{\'e}om{\'e}trique~1.12 and Th{\'e}or{\`e}me~1.14]{Ze98a} and corresponds to \cite[Theorem~5]{G11}. The main difference is in the third part of the theorem, where the equivalence of $\textnormal{ad}(\mathfrak{he}_d)$-invariance and $\textnormal{ad}(\mathfrak{he}_d^\lambda)$-invariance of Lorentzian scalar products on $\mathfrak{he}_d^\lambda$ as well as the difference in Theorem~\ref{th:algebraic} are taken into account.

The proof of the first part of the theorem uses the structure of the Lie algebra given in Theorem~\ref{th:algebraic} and Proposition~\ref{prop:condition_star}.

For the second two parts, let $S$ be the subgroup generated by $\mathfrak{s}$. We first show algebraically that the orbits $\mathcal{S}$ of the action of $S$ on $M$ are of Lorentzian character everywhere. Denoting by $\mathcal{O}=\mathcal{S}^\perp$ the orthogonal distribution and by $\mathcal{Z}$ the distribution induced by the action of $Z(S)$ in the case $\mathfrak{s}\cong\mathfrak{he}_d^\lambda$, the distributions $\mathcal{O}$ and $\mathcal{O}+\mathcal{Z}$, respectively, are involutive. We take a leaf $N$ and furnish it with a Riemannian metric such that it coincides on $\mathcal{O}$ with the metric of $M$ and $\mathcal{Z}$ is orthogonal to $\mathcal{O}$.

If we furnish $S$ with a right-invariant Riemannian metric, we can use the theory of Riemannian coverings to obtain that $N \times S \to M$ and $S \times_{Z(S)} N \to M$, respectively, are coverings. These maps are canonically defined through the action of $S$ on $M$.

Some calculations show that the deck transformations $\Gamma$ are acting transitively on the fibers and are induced by elements of $\textnormal{Isom}(N) \times S$ or $S \times_{Z(S)} \textnormal{Isom}_{Z(S)}(N)$, respectively. $(\psi_{\gamma},\gamma) \in \textnormal{Isom}(N) \times S$ is acting on $N \times S$ by \[(\psi_\gamma,\gamma)(x,f)\mapsto (\psi_\gamma(x),f\gamma)\] and $[\gamma,\psi_\gamma] \in \Gamma \subset S \times_{Z(S)} \textnormal{Isom}_{Z(S)}(N)$ is acting on $S \times_{Z(S)} N$ by \[[\gamma,\psi_\gamma]([f,x])\mapsto [f\gamma,\psi_\gamma(x)].\]

Pulling back the metric along the orbits $\mathcal{S}$ gives a bi-invariant metric on $S$. In the case $\mathfrak{s}\cong\mathfrak{sl}_2(\mathds{R})$ this follows from algebraic calculations, in the case $\mathfrak{s}\cong\mathfrak{he}_d^\lambda$ the F{\"u}rstenberg lemma plays an important role.
\end{remark}


\section{Main theorems}\label{sec:main_theorems}

In this section, we describe the main theorems of this paper.

\begin{theorem}\label{th:homogeneous_characterization}
Let $M$ be a compact homogeneous Lorentzian manifold. Suppose $\textnormal{Isom}^0(M)$, the connected component of the identity in the isometry group, is not compact. Let $\mathfrak{isom}(M)=\mathfrak{k}\oplus\mathfrak{a}\oplus\mathfrak{s}$ be the decomposition of its Lie algebra according to Theorem~\ref{th:algebraic_classification}.

Then either $\mathfrak{s}\cong\mathfrak{sl}_2(\mathds{R})$ or $\mathfrak{s}\cong\mathfrak{he}_d^\lambda$.
\begin{enumerate}
\item If $\mathfrak{s}\cong\mathfrak{sl}_2(\mathds{R})$, $M$ is covered isometrically by the metric product $N \times \widetilde{\textnormal{SL}_2(\mathds{R})}$ of a compact homogeneous Riemannian manifold $N=(N,h)$ and the universal cover $\widetilde{\textnormal{SL}_2(\mathds{R})}$ of $\textnormal{SL}_2(\mathds{R})$ furnished with the metric given by a positive multiple of the Killing form $k$ of $\mathfrak{sl}_2(\mathds{R})$.

Furthermore, there is a uniform lattice $\Gamma_0$ in $\widetilde{\textnormal{SL}_2(\mathds{R})}$ and a group homomorphism ${\varrho:\Gamma_0 \to \textnormal{Isom}(N)}$ such that $M\cong\Gamma \backslash \mkern-5mu \left(N \times \widetilde{\textnormal{SL}_2(\mathds{R})}\right)$, where $\Gamma$ is the graph of $\varrho$. The centralizer of $\Gamma$ in the group ${\textnormal{Isom}(N \times \widetilde{\textnormal{SL}_2(\mathds{R})})}$ acts transitively on $N \times \widetilde{\textnormal{SL}_2(\mathds{R})}$.

$\textnormal{Isom}^0(M)$ is isomorphic to a central quotient of $C \times \widetilde{\textnormal{SL}_2(\mathds{R})}$, where $C$ is the connected component of the identity in the centralizer of $\varrho(\Gamma_0)$ in $\textnormal{Isom}(N)$. $C$ acts transitively on $N$.

\item Let $\mathfrak{s}\cong\mathfrak{he}_d^\lambda$ and $S$ be the subgroup generated by $\mathfrak{s}$. $S$ is isomorphic to $\textnormal{He}_d^\lambda$ or $\overline{\textnormal{He}_d^\lambda}$ and the center $Z(S)$ is isomorphic to $\mathds{S}^1$ or $\mathds{R}$, if $S$ is closed in $\textnormal{Isom}^0(M)$ or not, respectively.

Then, there exist a compact homogeneous Riemannian manifold $N=(N,h)$ with a locally free and isometric $Z(S)$-action and an ad-invariant Lorentz form $m$ on $\mathfrak{s}$ such that $M$ is covered isometrically by the Lorentzian manifold $S\times_{Z(S)} N$, constructed in the same way as in Theorem~\ref{th:geometric_characterization} but with constant $m$.

Moreover, there is a discrete subgroup $\Gamma$ in $S \times_{Z(S)} \textnormal{Isom}_{Z(S)}(N)$, where the latter group is constructed similarly to the space $S\times_{Z(S)} N$ and $\textnormal{Isom}_{Z(S)}(N)$ denotes the group of $Z(S)$-equivariant isometries of $N$ such that $M$ is isometric to $\Gamma \backslash \mkern-5mu \left(S\times_{Z(S)}N\right)$ and $\Gamma$ projects isomorphically to a uniform lattice $\Gamma_0$ in $S/Z(S)$. Also, the centralizer of $\Gamma$ in $\textnormal{Isom}(S \times_{Z(S)} N)$ acts transitively on $S \times_{Z(S)} N$.

$\textnormal{Isom}^0(M)$ is isomorphic to a central quotient of $S \times_{Z(S)} C$, where $C$ is the connected component of the identity in the centralizer of the projection of $\Gamma$ to $Z(S) \cdot \textnormal{Isom}_{Z(S)}(N)$ in $\textnormal{Isom}_{Z(S)}(N)$. $C$ acts transitively on $N$.
\end{enumerate}
\end{theorem}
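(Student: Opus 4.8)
The plan is to run the whole argument for $G=\textnormal{Isom}^0(M)$. First I would note that $G$ is a connected closed non-compact Lie subgroup of $\textnormal{Isom}(M)$ and that it already acts transitively: each $G$-orbit is open (as $G$ is open in $\textnormal{Isom}(M)$), the orbits partition the connected manifold $M$, so there is exactly one. Thus Theorems~\ref{th:algebraic_classification} and~\ref{th:geometric_characterization} apply to $G$. If $\kappa$ were positive semidefinite, part~(i) of Theorem~\ref{th:geometric_characterization} would make every orbit, and hence all of $M$, nowhere timelike, contradicting the Lorentzian signature; by Theorem~\ref{th:algebraic} this rules out $\mathfrak{s}$ trivial, $\mathfrak{s}\cong\mathfrak{aff}(\mathds{R})$ and $\mathfrak{s}\cong\mathfrak{he}_d$, leaving $\mathfrak{s}\cong\mathfrak{sl}_2(\mathds{R})$ or $\mathfrak{s}\cong\mathfrak{he}_d^\lambda$. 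In either case Theorem~\ref{th:geometric_characterization} supplies the isometric covering $\widehat{M}\to M$ by the warped product $N\times_\sigma\widetilde{\textnormal{SL}_2(\mathds{R})}$, respectively the twisted product $S\times_{Z(S)}N$, and records the action of $S$ and of the deck group $\Gamma$.

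Let $K$ be the subgroup generated by $\mathfrak{k}\oplus\mathfrak{a}$; by Theorem~\ref{th:algebraic}~(iv)--(v) it is compact, and since the three summands are ideals, $K$ centralises $S$. Consequently the lift of $K$ to $\widehat{M}$ commutes with the (fibrewise transitive) lift of $S$, preserves the $S$-orbits --- the fibres $\{x\}\times\widetilde{\textnormal{SL}_2(\mathds{R})}$, respectively the $S$-leaves --- and therefore induces an isometric action on the base $N$; the deck group $\Gamma$ likewise permutes the fibres and acts on $N$ through $\varrho(\Gamma_0)$. As $S$ acts trivially on $N$, transitivity of $G$ on $M$ descends to transitivity of $\langle K,\varrho(\Gamma_0)\rangle$ on $N$, so $N$ is homogeneous, and compactness of $M$ forces $N$ to be compact.

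To see that the warping is trivial, I would use that both $K$ and $\Gamma$ act on $\widehat{M}$ by isometries permuting the fibres. An isometry of $\widehat{M}$ carrying the fibre over $x$, whose induced metric is the bi-invariant $\sigma^2(x)\,k$ with $k$ the Killing form, onto the fibre over $x'$ restricts to an isometry $(\widetilde{\textnormal{SL}_2(\mathds{R})},\sigma^2(x)\,k)\to(\widetilde{\textnormal{SL}_2(\mathds{R})},\sigma^2(x')\,k)$; since the scalar curvature scales inversely with the metric, this forces $\sigma(x)=\sigma(x')$. Hence $\sigma$ is invariant under $\langle K,\varrho(\Gamma_0)\rangle$ and thus constant, which identifies $\widehat{M}$ with the metric product of part~(i). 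In the twisted case the same comparison, together with the equivalence of all ad-invariant Lorentz forms on $\mathfrak{he}_d^\lambda$ (Proposition~\ref{prop:lorentz_heisenberg}~(ii)) and the fact that $K$ and $\Gamma$ fix the relevant form because they commute with $S$, shows that $m$ is constant.

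The remaining and, I expect, hardest step is the precise identification of $\Gamma$ and the computation of $\textnormal{Isom}^0(M)$. Local freeness of the $S$-action (Theorem~\ref{th:locally_free}) identifies the $\widetilde{\textnormal{SL}_2(\mathds{R})}$-component of $\Gamma$ with the discrete stabiliser of an $S$-orbit, so the projection $\Gamma\to\widetilde{\textnormal{SL}_2(\mathds{R})}$ is injective with image a lattice $\Gamma_0$; compactness of $M$ makes $\Gamma_0$ uniform, and injectivity exhibits $\Gamma$ as the graph of a homomorphism $\varrho\colon\Gamma_0\to\textnormal{Isom}(N)$. I would then determine $\textnormal{Isom}^0(\widehat{M})$ by a product (de Rham--type) splitting of the mixed-signature product, obtaining $\textnormal{Isom}^0(N)$ together with the left and right translations of $\widetilde{\textnormal{SL}_2(\mathds{R})}$. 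Because right translation by $\Gamma_0$ commutes precisely with all left translations and with those isometries of $N$ centralising $\varrho(\Gamma_0)$ (the centraliser of the lattice $\Gamma_0$ in $\widetilde{\textnormal{SL}_2(\mathds{R})}$ being its centre), the centraliser of $\Gamma$ is the product of the left-translation group with $Z_{\textnormal{Isom}(N)}(\varrho(\Gamma_0))$. Homogeneity of $M$ is equivalent to this centraliser acting transitively on $\widehat{M}$, hence to $C$ acting transitively on $N$, and passing to the quotient by the subgroup acting trivially on $M$ presents $\textnormal{Isom}^0(M)$ as the asserted central quotient of $C\times\widetilde{\textnormal{SL}_2(\mathds{R})}$. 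The case $\mathfrak{s}\cong\mathfrak{he}_d^\lambda$ runs in parallel, with the left translations replaced by the $S$-action on $S\times_{Z(S)}N$ and $C$ taken inside the group $\textnormal{Isom}_{Z(S)}(N)$ of $Z(S)$-equivariant isometries, the extra bookkeeping of $Z(S)$-equivariance being the main additional complication there.
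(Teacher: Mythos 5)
Your overall skeleton matches the paper's: rule out the positive semidefinite cases via Theorem~\ref{th:geometric_characterization}~(i) and transitivity, invoke parts (ii)/(iii) for the covering, show $N$ is a compact homogeneous Riemannian manifold, kill the warping, and identify $\Gamma$ and $\textnormal{Isom}^0(M)$. But there is a genuine gap at the pivotal step, the compactness and homogeneity of $N$. Your justification ``compactness of $M$ forces $N$ to be compact'' is not valid: $N$ is a leaf of the foliation induced by $\mathcal{O}$ (resp.\ $\mathcal{O}+\mathcal{Z}$), and leaves of foliations of compact manifolds need not be compact or even closed. The paper instead proves compactness by exhibiting a \emph{compact} group acting transitively on $N$: the subgroup $C$ generated by $\mathfrak{c}=\mathfrak{k}\oplus\mathfrak{a}$ (resp.\ $\mathfrak{k}\oplus\mathfrak{a}\oplus\mathfrak{z(s)}$) is compact by Theorem~\ref{th:algebraic}~(iv)--(v), and one shows that the identity component $\widehat{C}^0$ of the leaf-preserving subgroup equals $C$. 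That identification is the hard part: it requires knowing $\mathfrak{h}\subseteq\mathfrak{c}$ for the isotropy algebra (Proposition~\ref{prop:H_compact}, proved via Lemma~\ref{lem:stabilizer} and Theorem~\ref{th:locally_free}) and a local argument comparing nearby leaves. Your proposal never touches the isotropy algebra, so the transitive group you produce on $N$, namely $\langle K,\varrho(\Gamma_0)\rangle$, is not known to be compact, and you cannot conclude either that $N$ is compact or that the connected group $C$ of the statement acts transitively.

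This gap propagates. Your route to discreteness and uniformity of $\Gamma_0$ is circular in the paper's logic: the paper deduces that $P(\Gamma)$ is discrete from Lemma~\ref{lem:compact_kernel_discrete_subgroup}, whose hypothesis (compact kernel $\textnormal{Isom}(N)$, resp.\ $\textnormal{Isom}_{Z(S)}(N)$) requires $N$ compact, which you have not yet established; and injectivity of $\Gamma\to\Gamma_0$ is not because the $\widetilde{\textnormal{SL}_2(\mathds{R})}$-component is ``the stabiliser of an $S$-orbit'' but because an element $(\psi,e)$ of the kernel fixes the point $(x_0,e)$ under the canonical covering and $\Gamma$ acts freely. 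Finally, the appeal to a ``de Rham--type splitting'' of $\textnormal{Isom}^0(N\times\widetilde{\textnormal{SL}_2(\mathds{R})})$ is unjustified in the pseudo-Riemannian setting; the paper avoids it by noting that $C\times S$ (resp.\ $S\times_{Z(S)}C$) and $\textnormal{Isom}^0(M)$ have the same Lie algebra $\mathfrak{k}\oplus\mathfrak{a}\oplus\mathfrak{s}$ and that the canonical homomorphism is therefore surjective with discrete central kernel. Your scalar-curvature argument for the constancy of $\sigma$ is a reasonable alternative to the paper's one-line deduction, but it too presupposes the transitive action of $C$ on $N$.
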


\begin{remark}
This theorem is \cite[Theorem~6]{G11} (an example of the construction in (ii) is also given there). In particular, one can also find this in \cite[Th{\'e}or{\`e}me~1.7]{Ze98a}. The differences are mainly due to the differences in Theorem~\ref{th:geometric_characterization}. Since the statement about the structure of $\textnormal{Isom}^0(M)$ can be only found in \cite{G11} and will be important in the sequel, we will give a proof of this theorem basing on Theorem~\ref{th:geometric_characterization} in Paragraph~\ref{par:proof_homogeneous}. Also, the proof is more detailed than in \cite{Ze98a}.
\end{remark}

The following theorem is \cite[Theorem~7]{G11}. We will give a proof in Paragraph~\ref{par:reductive}.

\begin{theorem}\label{th:homogeneous_reductive}
Let $M$ be a compact homogeneous Lorentzian manifold and denote by $G:=\textnormal{Isom}^0(M)$ the connected component of the identity in the isometry group. $G$ acts transitively on $M$ and the connected component of the isotropy group $H \subseteq G$ of some point $x \in M$ is compact.

Let $\mathfrak{h}\subseteq \mathfrak{g}$ denote the Lie algebras of both Lie groups. Then, $M\cong G/H$ is reductive, that is, there is an $\textnormal{Ad}(H)$-invariant vector space $\mathfrak{m}\subseteq\mathfrak{g}$ (not necessarily a subalgebra) that is complementary to $\mathfrak{h}$.
\end{theorem}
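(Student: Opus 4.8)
\emph{Overview.} The plan is to establish the three assertions—transitivity of $G=\textnormal{Isom}^0(M)$, compactness of $H^0$, and reductivity—in turn, splitting into two regimes. If $G$ is compact the statement is routine (a closed subgroup $H$ of a compact group is compact, and averaging any inner product on $\mathfrak{g}$ over $H$ produces an $\textnormal{Ad}(H)$-invariant complement to $\mathfrak{h}$), so the substance lies in the non-compact case, where Theorem~\ref{th:homogeneous_characterization} forces $\mathfrak{s}\cong\mathfrak{sl}_2(\mathds{R})$ or $\mathfrak{s}\cong\mathfrak{he}_d^\lambda$. For transitivity, note that homogeneity of $M$ makes $\textnormal{Isom}(M)$ act transitively, so for any $x$ the evaluation $\mathfrak{isom}(M)\to T_xM$, $Y\mapsto\widetilde{Y}(x)$, is surjective. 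Since $\mathfrak{isom}(M)$ is also the Lie algebra of $G$, the $G$-orbit of $x$ has tangent space all of $T_xM$ and is therefore open; as the open $G$-orbits partition the connected manifold $M$, there is only one, so $G$ is transitive and $M\cong G/H$ with $\mathfrak{h}=\mathfrak{g}_x$.

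\emph{Compactness of $H^0$.} In the non-compact case I would prove the sharper statement $\mathfrak{h}\subseteq\mathfrak{k}\oplus\mathfrak{a}\oplus\mathfrak{z}(\mathfrak{s})$, mimicking the Adams--Stuck proof of Theorem~\ref{th:locally_free}. Write $Y\in\mathfrak{h}=\mathfrak{g}_x$ as $Y=Y_0+Y_{\mathfrak{s}}$ with $Y_0\in\mathfrak{k}\oplus\mathfrak{a}$ and $Y_{\mathfrak{s}}\in\mathfrak{s}$. Because $\mathfrak{k}\oplus\mathfrak{a}$ centralizes $\mathfrak{s}$ in the decomposition of Theorem~\ref{th:algebraic_classification}, for any $X\in\mathfrak{s}$ one has $\textnormal{ad}_X^k(Y)=\textnormal{ad}_X^k(Y_{\mathfrak{s}})\in\mathfrak{s}$ for $k\ge1$. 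Choosing $X$ among the canonical basis vectors of $\mathfrak{s}$ and iterating $\textnormal{ad}_X$ until the result $Z:=\textnormal{ad}_X^k(Y)$ commutes with $X$, Lemma~\ref{lem:stabilizer} yields a point $y$ with $Z\in\mathfrak{g}_y$; since $Z\in\mathfrak{s}$ and the $S$-action is locally free (Theorem~\ref{th:locally_free}), $\mathfrak{s}\cap\mathfrak{g}_y=0$ forces $Z=0$. A short computation with the $\mathfrak{sl}_2$-relations (resp.\ the relations $[X_k,Y_k]=\lambda_kZ$, $[T,X_k]=\lambda_kY_k$, $[T,Y_k]=-\lambda_kX_k$) shows that these successive vanishings annihilate every non-central component of $Y_{\mathfrak{s}}$, leaving $Y_{\mathfrak{s}}\in\mathfrak{z}(\mathfrak{s})$ (and $Y_{\mathfrak{s}}=0$ when $\mathfrak{s}\cong\mathfrak{sl}_2(\mathds{R})$, where $\mathfrak{z}(\mathfrak{s})=0$). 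Thus $\mathfrak{h}\subseteq\mathfrak{k}\oplus\mathfrak{a}\oplus\mathfrak{z}(\mathfrak{s})$, and by Theorem~\ref{th:algebraic}(iv)--(v) this subalgebra generates the compact group $K\cdot\exp(\mathfrak{a}\oplus\mathfrak{z}(\mathfrak{s}))$; hence $H^0$ is compact.

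\emph{Reductivity.} Here $\kappa$ does the work. In the non-compact case $\kappa$ is, by Theorem~\ref{th:algebraic}, a $\kappa$-orthogonal sum of a positive definite form on $\mathfrak{k}\oplus\mathfrak{a}$ and a non-degenerate form (Lorentzian, resp.\ a multiple of the Killing form) on $\mathfrak{s}$, hence non-degenerate on $\mathfrak{g}$. For $Y=Y_0+\zeta Z\in\mathfrak{h}$ with $Y_0\in\mathfrak{k}\oplus\mathfrak{a}$ and $Z$ spanning $\mathfrak{z}(\mathfrak{s})$, orthogonality of the summands together with $\kappa(Z,Z)=0$ gives $\kappa(Y,Y)=\kappa(Y_0,Y_0)\ge0$, with equality only if $Y=\zeta Z\in\mathfrak{h}\cap\mathfrak{z}(\mathfrak{s})$. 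Since $\mathfrak{z}(\mathfrak{s})\subseteq\mathfrak{s}$ meets $\mathfrak{h}=\mathfrak{g}_x$ trivially by local freeness, $\kappa|_{\mathfrak{h}}$ is positive definite. Consequently $\mathfrak{m}:=\mathfrak{h}^{\perp_\kappa}$ is complementary to $\mathfrak{h}$ (the intersection equals $\ker(\kappa|_{\mathfrak{h}})=0$, and the dimensions match because $\kappa$ is non-degenerate on $\mathfrak{g}$), and as $\kappa$ is $\textnormal{Ad}(G)$-invariant while $\mathfrak{h}$ is $\textnormal{Ad}(H)$-invariant, so is $\mathfrak{m}$. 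This is the asserted reductive decomposition, with $\kappa$ providing it canonically and with full $\textnormal{Ad}(H)$-invariance (not merely $\textnormal{Ad}(H^0)$-invariance), which is exactly why the global form $\kappa$ is preferable to averaging.

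\emph{Main obstacle.} The delicate step is the compactness of $H^0$: one must organize the Adams--Stuck bracketing so that each application of Lemma~\ref{lem:stabilizer} lands an element of $\mathfrak{s}$ inside an isotropy algebra, and then verify case-by-case—for $\mathfrak{sl}_2(\mathds{R})$ and for $\mathfrak{he}_d^\lambda$—that iterating $\textnormal{ad}_X$ over a canonical basis annihilates precisely the non-central part of $Y_{\mathfrak{s}}$. Once $\mathfrak{h}\subseteq\mathfrak{k}\oplus\mathfrak{a}\oplus\mathfrak{z}(\mathfrak{s})$ is secured, both the compactness of $H^0$ and the positive-definiteness of $\kappa|_{\mathfrak{h}}$ underlying reductivity follow with little extra effort.
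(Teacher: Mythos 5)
Your proposal follows essentially the same route as the paper: the compact case via an invariant inner product, and in the non-compact case the containment $\mathfrak{h}\subseteq\mathfrak{k}\oplus\mathfrak{a}\oplus\mathfrak{z}(\mathfrak{s})$ obtained from Lemma~\ref{lem:stabilizer} together with local freeness of the $S$-action (this is the paper's Proposition~\ref{prop:H_compact}), followed by taking the $\kappa$-orthogonal complement of $\mathfrak{h}$, on which $\kappa$ is positive definite. The only caution is that your bracketing step must restrict to elements with nilpotent adjoint ($e$ and $f$ in $\mathfrak{sl}_2(\mathds{R})$, resp.\ the nilradical of $\mathfrak{he}_d^\lambda$, but not $h$ or $T$, whose adjoint iterates never reach a commuting element), which is exactly how the paper argues.
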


In the case that the isometry group $\textnormal{Isom}(M)$ has non-compact connected components, we will describe the local geometry of $M$ in Paragraphs~\ref{par:isotropy} and~\ref{par:curvature}. Two of our results \cite[Theorems~8 and~9]{G11} are the following theorems.

\begin{theorem}\label{th:isotropy_representation}
Let $M=(M,g)$ be a compact homogeneous Lorentzian manifold and let ${G:=\textnormal{Isom}^0(M)}$ denote the connected component of the identity in the isometry group. Let $H$ be the isotropy group in $G$ of some $x \in M$. Clearly, $M \cong G/H$.
\begin{enumerate}
\item Suppose the Lie algebra of $G$ contains a direct summand $\mathfrak{s}$ isomorphic to $\mathfrak{sl}_2(\mathds{R})$. Then, the isotropy representation of $G/H$ allows a decomposition into irreducible invariant subspaces such that $\mathfrak{s}$ appears as an irreducible summand.

\item Suppose the Lie algebra of $G$ contains a direct summand $\mathfrak{s}$ isomorphic to $\mathfrak{he}_d^\lambda$, $\lambda \in \mathds{Z}_+^d$. Then, the isotropy representation of $G/H$ allows a decomposition into weakly irreducible invariant subspaces such that $\mathfrak{s}$ appears as a weakly irreducible summand. $\mathfrak{s}$ is not irreducible and cannot be decomposed into a direct sum of irreducible subspaces.
\end{enumerate}
\end{theorem}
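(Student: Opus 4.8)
The plan is to realise $\mathfrak{s}$ as an $\textnormal{Ad}(H)$-invariant subspace of the isotropy module and to analyse the induced action. By Theorem~\ref{th:homogeneous_reductive} the presentation $M\cong G/H$ is reductive, and by Theorem~\ref{th:locally_free} the subgroup $S$ generated by $\mathfrak{s}$ acts locally freely, so that $\mathfrak{s}\cap\mathfrak{h}=0$. Since $\mathfrak{s}$ is an ideal and $G$ is connected, $\textnormal{Ad}(H)$ preserves $\mathfrak{s}$, whence $\bar{\mathfrak{s}}:=(\mathfrak{s}+\mathfrak{h})/\mathfrak{h}\subseteq\mathfrak{m}\cong\mathfrak{g}/\mathfrak{h}$ is an invariant subspace $H$-isomorphic to $(\mathfrak{s},\textnormal{Ad})$. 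By Theorem~\ref{th:geometric_characterization} the $S$-orbits are everywhere Lorentzian, so $g_x$ restricts to $\bar{\mathfrak{s}}$ as a Lorentz form and $\mathfrak{m}=\bar{\mathfrak{s}}\oplus\bar{\mathfrak{s}}^{\perp}$ with $\bar{\mathfrak{s}}^{\perp}$ positive definite and invariant. As the isotropy preserves this positive definite form, orthogonal complements of invariant subspaces remain invariant, so $\bar{\mathfrak{s}}^{\perp}$ splits into irreducible, hence weakly irreducible, summands. Everything therefore reduces to the isotropy action on $\bar{\mathfrak{s}}$.

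The decisive observation—and what I expect to be the main obstacle—is that this action is governed not by the compact identity component $H^{0}$, which by the structure of $G$ in Theorem~\ref{th:homogeneous_characterization} lies in the centraliser of $\mathfrak{s}$ and so acts trivially, but by the discrete part of $H$. Reading off the action from $H^{0}$ would wrongly suggest that $\mathfrak{s}$ splits into its timelike and spacelike parts; the genuine symmetries are the non-compact discrete ones encoded in the covering group $\Gamma$. Following the covering description of Theorem~\ref{th:homogeneous_characterization}, exactly as in the model case $M=\Gamma_{0}\backslash\widetilde{\textnormal{SL}_2(\mathds{R})}$, the isotropy elements projecting to the uniform lattice $\Gamma_{0}$ act on $\bar{\mathfrak{s}}$ by conjugation, so that the isotropy representation on $\bar{\mathfrak{s}}$ is $\textnormal{Ad}(\Gamma_{0})$ (factoring through $S/Z(S)$ in the twisted Heisenberg case, since $Z(S)$ is central). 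I would make this precise by lifting an isotropy element $\gamma$ to the cover and composing the associated translation with the deck transformation fixing the base point, producing inner conjugation by $\gamma$ and hence $\textnormal{Ad}(\gamma)$ on tangent vectors.

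For part (i), $\Gamma_{0}$ is a uniform lattice in $\widetilde{\textnormal{SL}_2(\mathds{R})}$, so $\textnormal{Ad}(\Gamma_{0})$ is Zariski dense in the adjoint group $\textnormal{PSL}_2(\mathds{R})$ by the Borel density theorem. Consequently every $\textnormal{Ad}(\Gamma_{0})$-invariant subspace is invariant under all inner automorphisms of $\mathfrak{s}$, and since the adjoint representation of $\mathfrak{sl}_2(\mathds{R})$ is irreducible, $\bar{\mathfrak{s}}$ is irreducible. Combined with the complete reducibility of $\bar{\mathfrak{s}}^{\perp}$, this produces a decomposition of the isotropy representation into irreducibles in which $\mathfrak{s}$ occurs as a summand.

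For part (ii) no Borel density is available, since $S/Z(S)\cong\mathds{S}^1\ltimes\mathds{R}^{2d}$ is solvable, and I would argue by an explicit computation. The centre $\mathds{R}Z$ is characteristic and $\textnormal{Ad}$-fixed, hence an invariant degenerate line, so $\mathfrak{s}$ is not irreducible. A uniform lattice $\Gamma_{0}$ meets the translation subgroup in a full-rank lattice, so $\textnormal{Ad}(\Gamma_{0})$ contains the unipotent elements $u_{v}=\exp(\textnormal{ad}_{v})$ for $v$ ranging over a lattice spanning $E=\mathrm{span}(X_{k},Y_{k})$. If $W$ were a proper nondegenerate invariant subspace, an orthogonal splitting $\mathfrak{s}=W_{0}\oplus W_{1}$ would result with $Z\in W_{0}$ Lorentzian and $W_{1}\subseteq Z^{\perp}=\mathfrak{he}_{d}$ positive definite; for $w=cZ+\sum_{j}(a_{j}X_{j}+b_{j}Y_{j})\in W_{1}$ a direct calculation yields $u_{v}(w)-w=\big(\sum_{j}\lambda_{j}(b_{j}p_{j}-a_{j}q_{j})\big)Z\in W_{1}\cap\mathds{R}Z=0$, forcing $a_{j}=b_{j}=0$ and thus $W_{1}\subseteq\mathds{R}Z$; being simultaneously positive definite and null, $W_{1}=0$, a contradiction, so $\mathfrak{s}$ is weakly irreducible. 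The same computation shows that $\mathds{R}Z$ is the unique invariant line. Finally, in any hypothetical decomposition into irreducibles each summand has invariant radical, hence is nondegenerate—excluded by weak irreducibility—or totally isotropic of dimension at most one, that is an invariant line and so $\mathds{R}Z$; as $\dim\mathfrak{s}=2d+2\ge2$ while only one invariant line exists, no such decomposition can exist. Assembling $\mathfrak{m}=\bar{\mathfrak{s}}\oplus\bar{\mathfrak{s}}^{\perp}$ completes the argument.
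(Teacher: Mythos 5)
Your overall strategy is the same as the paper's: split off $\mathfrak{s}$ as an $\textnormal{Ad}(H)$-invariant, nondegenerate (Lorentzian) summand with positive definite invariant complement, observe that $H^0$ centralizes $\mathfrak{s}$ so that everything hinges on isotropy elements realizing $\textnormal{Ad}(\gamma)$ for $\gamma$ in the lattice $\Gamma_0$, and then exploit density of $\Gamma_0$. The paper constructs these isotropy elements explicitly (the maps $\varphi_\gamma$ in Proposition~\ref{prop:heis_isotropy}, obtained by correcting $\gamma$ with an element of the transitive centralizer $C$ so as to fix the base point), which is exactly what you sketch. In part (i) you invoke Borel density, which is the natural tool; in part (ii) the paper instead cites that $\Gamma_0\cap\textnormal{nilrad}(S/Z(S))$ is a lattice in $\mathds{R}^{2d}$ and is Zariski dense there, so that invariant subspaces are $\textnormal{ad}(\mathfrak{he}_d)$-invariant, and then argues with $\textnormal{ad}(\mathfrak{he}_d)$-invariance; your direct use of the unipotent elements $\exp(\textnormal{ad}_v)$ for $v$ in a spanning lattice of $E$ is equivalent.

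There is, however, one concrete unjustified step in your weak-irreducibility argument for part (ii). Given a proper nondegenerate invariant $W$ with orthogonal invariant splitting $\mathfrak{s}=W_0\oplus W_1$, $W_1$ positive definite, you assert $Z\in W_0$ and hence $W_1\subseteq Z^\perp=\mathfrak{he}_d$, and your computation then only treats elements $w=cZ+\sum_j(a_jX_j+b_jY_j)$ of $W_1$. A priori an element of $W_1$ may have a nonzero $T$-component ($W_1$ being positive definite does not force $W_1\subseteq Z^\perp$; a vector $T+X$ with $X\in\mathfrak{he}_d$ can well have positive length). This missing case is exactly the one the paper handles separately (``Otherwise, $T+X\in\mathfrak{i}$\dots''): if $w=T+cZ+\mu\in W_1$, then $u_v(w)-w=-[T,v]+(\textnormal{scalar})\,Z$ lies in $W_1$ for all lattice $v$, and since $\textnormal{ad}_T$ is invertible on $E$ these elements have $E$-components spanning $E$; applying $u_{v'}-\textnormal{id}$ once more produces a nonzero multiple of $Z$ in $W_1$, contradicting positive definiteness. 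With that case added your computation is complete, and the same caveat applies to your claim that $\mathds{R}Z$ is the unique invariant line (there the $T$-component is excluded by the same two-step argument). Everything else — the complete reducibility of the positive definite complement, the radical argument ruling out a decomposition of $\mathfrak{s}$ into irreducibles, and the Borel-density argument in the $\mathfrak{sl}_2$ case — is sound.
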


\begin{theorem}\label{th:homogeneous_not_Ricci_flat}
Let $M$ be a compact homogeneous Lorentzian manifold that is Ricci-flat. Then, the connected components of its isometry group $\textnormal{Isom}(M)$ are compact.
\end{theorem}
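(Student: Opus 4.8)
The plan is to argue by contraposition: assuming $\textnormal{Isom}^0(M)$ is non-compact, I will show that $M$ cannot be Ricci-flat. By Theorem~\ref{th:homogeneous_characterization} the summand $\mathfrak{s}$ is then isomorphic either to $\mathfrak{sl}_2(\mathds{R})$ or to a twisted Heisenberg algebra $\mathfrak{he}_d^\lambda$, and in either case $M$ is covered isometrically by one of the model spaces of that theorem. Since the Ricci tensor is a local isometry invariant, $M$ is Ricci-flat if and only if its cover is, so it suffices to exhibit a non-vanishing Ricci component on the cover. The single computational input I would use throughout is the classical fact that a bi-invariant (possibly indefinite) metric on a Lie group has Levi-Civita connection $\nabla_XY=\tfrac12[X,Y]$ on left-invariant fields, curvature $R(X,Y)Z=-\tfrac14[[X,Y],Z]$, and hence Ricci tensor $\mathrm{Ric}=-\tfrac14 B$, where $B$ denotes the (metric-independent) Killing form. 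In particular the $\mathfrak{s}$-block of the Ricci tensor of such a metric is governed entirely by $B|_{\mathfrak{s}}$.

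For $\mathfrak{s}\cong\mathfrak{sl}_2(\mathds{R})$ the argument is immediate. Here the cover is the \emph{metric} product $N\times\widetilde{\textnormal{SL}_2(\mathds{R})}$, the $\widetilde{\textnormal{SL}_2(\mathds{R})}$-factor carrying the bi-invariant metric of a positive multiple of the Killing form. The Ricci tensor of a metric product splits as the direct sum of the factors' Ricci tensors, so along the directions tangent to $\widetilde{\textnormal{SL}_2(\mathds{R})}$ it equals $-\tfrac14 B_{\mathfrak{sl}_2(\mathds{R})}$. As the Killing form of $\mathfrak{sl}_2(\mathds{R})$ is non-degenerate (indeed Lorentzian), this is non-zero, and $M$ is not Ricci-flat.

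For $\mathfrak{s}\cong\mathfrak{he}_d^\lambda$ I would first record the Killing form. A direct computation with the canonical basis $\{T,Z,X_k,Y_k\}$ shows that $\mathrm{ad}_T^2$ acts as $-\lambda_k^2\,\mathrm{id}$ on each plane $\mathrm{span}(X_k,Y_k)$ and annihilates $T$ and $Z$, while every other pairing yields $B=0$; hence the Killing form vanishes except for the single entry $B(T,T)=-2\sum_k\lambda_k^2\neq0$, even though it is degenerate and the centre direction $Z$ is $m$-lightlike. Thus the twisted Heisenberg group $(S,m)$ is itself \emph{not} Ricci-flat, with $\mathrm{Ric}_S(T,T)=\tfrac12\sum_k\lambda_k^2>0$. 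The remaining, and genuinely harder, task is to transport this non-vanishing from the orbit $(S,m)$ to the cover $S\times_{Z(S)}N$ (which by Theorem~\ref{th:homogeneous_characterization} carries \emph{constant} $m$), since the cover is not a metric product and second-fundamental-form corrections coming from the twisting could in principle alter the $\mathfrak{s}$-directed Ricci curvature.

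I would bypass the twisted product and instead compute $\mathrm{Ric}$ from the reductive homogeneous structure of Theorem~\ref{th:homogeneous_reductive}, as carried out in Paragraph~\ref{par:curvature}. Because $\mathfrak{s}$ acts locally freely (Theorem~\ref{th:locally_free}) and $H$ is compact, $\mathfrak{s}\cap\mathfrak{h}=0$, so we may arrange $\mathfrak{s}\subseteq\mathfrak{m}$; moreover $\mathfrak{s}$ is an ideal that is $\kappa$-orthogonal to $\mathfrak{k}\oplus\mathfrak{a}$ with $[\mathfrak{s},\mathfrak{k}\oplus\mathfrak{a}]=0$, and $\mathrm{ad}(\mathfrak{h})$ is skew with respect to the metric and preserves $\mathfrak{s}$. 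Feeding this into the Nomizu formulas, the curvature terms mixing $\mathfrak{s}$ with $\mathfrak{k}\oplus\mathfrak{a}$ and with $\mathfrak{h}$, together with the relevant $U$-tensor contributions, should drop out, so that the $\mathfrak{s}\times\mathfrak{s}$ block of $\mathrm{Ric}_M$ reduces to $-\tfrac14 B|_{\mathfrak{s}}$ and in particular $\mathrm{Ric}_M(T,T)=\tfrac12\sum_k\lambda_k^2>0$. The main obstacle is precisely this reduction: one must verify that nothing in the trace over the non-$\mathfrak{s}$ directions, nor the homogeneous $U$-tensor, contaminates or cancels the Killing-form contribution in the $T$-direction. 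Once established, the conclusion is immediate, and it clarifies why Ricci-flat non-flat examples such as the Cahen-Wallach spaces are forced to have compact isometry components.
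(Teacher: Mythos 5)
Your overall strategy is the paper's: argue by contraposition, use Theorem~\ref{th:homogeneous_characterization} to reduce to the two cases $\mathfrak{s}\cong\mathfrak{sl}_2(\mathds{R})$ and $\mathfrak{s}\cong\mathfrak{he}_d^\lambda$, and exhibit a non-vanishing Ricci component. The $\mathfrak{sl}_2(\mathds{R})$ case is complete and is exactly what the paper does (metric product, Ricci splits, $\mathrm{Ric}=-\tfrac14 B$ on the bi-invariant factor, $B_{\mathfrak{sl}_2}$ non-degenerate). Your computation of the Killing form of $\mathfrak{he}_d^\lambda$ and of $\mathrm{Ric}^S(T,T)=\tfrac12\sum_k\lambda_k^2$ is also correct and agrees with the paper.

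The gap is exactly where you flag it: in the twisted Heisenberg case the reduction of $\mathrm{Ric}_M$ on $\mathfrak{s}\times\mathfrak{s}$ is asserted (``should drop out'') but not carried out, and this reduction is the entire content of the proof --- it is what Paragraph~\ref{par:curvature} and Proposition~\ref{prop:Ric} supply. Worse, the verification target you set yourself is not achievable: it is \emph{not} true that the non-$\mathfrak{s}$ directions contribute nothing and that the block reduces to $-\tfrac14 B|_{\mathfrak{s}}$. The paper's formula gives, for $X,Y\in\mathfrak{he}_d$,
\begin{equation*}
\textnormal{Ric}(T+X,T+Y)=\frac{1}{2}\sum_{k=1}^d\lambda_k^2+\frac{1}{4}\sum_{j,k=1}^m\bigl(\langle T,[W_k,W_j]_Z\rangle\bigr)^2,
\end{equation*}
so the trace over an orthonormal basis $\{W_j\}$ of $\mathfrak{p}$ genuinely contaminates the $T$-direction whenever $[\mathfrak{p},\mathfrak{p}]_Z\neq\{0\}$ (i.e.\ whenever $M$ is not ``special'' in the sense of Proposition~\ref{prop:special}). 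What saves the argument is not cancellation but sign: the surviving correction is a sum of squares, hence non-negative, and the total stays strictly positive. So you would need to actually run the Nomizu/reductive computation (using the specific decomposition $\mathfrak{m}=\mathfrak{s}\oplus\mathfrak{p}$ of Paragraph~\ref{par:reductive_Heisenberg}, with $\langle T,T\rangle=\langle Z,Z\rangle=0$, $\langle T,Z\rangle=1$) far enough to identify which cross terms survive and to check their signs; proving that they all vanish, as your plan proposes, would fail.
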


\begin{corollary}\label{cor:Ricci_flat}
Let $M$ be a compact homogeneous Lorentzian manifold that is Ricci-flat, but not flat. Then, its isometry group $\textnormal{Isom}(M)$ is compact.
\end{corollary}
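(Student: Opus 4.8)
The plan is to use Theorem~\ref{th:homogeneous_not_Ricci_flat} to reduce the statement to a question about the number of connected components of the isometry group, and then to close this gap with the external results of \cite{PiZe10} and \cite{RoSa96}. Since $M$ is Ricci-flat, Theorem~\ref{th:homogeneous_not_Ricci_flat} tells us that every connected component of $\textnormal{Isom}(M)$ is compact; in particular the identity component $G:=\textnormal{Isom}^0(M)$ is a compact Lie group. As $\textnormal{Isom}(M)$ is a Lie group with $G$ as its identity component, the full group $\textnormal{Isom}(M)$ is compact if and only if the discrete quotient $\textnormal{Isom}(M)/G$ is finite. Hence the whole difficulty is concentrated in showing that $M$, being non-flat, admits only finitely many isometry components.

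First I would record that $G$ already acts transitively on $M$. Indeed, the orbit of $G$ through any $x\in M$ is connected, and it has the same dimension as the orbit of the full group $\textnormal{Isom}(M)$ (their isotropy subgroups share the same identity component), which is all of $M$; thus the $G$-orbits are open, and since $M$ is connected there is only one of them. Consequently $M\cong G/H$ with $G$ and $H$ compact, so by averaging $M$ carries a $G$-invariant Riemannian metric $g_+$ in addition to its Lorentzian metric $g$. At a point $x$ the isotropy group $H$ preserves both $g_x$ and $(g_+)_x$; simultaneously diagonalizing, $H$ fixes the distinguished $g$-timelike line in $T_xM$, so $M$ possesses a $G$-invariant timelike line field. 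This is precisely the kind of causal rigidity to which the results of \cite{RoSa96} and \cite{PiZe10} apply.

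The final step is to invoke \cite{PiZe10} and \cite{RoSa96}: for a compact homogeneous Lorentzian manifold whose identity component of isometries is compact, these results force the full isometry group to be compact unless $M$ is flat. Combined with the non-flatness hypothesis, this rules out infinitely many components of $\textnormal{Isom}(M)$, and the corollary follows together with Theorem~\ref{th:homogeneous_not_Ricci_flat}.

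I expect the genuine obstacle to be this last step, namely the control of the discrete part $\textnormal{Isom}(M)/\textnormal{Isom}^0(M)$: Theorem~\ref{th:homogeneous_not_Ricci_flat} is blind to it, and excluding infinitely many components is exactly where non-flatness must enter and where the external input of \cite{PiZe10} and \cite{RoSa96} is indispensable. Everything preceding it---the reduction to a component count and the production of an invariant Riemannian metric and an invariant timelike line field---is formal.
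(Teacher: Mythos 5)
Your overall architecture is the same as the paper's: apply Theorem~\ref{th:homogeneous_not_Ricci_flat} to get compact connected components, observe that non-compactness of $\textnormal{Isom}(M)$ would then force infinitely many components, and let the external results of \cite{PiZe10} and \cite{RoSa96} rule this out for non-flat $M$. But the decisive step --- the one you yourself flag as the genuine obstacle --- is not actually carried out, and the statement you attribute to the combination of \cite{PiZe10} and \cite{RoSa96} (``compact identity component plus non-flatness forces the full isometry group to be compact'') is not what those papers prove. The correct chain is: \cite[Corollary~3]{PiZe10} says that a compact Lorentzian manifold whose isometry group has \emph{infinitely many components} and which admits a \emph{somewhere timelike Killing vector field} admits an \emph{everywhere} timelike Killing vector field; homogeneity supplies the somewhere-timelike Killing field (every tangent vector extends to a Killing field); and then \cite[Theorem~3.2]{RoSa96} says that a \emph{Ricci-flat} compact homogeneous Lorentzian manifold with a timelike Killing field is, up to finite cover, a flat torus --- contradicting non-flatness. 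Your paraphrase drops the Ricci-flatness hypothesis entirely, yet it is indispensable: the remark after the corollary exhibits flat compact homogeneous Lorentzian manifolds with non-compact isometry group and compact identity component, and without Ricci-flatness no such rigidity is available.

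A second, related problem is your middle paragraph. The $G$-invariant timelike line field you construct by simultaneously diagonalizing $g$ and an averaged Riemannian metric $g_+$ is a correct construction, but it is a red herring here: an invariant line (or vector) field on a homogeneous space is in general \emph{not} a Killing vector field, so it cannot be fed into \cite[Theorem~3.2]{RoSa96}, which requires a timelike \emph{Killing} field. The timelike Killing field in the paper's proof comes from \cite[Corollary~3]{PiZe10}, whose hypothesis is precisely the infinitude of components that you reduced to in your first paragraph --- that is the place where the component count actually enters the argument, and it is missing from your proposal.
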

\begin{proof}
Assume that $\textnormal{Isom}(M)$ is not compact. It follows from Theorem~\ref{th:homogeneous_not_Ricci_flat} that $\textnormal{Isom}(M)$ has infinitely many connected components. \cite[Corollary~3]{PiZe10} states that any compact Lorentzian manifold whose isometry group has infinitely many connected components possesses an everywhere timelike Killing vector field if it possesses a somewhere timelike Killing vector field. Since any homogeneous Lorentzian manifold has a somewhere timelike Killing vector field, it follows that there exists an everywhere timelike Killing vector field on $M$. \cite[Theorem~3.2]{RoSa96} yields that any Ricci-flat compact homogeneous Lorentzian manifold admitting a timelike Killing vector field is isometric to a flat torus (up to a finite covering).
\end{proof}
\begin{remark}
Note that in \cite[Paragraph~1]{PiZe10} examples of flat compact homogeneous Lorentzian manifolds are given whose isometry groups are not compact. These manifolds are flat tori provided with the metric defined by certain quadratic forms of Minkowski space.
\end{remark}


\section{Compact homogeneous Lorentzian manifolds}\label{sec:homogeneous}

In the following, we will investigate compact homogeneous Lorentzian manifolds, in particular those whose isometry groups have non-compact connected components. We use the geometric description of Theorem~\ref{th:geometric_characterization} to prove Theorem~\ref{th:homogeneous_characterization} in Paragraph~\ref{par:proof_homogeneous}. 

We continue in Paragraph~\ref{par:reductive} with giving a reductive representation for any compact homogeneous Lorentzian manifold. In the case that the connected component of the identity in the isometry group is compact, the statement can be shown in the same way as in the Riemannian case. For the more interesting case of non-compact connected components of the isometry group, the induced bilinear form $\kappa$ yields a reductive representation. Additionally, we show that the isotropy group of a point has compact connected components.

In the case that the isometry group contains a twisted Heisenberg group as a subgroup, we give a different reductive representation in Paragraph~\ref{par:reductive_Heisenberg} that is more convenient with respect to calculation purposes. This representation allows us to determine the local geometry of compact homogeneous Lorentzian manifolds whose isometry groups have non-compact connected components. We will investigate the isotropy representation in Paragraph~\ref{par:isotropy} and the curvatures in Paragraph~\ref{par:curvature}.


\subsection{Structure of homogeneous manifolds}\label{par:proof_homogeneous}

Let $M=(M,g)$ be a compact homogeneous Lorentzian manifold. We suppose that $\textnormal{Isom}^0(M)$, the identity component of the isometry group, is not compact. Additionally, let $\mathfrak{isom}(M)=\mathfrak{k}\oplus\mathfrak{a}\oplus\mathfrak{s}$ be the decomposition of its Lie algebra according to Theorem~\ref{th:algebraic_classification}. Due to Proposition~\ref{prop:condition_star} and Theorem~\ref{th:algebraic}, this decomposition is $\kappa$-orthogonal, where $\kappa$ is the induced bilinear form on $\mathfrak{isom}(M)$.

In a homogeneous semi-Riemannian manifold, each tangent vector can be extended to a Killing vector field (cf.~\cite[Corollary~9.38]{ON83}). Thus, the orbits of the isometry group on the Lorentzian manifold $M$ have Lorentzian character. It follows from Theorem~\ref{th:geometric_characterization} that $\mathfrak{s}\cong\mathfrak{sl}_2(\mathds{R})$ or $\mathfrak{s}\cong\mathfrak{he}_d^\lambda$.

Let $S$ be the subgroup generated by $\mathfrak{s}$ and denote by $\mathcal{S}$ the distribution defined by the orbits of $S$. We can apply Theorem~\ref{th:geometric_characterization}~(ii) and~(iii) to see that $M$ is isometric to $\Gamma \backslash \mkern-5mu \left(N \times_\sigma \widetilde{\textnormal{SL}_2(\mathds{R})}\right)$ or $\Gamma \backslash \mkern-5mu \left(S \times_{Z(S)} N\right)$, respectively. $N$ is a Riemannian manifold and $\Gamma$ is a discrete subgroup in ${\textnormal{Isom}(N)\times\widetilde{\textnormal{SL}_2(\mathds{R})}}$ or ${S\times_{Z(S)}\textnormal{Isom}_{Z(S)}(N)}$, respectively.

It is clear from the proof of Theorem~\ref{th:geometric_characterization}~(ii) and~(iii) (see the remark following that theorem) that $N$ is a leaf of the involutive distribution $\mathcal{O}$ ($\mathfrak{s}\cong\mathfrak{sl}_2(\mathds{R})$) or $\mathcal{O}+\mathcal{Z}$ ($\mathfrak{s}\cong\mathfrak{he}_d^\lambda$), respectively. $\mathcal{O}$ is the distribution orthogonal to $\mathcal{S}$, and $\mathcal{Z}$ denotes the orbit of the center of $S$ if $S$ is a twisted Heisenberg group.

Let $H$ be the isotropy group of a point $x_0 \in N \subset M$ in $\textnormal{Isom}^0(M)$. Denote by $\mathfrak{h}$ its Lie algebra.

\begin{proposition}\label{prop:H_compact}
In the situation of this paragraph, the following are true:
\begin{enumerate}
\item Assume that $\mathfrak{s}\cong\mathfrak{sl}_2(\mathds{R})$. Then, $\mathfrak{h} \subseteq \mathfrak{k}\oplus\mathfrak{a}$.

\item Assume that $\mathfrak{s}\cong\mathfrak{he}_d^\lambda$. Then, $\mathfrak{h} \subseteq \mathfrak{k}\oplus\mathfrak{a}\oplus\mathfrak{z(s)}$.
\end{enumerate}
In both cases, it follows that the identity component of $H$ is compact.
\end{proposition}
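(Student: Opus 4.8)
The plan is to prove both inclusions by contradiction, recycling the mechanism behind Theorem~\ref{th:locally_free}: I would combine local freeness of the $S$-action with Lemma~\ref{lem:stabilizer}. Local freeness means every stabilizer $S_y$ is discrete, so its Lie algebra $\mathfrak{s}\cap\mathfrak{g}_y$ vanishes for every $y\in M$, where $\mathfrak{g}_y=\{W\in\mathfrak{g}:\widetilde{W}(y)=0\}$ is the isotropy subalgebra. Since $\mathfrak{h}=\mathfrak{g}_{x_0}$, it suffices to control the projection $\pi_{\mathfrak{s}}\colon\mathfrak{h}\to\mathfrak{s}$: in case~(i) I must show $\pi_{\mathfrak{s}}(\mathfrak{h})=0$, in case~(ii) that $\pi_{\mathfrak{s}}(\mathfrak{h})\subseteq\mathfrak{z(s)}$.

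The key reduction uses that $\mathfrak{g}=\mathfrak{k}\oplus\mathfrak{a}\oplus\mathfrak{s}$ is a direct sum of ideals, so $[\mathfrak{s},\mathfrak{k}\oplus\mathfrak{a}]=0$. Writing $X=X_0+X_s\in\mathfrak{h}$ with $X_0\in\mathfrak{k}\oplus\mathfrak{a}$ and $X_s=\pi_{\mathfrak{s}}(X)$, for any $W\in\mathfrak{s}$ one then has $\textnormal{ad}_W^k(X)=\textnormal{ad}_W^k(X_s)\in\mathfrak{s}$ for all $k\geq1$, because $\textnormal{ad}_W$ annihilates $X_0$. If I can choose $W\in\mathfrak{s}$ and $k\in\mathds{Z}_+$ so that $Z:=\textnormal{ad}_W^k(X_s)$ is nonzero and commutes with $W$, then Lemma~\ref{lem:stabilizer}, applied with $x=x_0$, $Y=X\in\mathfrak{g}_{x_0}$, and the lemma's ``$X$'' taken to be $W$, yields a point $y$ with $0\neq Z\in\mathfrak{s}\cap\mathfrak{g}_y$, contradicting local freeness. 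Thus everything reduces to the algebraic task of producing such a $W$ whenever $X_s$ lies outside the allowed subspace.

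For this step I would compute $\textnormal{ad}$ explicitly. In case~(i), for a nonzero $X_s=\alpha e+\beta f+\gamma h\in\mathfrak{sl}_2(\mathds{R})$ one finds $\textnormal{ad}_e^2(X_s)=-2\beta e$, $\textnormal{ad}_f^2(X_s)=-2\alpha f$, and $\textnormal{ad}_e(\gamma h)=-2\gamma e$; since the centralizer of a nonzero nilpotent $W$ is $\mathds{R}W$, at least one of $W=e$ or $W=f$ (with $k=1$ or $2$) lands in $\mathds{R}W\setminus\{0\}$, furnishing the required $Z$. In case~(ii), for $X_s=tT+zZ+\sum_k(a_kX_k+b_kY_k)\notin\mathds{R}Z$ I would exploit that $Z$ is central: if $t\neq0$ then $\textnormal{ad}_{X_1}^2(X_s)=-t\lambda_1^2 Z\neq0$, while if $t=0$ but some $a_k\neq0$ (resp.\ $b_k\neq0$) then $\textnormal{ad}_{Y_k}(X_s)=-a_k\lambda_k Z$ (resp.\ $\textnormal{ad}_{X_k}(X_s)=b_k\lambda_k Z$) is a nonzero multiple of $Z$, which automatically commutes with $W$. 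In every sub-case this produces a contradictory $0\neq Z\in\mathds{R}Z\setminus\{0\}\subseteq\mathfrak{s}$, giving $\pi_{\mathfrak{s}}(\mathfrak{h})\subseteq\mathfrak{z(s)}$.

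Finally, compactness of $H^0$ follows formally. The inclusions just proved put $\mathfrak{h}$ inside $\mathfrak{k}\oplus\mathfrak{a}$ (case~i) or $\mathfrak{k}\oplus\mathfrak{a}\oplus\mathfrak{z(s)}$ (case~ii). The connected subgroup $K$ with Lie algebra the compact semisimple $\mathfrak{k}$ is compact, being the continuous image of the compact simply connected group, and Theorem~\ref{th:algebraic}~(v), resp.\ (iv), guarantees that the abelian subgroup generated by $\mathfrak{a}$, resp.\ $\mathfrak{a}\oplus\mathfrak{z(s)}$, is compact; as these commute with $\mathfrak{k}$, the subgroup $L$ they generate is compact. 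Then $H^0\subseteq L$ is a closed subgroup of a compact group, hence compact. I expect the only genuinely delicate point to be the explicit construction of $W$ and $k$ in each sub-case, in particular checking that the iterated bracket does not vanish; everything else is structural.
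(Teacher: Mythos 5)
Your proposal is correct and follows essentially the same route as the paper: it combines local freeness of the $S$-action (Theorem~\ref{th:locally_free}) with Lemma~\ref{lem:stabilizer}, using that $\operatorname{ad}_W$ for $W\in\mathfrak{s}$ kills the $\mathfrak{k}\oplus\mathfrak{a}$-component and that the last nonzero iterate of a nilpotent $\operatorname{ad}_W$ commutes with $W$, and then derives compactness of $H^0$ from Theorem~\ref{th:algebraic}~(iv)/(v) exactly as the paper does. The only difference is cosmetic: the paper argues slightly more structurally (Jacobi identity for $h$ in case~(i), an arbitrary element of the nilradical in case~(ii)) where you carry out the explicit basis computations, which all check out.
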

\begin{proof}
(i) Take $Y=A+W \in \mathfrak{h}$ with $A \in \mathfrak{k}\oplus\mathfrak{a}$, $W \in \mathfrak{s}$. If $\left\{e,f,h\right\}$ is an $\mathfrak{sl}_2$-triple of $\mathfrak{s}$ (fulfilling $[h,e]=2e$, $[h,f]=-2f$, $[e,f]=h$), then $\textnormal{ad}_e$ and $\textnormal{ad}_f$ are nilpotent.

Due to Theorem~\ref{th:locally_free}, the subgroup generated by $\mathfrak{s}$ acts locally freely on $M$. Using $[\mathfrak{s},Y]\subseteq \mathfrak{s}$ and Lemma~\ref{lem:stabilizer} with $X=e$ or $X=f$, we obtain $[e,Y]=0$ as well as $[f,Y]=0$. By the Jacobi identity, $[h,Y]=0$. It follows that $W=0$.

Therefore, $\mathfrak{h}\subseteq\mathfrak{k}\oplus\mathfrak{a}$. Theorem~\ref{th:algebraic} together with Proposition~\ref{prop:condition_star} states that the subgroup generated by $\mathfrak{a}$ is compact. The compact semisimple algebra $\mathfrak{k}$ also generates a compact subgroup. Thus, the subgroup generated by $\mathfrak{h}$ is compact as well, since it is closed.

(ii) Take $Y=A+W \in \mathfrak{h}$ with $A \in \mathfrak{k}\oplus\mathfrak{a}$, $W \in \mathfrak{s}$. Let $X$ be an arbitrary element of the nilradical of $\mathfrak{s}$, which is isomorphic to $\mathfrak{he}_d$.

Due to Theorem~\ref{th:locally_free}, the subgroup generated by $\mathfrak{s}$ acts locally freely on $M$. Using $[\mathfrak{s},Y]\subseteq \mathfrak{s}$ and Lemma~\ref{lem:stabilizer}, we obtain $[X,Y]=0$. Since this is true for any $X$ as above, $W$ centralizes the nilradical of $\mathfrak{s}$. It follows that $W \in \mathfrak{z(s)}$ because $\mathfrak{s}$ is a twisted Heisenberg algebra.

Therefore, $\mathfrak{h}\subseteq\mathfrak{k}\oplus\mathfrak{a}\oplus\mathfrak{z(s)}$. Theorem~\ref{th:algebraic} together with Proposition~\ref{prop:condition_star} states that the subgroup generated by $\mathfrak{a}\oplus\mathfrak{z(s)}$ is compact. We can now conclude exactly as in~(i) that $\mathfrak{h}$ generates a compact subgroup.
\end{proof}

\begin{proposition}\label{prop:N_compact}
In the situation of this section, $N$ is a compact homogeneous Riemannian manifold.
\end{proposition}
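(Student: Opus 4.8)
The plan is to exhibit $N$ as the orbit of a single \emph{compact} subgroup of $G=\textnormal{Isom}^0(M)$, which yields compactness and homogeneity simultaneously. Set $\mathfrak{r}:=\mathfrak{k}\oplus\mathfrak{a}$ if $\mathfrak{s}\cong\mathfrak{sl}_2(\mathds{R})$ and $\mathfrak{r}:=\mathfrak{k}\oplus\mathfrak{a}\oplus\mathfrak{z(s)}$ if $\mathfrak{s}\cong\mathfrak{he}_d^\lambda$, and let $R\subseteq G$ be the subgroup generated by $\mathfrak{r}$. First I would check that $R$ is compact: $\mathfrak{k}$ is compact semisimple and hence generates a compact subgroup $K$, while Theorem~\ref{th:algebraic}~(v) (resp.~(iv)) guarantees that the abelian subgroup generated by $\mathfrak{a}$ (resp.\ by $\mathfrak{a}\oplus\mathfrak{z(s)}$) is compact. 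As $\mathfrak{k}$, $\mathfrak{a}$ and $\mathfrak{z(s)}$ commute pairwise (the first two summands are ideals, and $\mathfrak{z(s)}$ is central), $R$ is the product of two commuting compact subgroups and therefore compact.

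Next, since $[\mathfrak{r},\mathfrak{s}]=0$, the group $R$ centralizes $S$. Consequently $R$ maps $S$-orbits to $S$-orbits and, being isometric, preserves the distribution $\mathcal{S}$ tangent to these orbits as well as its orthogonal complement $\mathcal{O}=\mathcal{S}^\perp$; centralizing $Z(S)$ it also preserves $\mathcal{Z}$. Thus $R$ permutes the leaves of $\mathcal{O}$ (resp.\ of $\mathcal{O}+\mathcal{Z}$). Because $R$ is connected and contains the identity, it will fix the leaf $N$ through $x_0$ as soon as $R\cdot x_0\subseteq N$, i.e.\ as soon as the fundamental vector fields $\widetilde{Y}$, $Y\in\mathfrak{r}$, are tangent to $N$ at $x_0$.

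Establishing this tangency is the main obstacle. Equivalently one must show $g(\widetilde{Y},\widetilde{W})(x_0)=0$ for $Y\in\mathfrak{k}\oplus\mathfrak{a}$ and $W\in\mathfrak{s}$ (in the $\mathfrak{he}_d^\lambda$-case the $\mathcal{Z}$-component is allowed, so only the component along $\mathcal{S}\ominus\mathcal{Z}$ must vanish). I would read this off the explicit product structure of the cover provided by Theorem~\ref{th:geometric_characterization}: there the $\widetilde{\textnormal{SL}_2(\mathds{R})}$-factor (resp.\ the $S$-factor) is orthogonal to the $N$-factor and carries the $S$-action. Lifting $R$ to the cover, its image commutes with this fibre action, so it consists of isometries of the warped (resp.\ twisted) product preserving the fibration; as $R$ is compact while the translations of $\widetilde{\textnormal{SL}_2(\mathds{R})}$ (resp.\ the non-compact directions of $S$) admit no nontrivial compact subgroup, the lift of $R$ can only act through the $N$-factor. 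Hence $\widetilde{Y}$ is tangent to the $N$-factor, which is exactly the required orthogonality, and simultaneously $R$ acts on $N$ through $Z(S)$-equivariant isometries of $(N,h)$ --- note that in the $\mathfrak{he}_d^\lambda$-case $h$ is the modified Riemannian metric of Theorem~\ref{th:geometric_characterization}, not the (degenerate) restriction of $g$ to the leaf, so this last point is where care is needed.

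Granting the tangency, $R\cdot x_0\subseteq N$ and $R$ preserves $N$, and it remains to run a dimension count. By Proposition~\ref{prop:H_compact} we have $\mathfrak{h}\subseteq\mathfrak{r}$, so the isotropy subalgebra of the $R$-action at $x_0$ is $\mathfrak{r}\cap\mathfrak{h}=\mathfrak{h}$ and $\dim R\cdot x_0=\dim\mathfrak{r}-\dim\mathfrak{h}$. Since $S$ acts locally freely (Theorem~\ref{th:locally_free}), its orbit has dimension $\dim\mathfrak{s}$, whence $\dim N=\dim M-\dim\mathfrak{s}$ (resp.\ $\dim M-\dim\mathfrak{s}+1$); using $\dim M=\dim\mathfrak{g}-\dim\mathfrak{h}$ together with $\mathfrak{r}\cap\mathfrak{s}=0$ (resp.\ $\mathfrak{r}\cap\mathfrak{s}=\mathfrak{z(s)}$) one finds $\dim R\cdot x_0=\dim N$ in both cases. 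As $R$ is compact, $R\cdot x_0$ is compact, hence closed in $N$; being also open (of full dimension) in the connected leaf $N$, it equals $N$. Therefore $N=R\cdot x_0$ is a compact homogeneous Riemannian manifold, as claimed.
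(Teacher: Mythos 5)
Your overall strategy --- exhibit $N$ directly as the orbit of the compact centralizer $R$ (the paper's $C$) and then match dimensions --- is genuinely different from the paper's, and several pieces of it are sound: the compactness of $R$, the fact that $R$ permutes the leaves of $\mathcal{O}$ (resp.\ $\mathcal{O}+\mathcal{Z}$), and the final dimension count using $\mathfrak{h}\subseteq\mathfrak{r}$ and the local freeness of the $S$-action. But the step you yourself flag as ``the main obstacle'' --- the tangency of the fundamental vector fields of $\mathfrak{r}$ to the leaf $N$ --- is precisely the hard content of the proposition, and your argument for it does not close. The claim that ``the lift of $R$ can only act through the $N$-factor'' rests on two unjustified points. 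First, the lift of $R$ to the cover $N\times_\sigma\widetilde{\textnormal{SL}_2(\mathds{R})}$ (resp.\ $S\times_{Z(S)}N$) is an extension of $R$ by a subgroup of the deck group $\Gamma$ and need not be compact, so you cannot invoke the absence of compact subgroups of $\widetilde{\textnormal{SL}_2(\mathds{R})}$ against it; what you can say is only that the induced action on the space of horizontal leaves is by right translations through a homomorphic image of $\mathfrak{k}\oplus\mathfrak{a}$, and $\mathfrak{sl}_2(\mathds{R})$ does contain one-dimensional subalgebras of compact type whose one-parameter groups in $\widetilde{\textnormal{SL}_2(\mathds{R})}$ are closed copies of $\mathds{R}$. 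Second, the splitting of the lifted isometries into factor-wise maps of the warped (resp.\ twisted) product is itself something to prove, and at this stage of the argument you may not use that $\sigma$ is constant or that $\Gamma$ projects to a lattice --- both are derived only \emph{after} this proposition.

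The paper resolves the tangency in the opposite logical order. It first shows that \emph{every} element of $\textnormal{Isom}^0(M)$ maps leaves to leaves (using $\mathfrak{isom}(M)=\mathfrak{c}+\mathfrak{s}$), so that homogeneity of $M$ already forces the leaf-preserving subgroup $\widehat{C}^0$ to act transitively on $N$; this gives homogeneity of $N$ and the inclusion $\widehat{C}^0\subseteq C$ for free. Compactness then follows from the reverse inclusion $C\subseteq\widehat{C}^0$ (equivalent to your tangency claim), which is proved by a local contradiction argument: a small $f\in C$ moving $N$ would move it to a nearby leaf $f'\cdot N$ with $f'\in S$ small (local freeness of $S$), and transitivity of $\widehat{C}^0$ on $N$ together with $\mathfrak{h}\subseteq\mathfrak{c}$ and $\mathfrak{c}\cap\mathfrak{s}=\mathfrak{z(s)}$ forces $f'$ into the subgroup generated by $\mathfrak{z(s)}$, which preserves $N$. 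If you want to keep your route, you would need to substitute an argument of this kind for your covering-space heuristic; as written, the proof has a genuine gap at that point.
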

\begin{proof}

Consider the decomposition $\mathfrak{isom}(M)=\mathfrak{k}\oplus\mathfrak{a}\oplus\mathfrak{s}$, where $\mathfrak{s}$ is isomorphic to $\mathfrak{sl}_2(\mathds{R})$ or $\mathfrak{he}_d^\lambda$, respectively. Let $C$ be the subgroup generated by \[\mathfrak{c}:=\mathfrak{k}\oplus\mathfrak{a} \text{ or } \mathfrak{c}:=\mathfrak{k}\oplus\mathfrak{a}\oplus\mathfrak{z(s)},\] respectively. But $\mathfrak{a}$ and $\mathfrak{a}\oplus\mathfrak{z(s)}$, respectively, generate compact subgroups by Proposition~\ref{prop:condition_star} and Theorem~\ref{th:algebraic}. $\mathfrak{k}$ is compact semisimple and also generates a compact subgroup. Thus, $C$ is compact.

The centralizer of $S$ in $\textnormal{Isom}^0(M)$ is a Lie subgroup whose Lie algebra coincides with the centralizer of $\mathfrak{s}$ in $\mathfrak{isom}(M)$. But the centralizer of $\mathfrak{s}$ is given by $\mathfrak{c}$, therefore, $C$ is contained in the centralizer of $S$.

Let $f \in C$. Using $ff^\prime \cdot x=f^\prime f \cdot x$ for all $x \in M$ and $f^\prime \in S$, we see that $f$ preserves the orbits $\mathcal{S}$ of $S$. But $f$ is an isometry, so $\mathcal{O}=\mathcal{S}^\perp$ is preserved as well. Restricting $f^\prime$ to elements of the center of $S$, we also obtain that $f$ preserves $\mathcal{Z}$ in the case $\mathfrak{s}\cong\mathfrak{he}_d^\lambda$. Thus, for all $f \in C$, $f \cdot N$ is a leaf of the foliation induced by the involutive distribution $\mathcal{O}$ or $\mathcal{O}+\mathcal{Z}$, respectively.

If $f \in S$, then $\mathcal{O}$ is obviously preserved by $f$. Using that $Z(S)$ is central, we see that $f$ preserves $\mathcal{Z}$ in the case $\mathfrak{s}\cong\mathfrak{he}_d^\lambda$ as well: For a generator $Z$ of $Z(S)$, \begin{align*}
df_x(\widetilde{Z}(x))&=df_x(\frac{\partial}{\partial t} \left(\exp(tZ) \cdot x\right)|_{t=0})\\
&=\frac{\partial}{\partial t} \left(f\exp(tZ) \cdot x\right)|_{t=0}\\
&=\frac{\partial}{\partial t}\left( \exp(tZ) \cdot (f\cdot x)\right)|_{t=0}\\
&=\widetilde{Z}(f \cdot x).\end{align*} As above, $f \cdot N$ is a leaf for all $f \in S$.

Because $\mathfrak{isom}(M)=\mathfrak{c}+\mathfrak{s}$, it follows that $f \cdot N$ is a leaf for all $f \in \textnormal{Isom}^0(M)$.

Let $\widehat{C}^0$ be the identity component of the subgroup $\widehat{C}$ in $\textnormal{Isom}^0(M)$ that maps the leaf $N$ to itself. Since $M$ is homogeneous, for any $x,y \in N$ there is $f \in \textnormal{Isom}(M)$ such that $f \cdot x=y$. But $f \cdot N$ is also a leaf, hence $f \cdot N=N$ and $f \in \widehat{C}$. It follows that $N$ is a homogeneous space. Since $N$ is connected, $\widehat{C}^0$ acts transitively on $N$.

Let $\widehat{\mathfrak{c}}$ be the Lie algebra of $\widehat{C}$. By definition, any Killing vector field generated by an element of $\widehat{\mathfrak{c}}$ has to lie in $\mathcal{O}$ or $\mathcal{O}+\mathcal{Z}$, respectively. But $\mathcal{O}=\mathcal{S}^\perp$ and $\mathcal{Z}$ is the orbit of the center of $S$ if it is a twisted Heisenberg group. Thus, $\widehat{\mathfrak{c}}$ is $\kappa$-orthogonal to $\mathfrak{s}$ in the case $\mathfrak{s}\cong\mathfrak{sl}_2(\mathds{R})$ and $\widehat{\mathfrak{c}}\subseteq\mathfrak{o}+\mathfrak{z(s)}$ if $\mathfrak{s}\cong\mathfrak{he}_d^\lambda$, where $\mathfrak{o}$ is the $\kappa$-orthogonal complement of $\mathfrak{s}$ in $\mathfrak{isom}(M)$. It follows that \[\widehat{\mathfrak{c}}\subseteq \mathfrak{c} \text{ and }\widehat{C}^0\subseteq C.\]

We want to show that $\widehat{C}^0= C$. Suppose the contrary. Then for any neighborhood $U$ of the identity in $C$, there is $f \in U$ such that $f \cdot N$ is not equal to $N$.

Let $U^\prime$ be a neighborhood of the identity in $S$. Since the action of $S$ on $M$ is locally free by Theorem~\ref{th:locally_free} and $\mathcal{O}=\mathcal{S}^\perp$, any leaf is locally given by $f^\prime \cdot N$, $f^\prime \in U^\prime$.

If the neighborhood $U$ is small enough, then for any $f \in U$ such that $f \cdot N$ is not equal to $N$, there is $f^\prime \in U^\prime$ such that $f \cdot N=f^\prime \cdot N$.

Since $\widehat{C}^0$ acts transitively on $N$, the map $\widehat{C}^0 \to N$, $\widehat{f} \mapsto \widehat{f} \cdot x_0$, is a locally trivial fiber bundle. So if we fix a neighborhood $\widehat{U}$ of the identity in $\widehat{C}^0$, $\widehat{U} \cdot x_0$ is a neighborhood of $x_0$ in $N$. It follows that if $U^\prime$ and $U$ are sufficiently small, we can choose $f \in U$, $f^\prime \in U^\prime$ such that \[f \cdot N=f^\prime \cdot N\neq N,\] and there is $\widehat{f} \in \widehat{U}$ such that \[\widehat{f}f^{-1}f^\prime \cdot x_0=x_0,\] that is, \[\widehat{f}f^{-1}f^\prime \in H.\] If the neighborhoods $\widehat{U}$, $U^\prime$ and $U$ are sufficiently small, even \[\widehat{f}f^{-1}f^\prime \in H^0,\] where $H^0$ is the identity component in $H$.

By Proposition~\ref{prop:H_compact}, $\mathfrak{h}\subseteq \mathfrak{c}$. Therefore, $H^0 \subseteq C$. Thus, \[\widehat{f}f^{-1}f^\prime \in C.\] Using that $\widehat{f} \in \widehat{C}^0 \subseteq C$ and $f \in U \subset C$, it follows that \[f^\prime \in C.\] If $U$ is small enough, it follows from $\mathfrak{c} \cap \mathfrak{s}=\mathfrak{z(s)}$ that $f^\prime$ is contained in the subgroup generated by $\mathfrak{z(s)}$. But the latter group preserves $N$, contradicting $f^\prime \cdot N\neq N$. Hence, $\widehat{C}^0= C$ and $\widehat{C}^0$ is compact. It follows that $N$ is compact as well.
\end{proof}

Let $\mathfrak{s}\cong\mathfrak{sl}_2(\mathds{R})$. Since $C$ acts transitively on $N$, it follows that $\sigma$ has to be constant. Therefore, $M$ is covered isometrically by the metric product $N \times \widetilde{\textnormal{SL}_2(\mathds{R})}$, where $\widetilde{\textnormal{SL}_2(\mathds{R})}$ is furnished with the metric defined by a positive multiple of the Killing form of $\mathfrak{sl}_2(\mathds{R})$.

Assume that $\mathfrak{s}\cong\mathfrak{he}_d^\lambda$. In the same way as above, we see that the map $m:N \to \mathcal{M}$ is a constant, which we denote now by the ad-invariant Lorentz form $m$.

The proof of the last proposition shows even more:

\begin{corollary}\label{cor:isometry_group}
In the situation of this paragraph, the following are true:
\begin{enumerate}
\item Let $s\cong\mathfrak{sl}_2(\mathds{R})$. Then, $\textnormal{Isom}^0(M)$ is isomorphic to a central quotient of the group $C \times \widetilde{\textnormal{SL}_2(\mathds{R})}$. Here, $C$ is the identity component of the centralizer of the projection of $\Gamma$ to $\textnormal{Isom}(N)$. $C$ acts transitively on $N$.

\item Let $\mathfrak{s}\cong\mathfrak{he}_d^\lambda$. Then, $\textnormal{Isom}^0(M)$ is isomorphic to a central quotient of the group $S \times_{Z(S)} C$. Here, $C$ is the identity component of the centralizer of the projection of $\Gamma$ to $Z(S)\cdot\textnormal{Isom}_{Z(S)}(N)$. $C$ acts transitively on $N$.
\end{enumerate}
\end{corollary}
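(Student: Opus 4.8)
The plan is to read the structure of $\textnormal{Isom}^0(M)$ off from what the proof of Proposition~\ref{prop:N_compact} has already established, combined with the covering description $M\cong\Gamma\backslash\widetilde{M}$ of Theorems~\ref{th:geometric_characterization} and~\ref{th:homogeneous_characterization}, where $\widetilde{M}=N\times\widetilde{\textnormal{SL}_2(\mathds{R})}$ in the first case and $\widetilde{M}=S\times_{Z(S)}N$ in the second. Two points remain to be shown: first, that $\textnormal{Isom}^0(M)$ is a central quotient of $C\times\widetilde{\textnormal{SL}_2(\mathds{R})}$, respectively of $S\times_{Z(S)}C$; and second, that the group $C$ occurring there, namely the identity component $\widehat{C}^0$ of the leaf-stabilizer from Proposition~\ref{prop:N_compact}, is precisely the identity component of the centralizer of the projection of $\Gamma$ to $\textnormal{Isom}(N)$ (which is $\varrho(\Gamma_0)$). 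The transitivity of $C$ on $N$ is already contained in Proposition~\ref{prop:N_compact}, where $\widehat{C}^0=C$ was shown to act transitively on $N$.

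For the first point I would use that the proof of Proposition~\ref{prop:N_compact} shows $\mathfrak{isom}(M)=\mathfrak{c}+\mathfrak{s}$, that $\mathfrak{c}$ is the full centralizer of $\mathfrak{s}$, and hence that the connected subgroups $C$ and $S$ generated by $\mathfrak{c}$ and $\mathfrak{s}$ commute and satisfy $\textnormal{Isom}^0(M)=C\cdot S$. Since $C$ and $S$ commute and are connected, multiplication is a surjective homomorphism $C\times S\to\textnormal{Isom}^0(M)$ with kernel $\{(c,c^{-1}):c\in C\cap S\}$. The intersection $C\cap S$ has Lie algebra $\mathfrak{c}\cap\mathfrak{s}$, which is $0$ when $\mathfrak{s}\cong\mathfrak{sl}_2(\mathds{R})$ and $\mathfrak{z(s)}$ when $\mathfrak{s}\cong\mathfrak{he}_d^\lambda$; in both cases $C\cap S$ is central in $C\cdot S$, because an element of $C\cap S$ commutes with $C$ and with $S$, hence with all of $\textnormal{Isom}^0(M)$. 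Composing with the covering $\widetilde{\textnormal{SL}_2(\mathds{R})}\to S$ (central kernel) this exhibits $\textnormal{Isom}^0(M)$ as a central quotient of $C\times\widetilde{\textnormal{SL}_2(\mathds{R})}$ in the first case, and, after identifying $Z(S)\subseteq C\cap S$ with the diagonal, as a central quotient of $S\times_{Z(S)}C$ in the twisted case, where $S\cong\textnormal{He}_d^\lambda$ or $\overline{\textnormal{He}_d^\lambda}$ by Theorem~\ref{th:homogeneous_characterization}.

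For the second point I would pass to the covering $\widetilde{M}$ and use that $\textnormal{Isom}^0(M)$ is a central quotient of the identity component of the centralizer $Z_{\textnormal{Isom}(\widetilde{M})}(\Gamma)$: each element of $\textnormal{Isom}^0(M)$ lifts (via lifting the generating Killing fields) to a $\Gamma$-normalizing isometry of $\widetilde{M}$, and since $\Gamma$ is discrete the connected set of such lifts centralizes $\Gamma$, while the kernel of the projection is the discrete central subgroup $Z_{\textnormal{Isom}(\widetilde{M})}(\Gamma)^0\cap\Gamma$. Because $N$ is Riemannian and the complementary factor carries the timelike directions, a de Rham-type splitting gives $\textnormal{Isom}^0(\widetilde{M})=\textnormal{Isom}^0(N)\times\textnormal{Isom}^0(\widetilde{\textnormal{SL}_2(\mathds{R})})$, respectively the analogous splitting compatible with the $Z(S)$-reduction for $S\times_{Z(S)}N$, under which $\Gamma$ acts on $N$ through $\varrho(\Gamma_0)$ and on the other factor by right translations along $\Gamma_0$. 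The centralizer of $\Gamma$ then splits accordingly: on the $N$-factor its identity component is by definition $C$, while on the remaining factor the identity component of the centralizer of right translations by the uniform lattice $\Gamma_0$ is exactly the group of left translations, giving $\widetilde{\textnormal{SL}_2(\mathds{R})}$, respectively $S$. Assembling the two factors reproduces $C\times\widetilde{\textnormal{SL}_2(\mathds{R})}$ and $S\times_{Z(S)}C$ and identifies $C$ as claimed.

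The main obstacle is the centralizer computation on the non-Riemannian factor: one must check that a cocompact lattice acting by right translations has, up to identity components, only the left translations (together with central right translations) in its centralizer. For $\widetilde{\textnormal{SL}_2(\mathds{R})}$ this follows from the Zariski density of $\Gamma_0$, so that its centralizer in the group reduces to the center. For the twisted Heisenberg group $S$, which is solvable rather than semisimple, the argument is more delicate and should be run through the cocompactness of $\Gamma_0$ in $S/Z(S)$ together with the explicit description of $\textnormal{Isom}_{Z(S)}(N)$, ensuring that no extra connected isometries survive along the $S$-direction. Pinning down the de Rham-type splitting of $\textnormal{Isom}^0(\widetilde{M})$ in the twisted case, where $\widetilde{M}$ is only a $Z(S)$-quotient of a product and not a genuine metric product, is the remaining technical point.
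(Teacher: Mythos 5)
Your first step --- realizing $\textnormal{Isom}^0(M)$ as a central quotient of $C\times\widetilde{\textnormal{SL}_2(\mathds{R})}$, resp.\ $S\times_{Z(S)}C$, via the multiplication map on the commuting connected subgroups generated by $\mathfrak{c}$ and $\mathfrak{s}$ --- is essentially the paper's argument: the Lie algebras agree, the target is connected, so the canonical homomorphism is surjective with discrete kernel, and the kernel is central because $C$ centralizes $S$. The transitivity of $C$ on $N$ is indeed already contained in Proposition~\ref{prop:N_compact}.

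The gap lies in your identification of $C$ with the identity component of the centralizer of the projection of $\Gamma$. You route this through a de Rham-type splitting $\textnormal{Isom}^0(\widetilde{M})\cong\textnormal{Isom}^0(N)\times\textnormal{Isom}^0(S)$ followed by a factor-by-factor computation of the centralizer of $\Gamma$. Both steps are genuinely problematic and, as you concede, left open: for a Lorentzian product the isometry group need not split along the factors without further argument, for $S\times_{Z(S)}N$ there is no metric product to split in the first place, and the centralizer of a cocompact lattice acting by right translations on the solvable group $S$ is not computed. None of this machinery is needed. The paper argues in the opposite direction, entirely inside $\textnormal{Isom}(M)$: since $\Gamma$ is the graph of $\varrho$ (resp.\ projects isomorphically to $\Gamma_0$), an element $\psi$ of $\textnormal{Isom}(N)$ (resp.\ $\textnormal{Isom}_{Z(S)}(N)$), acting on $\widetilde{M}$ with trivial second component, conjugates $(\varrho(\gamma_0),\gamma_0)$ to $(\psi\varrho(\gamma_0)\psi^{-1},\gamma_0)$ and hence descends to $M\cong\Gamma\backslash\widetilde{M}$ if and only if it commutes with the projection of $\Gamma$; so the isometries of $N$ induced by isometries of $M$ preserving the leaf are exactly this centralizer, whose identity component is the group $\widehat{C}^0=C$ already produced in Proposition~\ref{prop:N_compact}. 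This uses only the descent criterion for the covering and the equality $\widehat{C}^0=C$, and it works uniformly in both cases. To repair your proof you would either have to supply the splitting and the lattice-centralizer computations you flag, or replace that portion by the descent argument above.
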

\begin{proof}
We have seen in the proof of Proposition~\ref{prop:N_compact} that the compact subgroup $C$ generated by $\mathfrak{c}:=\mathfrak{k}\oplus\mathfrak{a}$ or $\mathfrak{c}:=\mathfrak{k}\oplus\mathfrak{a}\oplus\mathfrak{z(s)}$, respectively, preserves the leaf $N$ and acts transitively on $N$. Therefore, we can see $C$ as a subgroup of $\textnormal{Isom}(N)$. In case~(ii), $C$ is even a subgroup of $\textnormal{Isom}_{Z(S)}(N)$, since $\mathfrak{z(s)}$ is central in $\mathfrak{c}$, so any isometry in $C$ commutes with the action of $Z(S)$.

Because of $M \cong \Gamma \backslash \mkern-5mu \left(N \times \widetilde{\textnormal{SL}_2(\mathds{R})}\right)$ or $M \cong \Gamma \backslash \mkern-5mu \left(S \times_{Z(S)} N\right)$, respectively, an element of $\textnormal{Isom}(N)$ or $\textnormal{Isom}_{Z(S)}(N)$, respectively, induces an isometry of $M$ if and only if it commutes with the projection of $\Gamma$ to $\textnormal{Isom}(N)$ or $Z(S)\cdot\textnormal{Isom}_{Z(S)}(N)$, respectively. Note that due to Theorem~\ref{th:geometric_characterization}, $\Gamma$ is a discrete subgroup of the group $\text{Isom}(N) \times \widetilde{\textnormal{SL}_2(\mathds{R})}$ or $S \times_{Z(S)} \text{Isom}_{Z(S)}(N)$, respectively. Here, the projection to $Z(S)\cdot\textnormal{Isom}_{Z(S)}(N)$ is given by \[[f,\psi] \mapsto Z(S) \cdot \psi \subseteq \text{Isom}_{Z(S)}(N),\] $[f,\psi]$ is the equivalence class of $(f,\psi)\in S \times \text{Isom}_{Z(S)}(N)$ in $S \times_{Z(S)} \text{Isom}_{Z(S)}(N)$.

In case~(ii), $Z(S)$ is contained in $S$ and $C$ and acts as the same group of isometries on $M$. Thus, $S \times_{Z(S)} C$ is defined.

The Lie algebra of $C \times S$ or $S \times_{Z(S)} C$, respectively, is isomorphic to the Lie algebra of the isometry group,  $\mathfrak{isom}(M)=\mathfrak{k}\oplus\mathfrak{a}\oplus\mathfrak{s}$.

The action of $C \times S$ or $S \times_{Z(S)} C$, respectively, on $M$ is clearly locally effective. Additionally, $S$ and $C$ are Lie subgroups of $\textnormal{Isom}^0(M)$. Therefore, the canonical Lie group homomorphism $C \times S \to \textnormal{Isom}^0(M)$ or $S \times_{Z(S)} C \to \textnormal{Isom}^0(M)$, respectively, is surjective and the kernel is a discrete central subgroup of $C \times S$ or $S \times_{Z(S)} C$, respectively.

Finally, the covering $\widetilde{\textnormal{SL}_2(\mathds{R})} \to S$ is central in case~(i).
\end{proof}

Consider the canonical projection $P:\textnormal{Isom}(N) \times \widetilde{\textnormal{SL}_2(\mathds{R})} \to \widetilde{\textnormal{SL}_2(\mathds{R})}$ (projection to the second component) or $P:S \times_{Z(S)} \textnormal{Isom}_{Z(S)}(N) \to S/Z(S)$ (projection to the first component), respectively. In both situations, $P$ is a continuous homomorphism that is surjective and has kernel $\textnormal{Isom}(N)$ or kernel isomorphic to $\textnormal{Isom}_{Z(S)}(N)$.

Since $N$ is a compact Riemannian manifold by Proposition~\ref{prop:N_compact}, $\textnormal{Isom}(N)$ is compact. $\textnormal{Isom}_{Z(S)}(N)$ as a closed subgroup of $\textnormal{Isom}(N)$ is compact as well. Thus, in any case, $P$ has compact kernel. In the following lemma, we show that $\Gamma_0:=P(\Gamma)$ is discrete.

\begin{lemma}\label{lem:compact_kernel_discrete_subgroup}
Let $G$ and $G^\prime$ be connected Lie groups and $P:G \to G^\prime$ be a surjective continuous homomorphism with compact kernel. If $\Gamma \subset G$ is discrete, $P(\Gamma) \subset G^\prime$ is discrete as well.
\end{lemma}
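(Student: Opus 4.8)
The plan is to prove that $P$ is a \emph{proper} map and then exploit the fact that a discrete subgroup cannot accumulate. First I would set $K := \ker P$, which is compact by hypothesis, and note that $P$ factors as the quotient map $\pi \colon G \to G/K$ followed by the induced isomorphism $G/K \xrightarrow{\sim} G'$ (here $K$ is closed, being compact, so $G/K$ is a Lie group and $\pi$ is a locally trivial fiber bundle with fiber $K$). The structural fact I want is that $\pi$ is proper because $K$ is compact: given a compact set $C' \subseteq G/K$, I would cover it by finitely many trivializing neighborhoods $U'_1,\ldots,U'_m$ with compact closures, so that $\pi^{-1}(C')$ is a closed subset of $\bigcup_i \pi^{-1}(\overline{U'_i})$, a finite union of sets homeomorphic to $\overline{U'_i}\times K$, hence compact. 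Transporting this through $G/K \cong G'$ shows that $P^{-1}(C')$ is compact whenever $C' \subseteq G'$ is compact.

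Next I would argue by contradiction, assuming $P(\Gamma)$ is not discrete. Then the identity $e'$ of $G'$ fails to be isolated in $P(\Gamma)$, so there is a sequence of pairwise distinct elements $\delta_n = P(\gamma_n)$ with $\gamma_n \in \Gamma$, $\delta_n \neq e'$, and $\delta_n \to e'$. Fixing a compact neighborhood $V'$ of $e'$ in $G'$, for large $n$ we have $\delta_n \in V'$ and hence $\gamma_n \in P^{-1}(V')$, which is compact by properness. Passing to a subsequence, $\gamma_{n_k} \to \gamma$ for some $\gamma \in G$.

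Finally I would invoke that a discrete subgroup of a Hausdorff Lie group is closed, so $\gamma \in \Gamma$; the neighborhood of $\gamma$ isolating it in $\Gamma$ then forces $\gamma_{n_k} = \gamma$ for all large $k$. Consequently $\delta_{n_k} = P(\gamma_{n_k}) = P(\gamma)$ is eventually constant, contradicting the pairwise distinctness of the $\delta_n$. This yields the discreteness of $P(\Gamma)$. The only genuinely non-formal step is establishing properness of $P$, and that is exactly where compactness of the kernel enters; once properness is in hand, the remainder is a routine accumulation-point argument.
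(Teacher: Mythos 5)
Your proof is correct and follows essentially the same route as the paper's: both arguments use the local triviality of $G \to G/\ker P \cong G'$ together with compactness of the kernel to lift a convergent sequence of images back to a convergent (sub)sequence in $\Gamma$, contradicting discreteness. Your packaging of this step as properness of $P$ is only a cosmetic reorganization of the paper's direct lifting argument.
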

\begin{proof}
Assume the contrary. Then, there is a sequence $\left\{f_k\right\}_{k=0}^\infty$ of distinct elements in $\Gamma$ such that $P(f_k)$ converges to $P(f)$ in $G^\prime$ as $k \to \infty$ for some $f \in G$.

Since $G \to G^\prime$ is a locally trivial fiber bundle with fiber $\textnormal{ker} (P)$, there is a sequence $\left\{p_k\right\}_{k=0}^\infty$ in $\textnormal{ker} (P)$ such that $f_k p_k \to f$ converges in $G$ as $k \to \infty$. But $\textnormal{ker} (P)$ is compact, hence, we may assume already that $p_k \to p$ converges. It follows that $f_k \to f p^{-1}$ as $k \to \infty$, contradicting the fact that $\Gamma$ is discrete.
\end{proof}

\begin{lemma}\label{lem:gamma_0_cocompact}
$\Gamma_0$ is cocompact in $\widetilde{\textnormal{SL}_2(\mathds{R})}$ ($\mathfrak{s}\cong\mathfrak{sl}_2(\mathds{R})$) or $S/Z(S)$ ($\mathfrak{s}\cong\mathfrak{he}_d^\lambda$), respectively. Thus, $\Gamma_0$ is a uniform lattice.
\end{lemma}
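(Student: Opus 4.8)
The plan is to exhibit $\widetilde{\textnormal{SL}_2(\mathds{R})}/\Gamma_0$ (respectively $(S/Z(S))/\Gamma_0$) as a continuous image of the compact manifold $M$, so that compactness of the quotient is inherited automatically. Since $\Gamma_0=P(\Gamma)$ is already known to be discrete by Lemma~\ref{lem:compact_kernel_discrete_subgroup}, a discrete cocompact subgroup is by definition a uniform lattice, and the two assertions of the lemma follow at once.

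First I would write down the natural projection from the covering space of $M$ onto the relevant group. If $\mathfrak{s}\cong\mathfrak{sl}_2(\mathds{R})$, let $q:N\times\widetilde{\textnormal{SL}_2(\mathds{R})}\to\widetilde{\textnormal{SL}_2(\mathds{R})}$ be the projection onto the second factor. If $\mathfrak{s}\cong\mathfrak{he}_d^\lambda$, define $q:S\times_{Z(S)}N\to S/Z(S)$ by $q([f,x]):=fZ(S)$; this is well defined because the $Z(S)$-action used to form the twisted product changes the $S$-component only by right multiplication with a central element and hence leaves the coset $fZ(S)$ unchanged. In both cases $q$ is continuous and surjective. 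Next I would verify that $q$ intertwines the $\Gamma$-action on the covering space (given in the remark following Theorem~\ref{th:geometric_characterization}) with the right-translation action of $\Gamma_0$ on the target: for $(\psi_\gamma,\gamma)\in\Gamma$ one computes $q\big((\psi_\gamma,\gamma)\cdot(x,f)\big)=q(\psi_\gamma(x),f\gamma)=f\gamma=q(x,f)\cdot P(\psi_\gamma,\gamma)$, and likewise $q\big([\gamma,\psi_\gamma]\cdot[f,x]\big)=q([f\gamma,\psi_\gamma(x)])=f\gamma\,Z(S)=q([f,x])\cdot P([\gamma,\psi_\gamma])$.

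Consequently, composing $q$ with the orbit map of the right $\Gamma_0$-action produces a $\Gamma$-invariant continuous map on the covering space, which descends through the covering onto $M$ to a continuous map
\[\overline{q}:M\longrightarrow \widetilde{\textnormal{SL}_2(\mathds{R})}/\Gamma_0\qquad\text{resp.}\qquad\overline{q}:M\longrightarrow(S/Z(S))/\Gamma_0.\]
As $q$ is surjective, so is $\overline{q}$. Since $M$ is compact and $\overline{q}$ is continuous and surjective, the target quotient is compact, i.e.\ $\Gamma_0$ is cocompact, and therefore a uniform lattice. There is no serious obstacle in this argument: the only points needing attention are the well-definedness of $q$ in the twisted-product case and its compatibility with $\Gamma$ acting through $P$, both of which are immediate from the explicit formulas for the actions; compactness of $M$ then does all the work.
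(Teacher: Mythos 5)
Your proof is correct and takes essentially the same approach as the paper: both arguments push the compactness of $M$ through the projection of the covering space onto $\widetilde{\textnormal{SL}_2(\mathds{R})}$ resp.\ $S/Z(S)$, using the $\Gamma$-equivariance of that projection. The paper merely packages this upstairs, by extracting a compact set $A\subseteq\widetilde{M}$ with $\Gamma\cdot A=\widetilde{M}$ and projecting it to a compact $B$ with $B\cdot\Gamma_0=S_0$, whereas you descend the projection to a continuous surjection $M\to S_0/\Gamma_0$ directly; the substance is identical.
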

\begin{proof}
Let $\widetilde{M}:=N \times \widetilde{\textnormal{SL}_2(\mathds{R})}$ if $\mathfrak{s}\cong\mathfrak{sl}_2(\mathds{R})$ and $\widetilde{M}:=S \times_{Z(S)} N$ if $\mathfrak{s}\cong\mathfrak{he}_d^\lambda$. Consider the covering map $\pi: \widetilde{M} \to M$ and cover $M$ by evenly covered precompact open sets. Since $M$ is compact, we may choose finitely many of them such that they still cover $M$. For any such precompact open set, choose one sheet in $\widetilde{M}$. The collection of these sheets is precompact and we denote the closure of them by $A$. Because of $M \cong \Gamma \backslash \widetilde{M}$, the deck transformations are acting transitively on the fibers. Hence, $\Gamma \cdot A=\widetilde{M}$.

Let $B$ denote the projection of $A$ under the projection $\widetilde{M} \to S_0$, where $S_0:=\widetilde{\textnormal{SL}_2(\mathds{R})}$ if $\mathfrak{s}\cong\mathfrak{sl}_2(\mathds{R})$ and $S_0:=S/Z(S)$ if $\mathfrak{s}\cong\mathfrak{he}_d^\lambda$. $B$ is compact and $B \cdot\Gamma_0 = S_0$.

We want to show that $S_0/\Gamma_0$ is compact. Equivalently, we have to show that any sequence $\left\{f_k\Gamma_0\right\}_{k=0}^\infty$, $f_k \in S_0$, has a convergent subsequence. For all $k$, choose $b_k \in B$ and $\gamma_k \in \Gamma_0$ such that $b_k \gamma_k=f_k$. $B$ is compact, so we may choose a convergent subsequence $\left\{b_{k_j}\Gamma_0\right\}_{j=0}^\infty$. Therefore, $f_{k_j}\Gamma_0=b_{k_j}\Gamma_0$ converges as $j \to \infty$.
\end{proof}

The kernel of the projection $\Gamma \to \Gamma_0$ is (isomorphic to) a discrete subgroup of $\textnormal{Isom}(N)$ or $\textnormal{Isom}_{Z(S)}(N)$, respectively. Note that the kernel is finite, since the latter groups are compact. Any element of the kernel can be written as $(\psi,e) \in \textnormal{Isom}(N) \times \widetilde{\textnormal{SL}_2(\mathds{R})}$ or $[e,\psi] \in S \times_{Z(S)} \textnormal{Isom}_{Z(S)}(N)$, respectively, $e$ being the identity element of the corresponding group.

Consider now the covering map $\pi: N \times \widetilde{\textnormal{SL}_2(\mathds{R})} \to M$ or $\pi: S \times_{Z(S)} N \to M$, respectively. As remarked after Theorem~\ref{th:geometric_characterization}, these maps are canonically defined through the action of $S$ on $M$ (note that in the case $\mathfrak{s}\cong\mathfrak{sl}_2(\mathds{R})$, $\widetilde{\textnormal{SL}_2(\mathds{R})}$ covers $S$). By definition, \[\psi(x_0)=\pi(\psi(x_0),e)=\pi((\psi,e)(x_0,e))=\pi(x_0,e)=x_0\] in the first case. Analogously, \[\psi(x_0)=\pi([e,\psi(x_0)])=\pi([e,\psi]([e,x_0]))=\pi([e,x_0])=x_0\] in the second case.
It follows that $(\psi,e)$ and $[e,\psi]$, respectively, have the fixed point $(x_0,e)$ on $N \times \widetilde{\textnormal{SL}_2(\mathds{R})}$ or $[e,x_0]$ on $S \times_{Z(S)} N$, respectively. But $\Gamma$ acts freely, so it has to be the identity element. Thus, the kernel of the projection $\Gamma \to \Gamma_0$ is trivial.

It follows that $\Gamma$ projects isomorphically to $\Gamma_0$. In case~(i) of the theorem, we obtain that $\Gamma$ is the graph of a homomorphism $\varrho:\Gamma_0 \to \textnormal{Isom}(N)$.

$\Gamma$ corresponds to the group of deck transformations. Since $M$ is homogeneous, the centralizer of $\Gamma$ in the isometry group of the covering manifold acts transitively (cf.~\cite[Theorem~2.5]{Wo61}). This completes the proof of Theorem~\ref{th:homogeneous_characterization}.


\subsection{Reductivity}\label{par:reductive}

Let $M$ be a compact homogeneous Lorentzian manifold and $G:=\textnormal{Isom}^0(M)$. $G$ acts transitively on the Lorentzian manifold $M$. We consider the isotropy group $H \subseteq G$ of some point $x \in M$. Denote by $\mathfrak{h}\subseteq\mathfrak{g}$ the corresponding Lie algebras.

If $G$ is compact, $H$ is compact as well, since $H$ is a closed subgroup. Therefore, $\mathfrak{g}$ possesses an ad-invariant symmetric bilinear form $b$ that is positive definite. $G$ is connected, therefore, $b$ is $\textnormal{Ad}(G)$-invariant. Now choose $\mathfrak{m}$ to be the $b$-orthogonal complement to $\mathfrak{h}$ in $\mathfrak{g}$. $\mathfrak{m}$ is $\textnormal{Ad}(H)$-invariant, because $\mathfrak{h}$ and $b$ are.

Suppose in the following that $G$ is not compact. By Proposition~\ref{prop:H_compact}, the connected components of $H$ are compact. To finish the proof of Theorem~\ref{th:homogeneous_reductive}, we have to find a reductive representation of $M$.

According to Theorems~\ref{th:algebraic_classification} and~\ref{th:homogeneous_characterization}, we have a decomposition $\mathfrak{g}=\mathfrak{k}\oplus\mathfrak{a}\oplus\mathfrak{s}$, where $\mathfrak{k}$ is compact semisimple, $\mathfrak{a}$ is abelian, and $\mathfrak{s}$ is either isomorphic to $\mathfrak{sl}_2(\mathds{R})$ or to $\mathfrak{he}_d^\lambda$, $\lambda \in \mathds{Z}_+^d$. Moreover, the induced bilinear form $\kappa$ on $\mathfrak{g}$ is Lorentzian by Proposition~\ref{prop:condition_star} and Theorem~\ref{th:algebraic}.

$\kappa$ is ad-invariant and $G$ is connected, so $\kappa$ is $\textnormal{Ad}(G)$-invariant. Since the subgroup generated by $\mathfrak{s}$ acts locally freely on $M$ by Theorem~\ref{th:locally_free}, $\mathfrak{s}\cap\mathfrak{h}=\left\{0\right\}$. It follows from Theorem~\ref{th:algebraic} and Proposition~\ref{prop:H_compact} that $\kappa$ restricted to $\mathfrak{h} \times \mathfrak{h}$ is positive definite. For this, note that $\mathfrak{z(s)}$ is $\kappa$-isotropic in the case $\mathfrak{s}\cong\mathfrak{he}_d^\lambda$.

Choosing $\mathfrak{m}$ to be the $\kappa$-orthogonal complement of $\mathfrak{h}$ in $\mathfrak{g}$, we are done.

\subsubsection{Isometry group contains a twisted Heisenberg group}\label{par:reductive_Heisenberg}

We now additionally assume that the Lie algebra of $G:=\textnormal{Isom}^0(M)$ contains a direct summand $\mathfrak{s}$ isomorphic to $\mathfrak{he}_d^\lambda$, $\lambda \in \mathds{Z}_+^d$. In the following, we will give a reductive representation that is more convenient with respect to calculation purposes than the one given before.

According to Proposition~\ref{prop:condition_star} and Theorem~\ref{th:algebraic}, the Lie algebra of $G$ decomposes as a $\kappa$-orthogonal direct sum $\mathfrak{g}=\mathfrak{s}\oplus\mathfrak{k}\oplus\mathfrak{a}$, $\kappa$ being the induced bilinear form on $\mathfrak{g}$. $\mathfrak{a}$ is abelian and $\mathfrak{k}$ is semisimple. Additionally, $\kappa$ restricted to $\mathfrak{s} \times \mathfrak{s}$ is an ad-invariant Lorentzian scalar product on $\mathfrak{s}$ and the restriction to $(\mathfrak{k}\oplus\mathfrak{a})\times(\mathfrak{k}\oplus\mathfrak{a})$ is positive definite.

Due to Proposition~\ref{prop:H_compact}, it holds for the Lie algebra $\mathfrak{h}$ of $H$ that $\mathfrak{h}\subseteq \mathfrak{z(s)}\oplus\mathfrak{k}\oplus\mathfrak{a}$. By Theorem~\ref{th:locally_free}, $\mathfrak{s}\cap\mathfrak{h}=\left\{0\right\}$. Thus, we can consider the $\kappa$-orthogonal complement $\mathfrak{m}^\prime$  of $\mathfrak{h}$ in $\mathfrak{z(s)}\oplus\mathfrak{k}\oplus\mathfrak{a}$. Then, $\mathfrak{m}^\prime$ is $\textnormal{Ad}(H)$-invariant. Note that $\mathfrak{z(s)}\subseteq \mathfrak{m}^\prime$.

$\mathfrak{s}$ is an ideal in $\mathfrak{g}$ and hence $\textnormal{Ad}(G)$-invariant. It follows that $\mathfrak{m}:=\mathfrak{s}+\mathfrak{m}^\prime$ is $\textnormal{Ad}(H)$-invariant and complementary to $\mathfrak{h}$. Thus, $\mathfrak{g}=\mathfrak{m}\oplus \mathfrak{h}$ is a reductive decomposition.

By Theorem~\ref{th:homogeneous_characterization}, $M$ is isometric to $\Gamma\backslash\mkern-5mu\left(S \times_{Z(S)} N\right)$, where $N$ is a compact homogeneous Riemannian manifold and $S$ is provided with the metric defined by an ad-invariant Lorentzian scalar product on $\mathfrak{s}$. Also, $G$ is a central quotient of $S \times_{Z(S)} C$, where $C$ is a subgroup of the isometry group of $N$ acting transitively on $N$. It follows from the proof of Corollary~\ref{cor:isometry_group} that $C$ can be identified with the subgroup in $G$ generated by $\mathfrak{c}:=\mathfrak{z(s)}\oplus\mathfrak{k}\oplus\mathfrak{a}$.

Since $C \subset G$, the isotropy group $H_C$ of the point $x$ in $N$ (remember that we can consider $N$ as a leaf in $M$; see the remark following Theorem~\ref{th:geometric_characterization}) in the group $C$ is contained in $H$. Moreover, since $\mathfrak{h}\subseteq \mathfrak{c}$, it follows that the Lie algebra of $H_C$ is $\mathfrak{h}$ as well. Thus, $N \cong C/H_C$ and $\mathfrak{c}=\mathfrak{m}^\prime\oplus\mathfrak{h}$ is a reductive decomposition. Note that the $\textnormal{Ad}(H_C)$-invariance of $\mathfrak{m}^\prime$ follows from the fact that the latter space is the $\kappa$-orthogonal complement of $\mathfrak{h}$.

The metric on $N$ corresponds to an $\textnormal{Ad}(H_C)$-invariant Riemannian scalar product $(\cdot,\cdot)$ on $\mathfrak{m}^\prime$. Let $\mathfrak{p}$ be the $(\cdot,\cdot)$-orthogonal complement of $\mathfrak{z(s)}$ in $\mathfrak{m}^\prime$. We obtain $\mathfrak{m}^\prime=\mathfrak{z(s)}\oplus\mathfrak{p}$ and $\mathfrak{m}=\mathfrak{s}\oplus\mathfrak{p}$.

Let $\langle \cdot,\cdot \rangle$ denote the Lorentzian scalar product on $\mathfrak{m}$ corresponding to the metric on $M$. It follows from the construction that the direct sum $\mathfrak{m}=\mathfrak{s}\oplus\mathfrak{p}$ is $\langle \cdot,\cdot \rangle$-orthogonal and $\langle \cdot,\cdot \rangle$ restricted to $\mathfrak{s} \times \mathfrak{s}$ is an ad-invariant Lorentzian scalar product. The restriction to $\mathfrak{p} \times \mathfrak{p}$ is Riemannian and equals $(\cdot,\cdot)$.

Note that $S$ furnished with the metric defined by the restriction of $\langle \cdot, \cdot \rangle$ to $\mathfrak{s} \times \mathfrak{s}$ is a symmetric space.


\subsection{Isotropy representation}\label{par:isotropy}

Let $M$ be a compact homogeneous Lorentzian manifold and $G:=\textnormal{Isom}^0(M)$. $G$ acts transitively on the Lorentzian manifold $M$. We consider the isotropy group $H \subseteq G$ of some point $x \in M$. Denote by $\mathfrak{h}\subseteq\mathfrak{g}$ the corresponding Lie algebras. We will use the reductive decomposition $\mathfrak{g}=\mathfrak{m}\oplus\mathfrak{h}$ given in Paragraph~\ref{par:reductive} (for the case that $\mathfrak{g}$ contains a direct factor isomorphic to $\mathfrak{he}_d^\lambda$, we use the decomposition given in Paragraph~\ref{par:reductive_Heisenberg}). Note that $T_{x}M$ can be identified with $\mathfrak{m}$.

\begin{definition}
For a homogeneous space $M=G/H$, the map $H \to \textnormal{GL}(T_{x} M)$, $h \to dh_{x}$, is called the \textit{isotropy representation} of $G/H$. $G/H$ is called \textit{isotropy irreducible} if the isotropy representation is irreducible, that is, there are no other $H$-invariant subspaces in $T_{x} M$ than $\left\{0\right\}$ and $T_{x} M$. $G/H$ is called \textit{weakly isotropy irreducible} if any $H$-invariant subspace in $T_{x} M$ is either trivial or degenerate (with respect to the metric $g_{x}$).
\end{definition}

To prove Theorem~\ref{th:isotropy_representation}, we have to distinguish the cases whether $\mathfrak{g}$ contains a direct factor $\mathfrak{s}\cong\mathfrak{sl}_2(\mathds{R})$ or $\mathfrak{s}\cong\mathfrak{he}_d^\lambda$. Since the ideas of proof are very similar in both cases, we will here only look at the slightly harder case $\mathfrak{s}\cong\mathfrak{he}_d^\lambda$. For the case $\mathfrak{s}\cong\mathfrak{sl}_2(\mathds{R})$, one can find the proof in \cite[Proposition~5.12]{G11}.

\begin{proposition}\label{prop:heis_isotropy}
Let $M$ be a compact homogeneous Lorentzian manifold and denote $G:=\textnormal{Isom}^0(M)$. We consider the isotropy group $H \subseteq G$ of some point $x \in M$. Denote by $\mathfrak{h}\subseteq\mathfrak{g}$ the corresponding Lie algebras. Assume that $\mathfrak{g}$ contains a direct factor $\mathfrak{s}\cong\mathfrak{he}_d^\lambda$. We will use the reductive decomposition $\mathfrak{g}=\mathfrak{m}\oplus\mathfrak{h}$ given in Paragraph~\ref{par:reductive_Heisenberg} (the decomposition $\mathfrak{m}=\mathfrak{s}\oplus\mathfrak{p}$ is $\langle \cdot,\cdot \rangle$-orthogonal, where the $\textnormal{Ad}(H)$-invariant scalar product $\langle \cdot,\cdot \rangle$ corresponds to the metric on $M$).

Then, a decomposition of $\mathfrak{m}$ into a direct sum of weakly irreducible $\textnormal{Ad}(H)$-invariant subspaces is given by the $\langle \cdot,\cdot \rangle$-orthogonal sum $\mathfrak{s}\oplus\mathfrak{p}_1\oplus\ldots\oplus\mathfrak{p}_k$, where the $\mathfrak{p}_j$ are irreducible subspaces of $\mathfrak{p}$. Furthermore, $\mathfrak{s}$ is not irreducible, but cannot be decomposed into irreducible invariant subspaces.

In particular, the space $G/H$ is not isotropy irreducible. It is weakly isotropy irreducible if and only if $M\cong S/\Gamma$.
\end{proposition}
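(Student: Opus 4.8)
The plan is to establish the proposition by separately analysing the summand $\mathfrak{s}\cong\mathfrak{he}_d^\lambda$ and the Riemannian complement $\mathfrak{p}$, exploiting the fact that the reductive decomposition $\mathfrak{m}=\mathfrak{s}\oplus\mathfrak{p}$ from Paragraph~\ref{par:reductive_Heisenberg} is $\langle\cdot,\cdot\rangle$-orthogonal and that $\mathfrak{s}$ is an ideal. First I would record that because $\mathfrak{s}$ is an ideal in $\mathfrak{g}$, it is $\textnormal{Ad}(H)$-invariant, so $\mathfrak{s}$ is an $\textnormal{Ad}(H)$-invariant summand, and by orthogonality its complement $\mathfrak{p}$ is $\textnormal{Ad}(H)$-invariant as well. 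On $\mathfrak{p}$ the restricted form $(\cdot,\cdot)$ is positive definite (it is the Riemannian metric of $N\cong C/H_C$), so the classical Riemannian argument applies: any $\textnormal{Ad}(H)$-invariant subspace has an invariant orthogonal complement, hence $\mathfrak{p}$ splits $(\cdot,\cdot)$-orthogonally into genuinely irreducible invariant pieces $\mathfrak{p}_1\oplus\cdots\oplus\mathfrak{p}_k$. Since $(\cdot,\cdot)$ is definite, each such irreducible $\mathfrak{p}_j$ is automatically weakly irreducible (a proper invariant subspace would be nondegenerate and give a decomposition, contradicting irreducibility). This handles the $\mathfrak{p}$-part of the asserted splitting.

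The heart of the proposition is the behaviour of $\mathfrak{s}$ itself, and this is where I expect the main obstacle. I would first argue that $\mathfrak{s}$ is \emph{not} irreducible by exhibiting a proper nonzero $\textnormal{Ad}(H)$-invariant subspace, namely the center $\mathfrak{z(s)}=\mathds{R}Z$. Concretely, $\mathfrak{h}\subseteq\mathfrak{z(s)}\oplus\mathfrak{k}\oplus\mathfrak{a}$, so $H$ acts on $\mathfrak{s}$ via $\textnormal{ad}$ of elements whose $\mathfrak{s}$-component lies in $\mathfrak{z(s)}$, together with the $T$-rotation coming from $\mathfrak{z(s)}\subseteq\mathfrak{h}$; under all of these the line $\mathds{R}Z$ is preserved (the bracket relations $[X_k,Y_k]=\lambda_kZ$, $[T,X_k]=\lambda_kY_k$, $[T,Y_k]=-\lambda_kX_k$ fix $Z$ and rotate the $X_k,Y_k$ planes). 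Hence $\mathds{R}Z$ is a nontrivial invariant subspace and $\mathfrak{s}$ is reducible. The key point is that $\mathds{R}Z$ is \emph{lightlike}: since $\langle\cdot,\cdot\rangle|_{\mathfrak{s}}$ is the ad-invariant Lorentz form with $\langle Z,Z\rangle=0$, the invariant line $\mathds{R}Z$ is degenerate, which is exactly what weak irreducibility permits.

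Next I would prove that $\mathfrak{s}$ \emph{is} weakly irreducible and cannot be decomposed into irreducible invariant pieces, which together are the crux. For weak irreducibility I must show every nonzero proper $\textnormal{Ad}(H)$-invariant subspace $W\subseteq\mathfrak{s}$ is degenerate; equivalently, that $\mathfrak{s}$ admits no nondegenerate proper invariant subspace, because a nondegenerate invariant $W$ would force $\mathfrak{s}=W\oplus W^\perp$ as an orthogonal direct sum of invariant subspaces and then (inducting on dimension) a full decomposition into irreducibles, contradicting the nondecomposability I am about to establish. So the real work is to show $\mathfrak{s}$ is indecomposable into irreducibles. I would do this by examining the action generated by $\textnormal{ad}_T$ (coming from $T\notin\mathfrak{h}$ but whose adjoint action, together with that of $\mathfrak{z(s)}\subseteq\mathfrak{h}$, controls the invariant subspaces) and showing that the only invariant subspaces are $\{0\}$, $\mathds{R}Z$, $\mathds{R}Z\oplus(\text{sums of rotation planes})$, $\mathds{R}T\oplus\mathds{R}Z\oplus\cdots$, and $\mathfrak{s}$; crucially the lightlike $T$-direction can never be separated from $Z$ in an invariant way because $\langle T,Z\rangle=1$ pairs them and $\textnormal{ad}$ couples $T$ to the rotation planes. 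The obstacle is verifying that there is no invariant splitting isolating a complement to $\mathds{R}Z$ that is itself a sum of irreducibles — i.e. that $Z$ is a ``hidden vector'' forced into any invariant complement of the rotation planes — which follows from the fact that every invariant subspace containing a vector with nonzero $T$-component must also contain $Z$ via repeated application of $\textnormal{ad}_{X_k}$ or the rotation action.

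Finally I would assemble the pieces: $\mathfrak{m}=\mathfrak{s}\oplus\mathfrak{p}_1\oplus\cdots\oplus\mathfrak{p}_k$ is the desired orthogonal decomposition into weakly irreducible invariant summands, and $G/H$ fails to be isotropy irreducible precisely because $\mathfrak{s}$ (indeed $\mathds{R}Z$) is a proper nontrivial invariant subspace. For the last sentence, $G/H$ is weakly isotropy irreducible exactly when there is no nontrivial \emph{nondegenerate} invariant subspace, i.e.\ when the only invariant pieces are the degenerate $\mathfrak{s}$ and nothing Riemannian survives; since each $\mathfrak{p}_j$ is nondegenerate, weak isotropy irreducibility forces $\mathfrak{p}=0$, which by $N\cong C/H_C$ means $N$ is a point and $M\cong S/\Gamma$. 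Conversely, if $M\cong S/\Gamma$ then $\mathfrak{m}=\mathfrak{s}$ is weakly irreducible by the above, giving the stated equivalence.
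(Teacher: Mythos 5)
Your treatment of $\mathfrak{p}$ (positive definite, hence a genuine orthogonal splitting into irreducibles) and your final bracket computation in $\mathfrak{s}$ match the paper, but there is a genuine gap at the crux: you are analysing the wrong action on $\mathfrak{s}$. By Proposition~\ref{prop:H_compact} we have $\mathfrak{h}\subseteq\mathfrak{z(s)}\oplus\mathfrak{k}\oplus\mathfrak{a}$, and this whole algebra \emph{centralizes} $\mathfrak{s}$ (the summands $\mathfrak{k}$, $\mathfrak{a}$ commute with $\mathfrak{s}$, and $\mathfrak{z(s)}$ is its center), so $\textnormal{ad}(\mathfrak{h})|_{\mathfrak{s}}=0$ and the identity component $H^0$ acts trivially on $\mathfrak{s}$. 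Consequently no infinitesimal argument can establish weak irreducibility --- if $H$ were connected, \emph{every} subspace of $\mathfrak{s}$ would be invariant. Your proposed mechanism, ``the action generated by $\textnormal{ad}_T$ together with that of $\mathfrak{z(s)}\subseteq\mathfrak{h}$'', does not exist: $T\notin\mathfrak{h}$ (so $\textnormal{ad}_T$ is not part of the isotropy representation), $\textnormal{ad}_Z$ vanishes on $\mathfrak{s}$ anyway, and the inclusion $\mathfrak{z(s)}\subseteq\mathfrak{h}$ is false --- in fact $\mathfrak{z(s)}\cap\mathfrak{h}=\{0\}$ by local freeness of the $S$-action (Theorem~\ref{th:locally_free}).

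The missing idea is that all the nontrivial action on $\mathfrak{s}$ comes from the \emph{component group} of $H$. The paper uses $M\cong\Gamma\backslash(S\times_{Z(S)}N)$ with $\Gamma$ projecting isomorphically onto a uniform lattice $\Gamma_0\subset S/Z(S)$, and for each $\gamma$ constructs an isometry $\varphi_\gamma\in H$ by composing the deck transformation $[\gamma,\psi_\gamma]$ with an element of the transitive group $C$ correcting the $N$-component; since $C$ centralizes $\mathfrak{s}$, $\textnormal{Ad}_{\varphi_\gamma}|_{\mathfrak{s}}=\textnormal{Ad}^S_\gamma$. Hence every $\textnormal{Ad}(H)$-invariant subspace of $\mathfrak{s}$ is $\textnormal{Ad}^{S/Z(S)}(\Gamma_0)$-invariant, and because $\Gamma_0$ meets the nilradical in a lattice that is Zariski-dense in the simply connected nilpotent group $\widetilde{\textnormal{He}_d}/Z(\widetilde{\textnormal{He}_d})$, such a subspace is $\textnormal{ad}(\mathfrak{he}_d)$-invariant. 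Only at this point does your bracket computation (any nontrivial $\textnormal{ad}(\mathfrak{he}_d)$-invariant subspace either lies in $\mathfrak{he}_d$ and contains $Z$, hence is degenerate, or contains some $T+X$ and equals $\mathfrak{s}$) become applicable. Two smaller points: your reduction of weak irreducibility to indecomposability is not valid in general (splitting off nondegenerate invariant subspaces inductively only yields weakly irreducible, not irreducible, pieces), whereas the classification of $\textnormal{ad}(\mathfrak{he}_d)$-invariant subspaces gives both claims at once; and in the last equivalence $\mathfrak{p}=0$ makes $N$ a $Z(S)$-orbit (one-dimensional), not a point, though the conclusion $M\cong S/\Gamma$ stands.
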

\begin{proof}
By Theorem~\ref{th:homogeneous_characterization}, there is a uniform lattice $\Gamma_0$ in $S/Z(S)$ such that $\Gamma$ projects isomorphically to $\Gamma_0$. Remember that $[\gamma,\psi_\gamma] \in \Gamma \subset S \times_{Z(S)} \textnormal{Isom}_{Z(S)}(N)$ is acting on $S \times_{Z(S)} N$ by \[[\gamma,\psi_\gamma]([f,x])\mapsto [f\gamma,\psi_\gamma(x)].\] We remarked this after Theorem~\ref{th:geometric_characterization}. Denote by $\pi: S \times_{Z(S)} N \to M$ the covering given by the theorem. Let $e$ be the identity element of $S$. Choose $\gamma \in \Gamma_0 \cdot Z(S) \subset S$ arbitrary and $\psi_\gamma$ in $\textnormal{Isom}_{Z(S)}(N)$ such that $[\gamma,\psi_\gamma] \in \Gamma$.

Since $C$ acts transitively on $N$, there is $\psi \in C$ such that $\psi_\gamma(x)=\psi(x)$. Consider now the map $\varphi_{\gamma}: M \to M$ defined by \[\varphi_{\gamma}(\pi([f,y])):=\pi[(f\gamma,\psi(y))].\] Due to Theorem~\ref{th:homogeneous_characterization}, $C\subseteq \textnormal{Isom}_{Z(S)}(N)$ centralizes $\Gamma$, therefore, $\varphi_{\gamma}$ is correctly defined. By construction, $\varphi_{\gamma}$ is an isometry and \[x=\pi([e,x])=\pi([\gamma,\psi_\gamma(x)])=\pi([\gamma,\psi(x)]),\] that is, $\varphi_{\gamma} \in H$.

$\mathfrak{s}$ is an ideal in $\mathfrak{g}$, in particular $\textnormal{Ad}(H)$-invariant. $\mathfrak{p}$ is the $\langle \cdot,\cdot \rangle$-orthogonal complement of $\mathfrak{s}$ in $\mathfrak{m}$, so it is $\textnormal{Ad}(H)$-invariant as well. Thus, $\mathfrak{m}=\mathfrak{s}\oplus\mathfrak{p}$ is a decomposition into invariant subspaces. Since the restriction of $\langle \cdot,\cdot \rangle$ to $\mathfrak{p}$ is positive definite, we obtain by induction an orthogonal decomposition $\mathfrak{p}=\mathfrak{p}_1\oplus\ldots\oplus\mathfrak{p}_k$ into invariant irreducible subspaces. It remains to show that $\mathfrak{s}$ is weakly irreducible.

Because $C$ centralizes $\mathfrak{s}$, the adjoint action of $C$ on $\mathfrak{s}$ is trivial. Hence, $\textnormal{Ad}_{\varphi_{\gamma}}|_{\mathfrak{s}}=\textnormal{Ad}^S_{\gamma}$, $\textnormal{Ad}^S$ being the adjoint action of $S$. Since the adjoint action of the center is trivial, $S/Z(S)$ acts on $\mathfrak{s}$ through the adjoint action as well. We denote this action by $\textnormal{Ad}^{S/Z(S)}$.

Thus, any $\textnormal{Ad}(H)$-invariant subspace of $\mathfrak{s}$ is also $\textnormal{Ad}^{S/Z(S)}(\Gamma_0)$-invariant.

Let $N$ denote the nilradical of $S/Z(S)$. $N$ is isomorphic to $\widetilde{\textnormal{He}_d}/Z(\widetilde{\textnormal{He}_d})\cong \mathds{R}^{2d}$. Moreover, $\Gamma_0 \cap N$ is a lattice in $N$ (cf.~\cite[Part~I, Chapter~2, Theorem~3.6]{OnVin00}), and $\Gamma_0 \cap N$ is Zariski-dense in $N$ since $N$ is simply-connected and nilpotent (cf.~\cite[Part~I, Chapter~2, Theorem~2.4]{OnVin00}). It follows that any subspace of $\mathfrak{s}$ invariant under $\textnormal{Ad}^{S/Z(S)}(\Gamma_0)$ is $\textnormal{ad}(\mathfrak{he}_d)$-invariant.

Let $\mathfrak{i} \subseteq \mathfrak{s}$ be a non-trivial $\textnormal{ad}(\mathfrak{he}_d)$-invariant subspace. If $\mathfrak{i} \subseteq \mathfrak{he}_d$, it follows that $Z \in \mathfrak{i}$ and $\mathfrak{i}$ is degenerate. Otherwise, $T+X \in \mathfrak{i}$, $X \in \mathfrak{he}_d$. But $[\mathfrak{he}_d,T]$ is equal to the subspace generated by $X_1,Y_1,\ldots,X_d,Y_d$. Thus, all $X_k$ and $Y_k$, $k=1,\ldots, d$, are contained in $\mathfrak{i} + \mathds{R}Z$. Applying $\textnormal{ad}(\mathfrak{he}_d)$-invariance another time, we obtain $Z \in \mathfrak{i}$, so $\mathfrak{i}=\mathfrak{s}$. It follows that $\mathfrak{s}$ is weakly irreducible.

Since $\mathfrak{he}_d$ is an ideal in $\mathfrak{g}$, it is $\textnormal{Ad}(H)$-invariant. Thus, $\mathfrak{s}$ is not irreducible. We have seen above that any non-trivial $\textnormal{Ad}(H)$-invariant subspace $\mathfrak{i}\subseteq \mathfrak{s}$ contains $\mathfrak{z(s)}$. Therefore, $\mathfrak{s}$ cannot be decomposed into irreducible invariant subspaces.
\end{proof}
\begin{remark}
Note that the subspaces $\mathfrak{p_j}$ in Proposition~\ref{prop:heis_isotropy} are $\textnormal{Ad}(H_C)$-invariant, since $H_C \subseteq H$.
\end{remark}


\subsection{Curvature}\label{par:curvature}

Let $M$ be a compact homogeneous Lorentzian manifold and $G:=\textnormal{Isom}^0(M)$.

In the case that its Lie algebra $\mathfrak{g}$ contains a direct factor isomorphic to $\mathfrak{sl}_2(\mathds{R})$, $M$ is covered by the metric product $\widetilde{M}:=N \times \widetilde{\textnormal{SL}_2(\mathds{R})}$ due to Theorem~\ref{th:homogeneous_characterization}. Here, $N$ is a compact homogeneous Riemannian manifold and $S:=\widetilde{\textnormal{SL}_2(\mathds{R})}$ is provided with the metric defined by a positive multiple of the Killing form of $\mathfrak{sl}_2(\mathds{R})$. The local geometries of $M$ and $\widetilde{M}$ coincide and it is standard to describe the curvatures of $\widetilde{M}$ through the curvatures of $N$ and $S$. One can find the results in \cite[Sections~5.3.2.2 and~5.3.2.3]{G11}. In particular, $M$ cannot be Ricci-flat in this case since $S$ is not. 

Therefore, if $G$ is not compact, it remains the case that $\mathfrak{g}$ contains a direct factor $\mathfrak{s}$ isomorphic to $\mathfrak{he}_d^\lambda$, $\lambda \in \mathds{Z}_+^d$. In the following, we will use the reductive representation given in Paragraph~\ref{par:reductive_Heisenberg} and use the same notations.

For $X \in \mathfrak{m}=\mathfrak{s}\oplus \mathfrak{p}$, we will write $X_{\mathfrak{s}}$ for the $\mathfrak{s}$-component and $X_{\mathfrak{p}}$ for the $\mathfrak{p}$-component. In a similar way, $X_{\mathfrak{m}}$ and $X_{\mathfrak{m}^\prime}$ denote the $\mathfrak{m}$- or $\mathfrak{m}^\prime$-component for $X \in \mathfrak{g}=\mathfrak{m}\oplus \mathfrak{h}$ or $X \in \mathfrak{c}=\mathfrak{m}^\prime \oplus \mathfrak{h}$, respectively.

For $X \in \mathfrak{c}=\mathds{R}Z \oplus \mathfrak{p} \oplus \mathfrak{h}$, $X_Z$ denotes the $Z$-component and for an element $X \in \mathfrak{g}=(\mathds{R}T \inplus \mathfrak{he}_d) \oplus \mathfrak{k}\oplus\mathfrak{a}$, we will write $X_T$ for the $T$-component.

Let $\left\{T, Z, X_1, Y_1, \ldots, X_d, Y_d\right\}$ be a canonical basis of $\mathfrak{s}$. By Proposition~\ref{prop:lorentz_heisenberg}~(ii), all ad-invariant Lorentzian scalar products on $\mathfrak{s}$ are equivalent. Since the scalar product $\langle \cdot, \cdot \rangle$ restricted to $\mathfrak{s} \times \mathfrak{s}$ is ad-invariant, we may assume without loss of generality that $\left\{X_1, Y_1, \ldots, X_d, Y_d\right\}$ is $\langle \cdot, \cdot \rangle$-orthonormal, $\langle T,T \rangle=0$ and $\langle T,Z \rangle=1$. Note that also $\langle Z,Z \rangle=0$.

\begin{definition}
The map $V: \mathfrak{m} \times \mathfrak{m} \to \mathfrak{m}$ is determined by \[2 \langle V(X,Y) , W\rangle=\langle [W_{\mathfrak{p}},X_{\mathfrak{p}}]_Z, Y_T \rangle+\langle X_T,[W_{\mathfrak{p}},Y_{\mathfrak{p}}]_Z \rangle\] for all $W \in \mathfrak{m}$ and for any $X,Y \in \mathfrak{m}$.
\end{definition}
\begin{remark}
$V$ is symmetric and bilinear. For all $X,Y \in \mathfrak{m}$, $V(X,Y) \in \mathfrak{p}$.
\end{remark}

\begin{definition}
In the situation above, we call the homogeneous space $M$ \textit{special} if $V \equiv 0$.
\end{definition}

\begin{proposition}\label{prop:special}
In the situation above, the following are true:
\begin{enumerate}
\item $M$ is special if and only if $[\mathfrak{p},\mathfrak{p}]_Z=\left\{0\right\}$.
\item If $M$ is special, $N$ is covered isometrically by the metric product $Z(S) \times N^\prime$ and $M$ is covered isometrically by the metric product $S \times N^\prime$. Here, $Z(S)$ is furnished with a homogeneous metric (that is unique up to multiplication with a constant), $S$ is provided with the bi-invariant metric defined in the proof of Proposition~\ref{prop:lorentz_heisenberg}~(i), and $N^\prime$ is a homogeneous Riemannian space.
\end{enumerate}
\end{proposition}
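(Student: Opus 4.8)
The plan for (i) is to read off both implications directly from the defining formula for $V$, using that $V(X,Y)\in\mathfrak{p}$ for all $X,Y$ and that $\langle\cdot,\cdot\rangle$ restricted to $\mathfrak{p}$ is positive definite (in particular $\mathfrak{p}$ is $\langle\cdot,\cdot\rangle$-orthogonal to $\mathfrak{s}$). Since the image of $V$ lies in $\mathfrak{p}$, the condition $V\equiv 0$ is equivalent to $\langle V(X,Y),W\rangle=0$ for all $X,Y\in\mathfrak{m}$ and all $W\in\mathfrak{p}$. If $[\mathfrak{p},\mathfrak{p}]_Z=\{0\}$, then every bracket $[W_{\mathfrak{p}},X_{\mathfrak{p}}]_Z$ and $[W_{\mathfrak{p}},Y_{\mathfrak{p}}]_Z$ occurring on the right-hand side vanishes, so $V\equiv 0$. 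Conversely, assuming $V\equiv 0$, I would test the defining identity with $X=T$ (so $X_T=T$ and $X_{\mathfrak{p}}=0$), with $Y=Q\in\mathfrak{p}$ and $W=P\in\mathfrak{p}$: the first summand drops out, and the second becomes $\langle T,[P,Q]_Z\rangle$, which equals the $Z$-coefficient of $[P,Q]$ because $\langle T,Z\rangle=1$ and $\langle T,T\rangle=0$. As this must vanish for all $P,Q\in\mathfrak{p}$, we conclude $[\mathfrak{p},\mathfrak{p}]_Z=\{0\}$.

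For (ii), assume $M$ is special, equivalently $[\mathfrak{p},\mathfrak{p}]_Z=\{0\}$ by (i). The main step is to show that on the compact homogeneous Riemannian manifold $N\cong C/H_C$, equipped with the reductive decomposition $\mathfrak{c}=\mathfrak{m}'\oplus\mathfrak{h}$, $\mathfrak{m}'=\mathfrak{z(s)}\oplus\mathfrak{p}=\mathds{R}Z\oplus\mathfrak{p}$ and metric $(\cdot,\cdot)$, the two $\textnormal{Ad}(H_C)$-invariant, mutually orthogonal distributions $\mathcal{Z}$ (tangent to the $Z(S)$-orbits, corresponding to $\mathds{R}Z$) and its orthogonal complement (corresponding to $\mathfrak{p}$) are both parallel for the Levi-Civita connection. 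I would compute this connection from the Nomizu (Koszul) formula, using two facts: $\mathds{R}Z=\mathfrak{z(s)}$ is central in $\mathfrak{c}$, so $[\,\cdot\,,Z]=0$ on $\mathfrak{c}$; and $[\mathfrak{p},\mathfrak{p}]_Z=\{0\}$. These yield $\nabla_Z Z=0$, $\nabla_Z P=\nabla_P Z=0$, and $\nabla_P Q\in\mathfrak{p}$ for all $P,Q\in\mathfrak{p}$, so both distributions are $\nabla$-invariant at the base point; since they are $C$-invariant and $C$ acts transitively by isometries, parallelism holds everywhere. The same computation carried out on $M$ identifies $V$ with the $\mathfrak{s}$--$\mathfrak{p}$ coupling term in the Levi-Civita connection of $M$, which is the conceptual reason that $V\equiv 0$ forces a splitting.

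Once both distributions are parallel and $N$ is complete (being compact), I would invoke the de Rham decomposition theorem: passing to the universal cover $\widetilde{N}$, it splits isometrically as $\mathds{R}\times\widetilde{N}'$, the $\mathds{R}$-factor being the complete geodesic leaf of the lift of $\mathcal{Z}$. The deck group $\pi_1(N)$ preserves the two de Rham factors (they are of different type and the $\mathcal{Z}$-factor is intrinsically distinguished), while the $Z(S)$-action lifts to translations along the $\mathds{R}$-factor; descending through the part of $\pi_1(N)$ acting on $\widetilde{N}'$ produces an isometric covering $Z(S)\times N'\to N$, with $Z(S)\cong\mathds{S}^1$ or $\mathds{R}$ carrying the translation-invariant homogeneous metric induced by $(Z,Z)$ (unique up to scale) and $N'$ a leaf of the $\mathfrak{p}$-distribution. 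That $N'$ is homogeneous follows because the de Rham factors are preserved by the transitive isometry group coming from $C$, so its induced action on the $N'$-factor is transitive. Finally I would feed this back into the twisted product: $M$ is covered by $S\times_{Z(S)}N$, and substituting $N=Z(S)\times N'$ gives $S\times_{Z(S)}(Z(S)\times N')\cong S\times N'$, a metric product of $S$ (with the bi-invariant metric from $\langle\cdot,\cdot\rangle|_{\mathfrak{s}}$ of Proposition~\ref{prop:lorentz_heisenberg}~(i)) and $N'$. I expect the main obstacle to be exactly this global passage --- making the de Rham splitting descend to the stated covering while correctly tracking the $Z(S)$-action and verifying transitivity on $N'$ --- since the infinitesimal parallelism and part (i) are short computations by comparison.
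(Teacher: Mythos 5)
Your part~(i) is exactly the paper's argument (test the defining identity of $V$ with $X=T$, $Y,W\in\mathfrak{p}$ and use $\langle T,Z\rangle=1$), so nothing to add there. For part~(ii) you take a genuinely different route. The paper's proof is a pointer to \cite[Lemma~5.15]{G11} with the following sketch: one re-runs the covering construction from the proof of Theorem~\ref{th:geometric_characterization}, observing that under $[\mathfrak{p},\mathfrak{p}]_Z=\{0\}$ the distribution $\mathcal{O}$ orthogonal to the $S$-orbits (not just $\mathcal{O}+\mathcal{Z}$) is involutive; taking a leaf $N^\prime$ of $\mathcal{O}$, the already-established Riemannian-covering argument then yields $S\times N^\prime\to M$ directly, and $Z(S)\times N^\prime\to N$ as a by-product. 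You instead work on $N\cong C/H_C$, use the Nomizu formula together with the centrality of $Z$ in $\mathfrak{c}$ and $[\mathfrak{p},\mathfrak{p}]_Z=\{0\}$ to show that $\mathds{R}Z$ and $\mathfrak{p}$ define complementary orthogonal \emph{parallel} distributions, and then invoke the global de~Rham splitting of $\widetilde{N}$ and descend. Your computation is correct (I checked that $U(Z,\cdot)=0$ and $\Lambda(P)Q\in\mathfrak{p}$ under your hypotheses), and the descent is legitimate because the $\mathcal{Z}$-distribution is defined on $N$ itself, so its lift is deck-invariant and every deck transformation splits as a product --- though your parenthetical ``they are of different type'' is not the right justification, since $\widetilde{N}^\prime$ could itself contain a flat factor; the intrinsic invariance of $\mathcal{Z}$ is what carries the argument. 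The trade-off: the paper's route reuses the covering machinery of Theorem~\ref{th:geometric_characterization} and needs only involutivity (Frobenius), which is weaker and cheaper than parallelism; your route costs the de~Rham theorem but buys strictly more local information (a holonomy-invariant splitting of $T N$, which is also what underlies the holonomy statements in \cite[Section~5.3.3.4]{G11}) and makes the conceptual role of $V$ as the $\mathfrak{s}$--$\mathfrak{p}$ coupling in the Levi-Civita connection explicit. One small inaccuracy to fix: the final metric on the $S$-factor of $S\times N^\prime$ is the bi-invariant metric of Proposition~\ref{prop:lorentz_heisenberg}~(i) only after applying the normalization of Proposition~\ref{prop:lorentz_heisenberg}~(ii), i.e.\ it is $\langle\cdot,\cdot\rangle|_{\mathfrak{s}\times\mathfrak{s}}$ up to the stated equivalence, as already arranged at the start of Paragraph~\ref{par:curvature}.
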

\begin{proof}
(i) If $[\mathfrak{p},\mathfrak{p}]_Z=\left\{0\right\}$, the statement is obvious.

Conversely, if $V \equiv 0$, \[\langle [W,X]_Z, T \rangle=0\] for all $W,X \in \mathfrak{p}$. Because of $\langle T,Z \rangle =1$, $[\mathfrak{p},\mathfrak{p}]_Z=\left\{0\right\}$ follows.

(ii) One can find a proof in \cite[Lemma~5.15]{G11}. Essentially, one repeats the proof of Theorem~\ref{th:geometric_characterization}. Then, the orthogonal distribution $\mathcal{O}$ orthogonal to the orbits of $S$ will be involutive due to $[\mathfrak{p},\mathfrak{p}]_Z=\left\{0\right\}$.
\end{proof}

A detailed investigation of the curvatures of $M$ is given in \cite[Section~5.3.3.4]{G11}. The formulas drastically simplify if $M$ is special. One can also describe the holonomy algebra of $M$ in this case. As an example of the results of \cite{G11}, we present the formula for the Ricci curvature of $M$ (see \cite[Proposition~5.20]{G11}).

\begin{proposition}\label{prop:Ric}
Let $\left\{W_1,\ldots,W_m\right\}$ be any orthonormal basis of $\mathfrak{p}$ and $X\in \mathfrak{m}$. Then,
\begin{align*}
\textnormal{Ric}(X,X)&=\textnormal{Ric}^S(X_{\mathfrak{s}},X_{\mathfrak{s}})+\textnormal{Ric}^N(X_{\mathfrak{p}},X_{\mathfrak{p}})-\frac{1}{(Z,Z)} (U^N(X_{\mathfrak{p}},Z),U^N(X_{\mathfrak{p}},Z))\\
&-\frac{1}{2}\sum\limits_{j=1}^m\langle X_T, [W_j,[W_j,X_{\mathfrak{p}}]_{\mathfrak{m}^\prime}]_Z \rangle
+\frac{3}{4}\sum\limits_{j=1}^m([X_{\mathfrak{p}},W_j]_Z,[X_{\mathfrak{p}},W_j]_Z)\\
&+2 \sum\limits_{j=1}^m \langle X_T,[U^N(X_{\mathfrak{p}},W_j),W_j]_Z \rangle
+\frac{1}{4}\sum\limits_{j,k=1}^m (\langle X_T, [W_k,W_j]_Z \rangle)^2\\
&-\sum\limits_{j=1}^m \langle X_T,[U^N(W_j,W_j),X_{\mathfrak{p}}]_Z\rangle
\end{align*}
Here, $\textnormal{Ric}^S$ and $\textnormal{Ric}^N$ correspond to the homogeneous spaces $S$ and $N$. The map $U^N:\mathfrak{m}' \times\mathfrak{m}' \to \mathfrak{m}'$ is defined by $2\langle U^N(A,B), C \rangle=\langle [C,A]_{\mathfrak{m}'}, B \rangle+\langle A,[C,B]_{\mathfrak{m}'} \rangle$.
\end{proposition}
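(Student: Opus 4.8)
The plan is to carry out the standard curvature computation for the reductive homogeneous space $M\cong G/H$ with $\mathfrak{g}=\mathfrak{m}\oplus\mathfrak{h}$ and $\mathfrak{m}=\mathfrak{s}\oplus\mathfrak{p}$, and then to read off how the two ``diagonal'' blocks reproduce $\textnormal{Ric}^S$ and $\textnormal{Ric}^N$ while the $\mathfrak{s}$--$\mathfrak{p}$ mixing is governed by the map $V$. First I would recall the Nomizu description of the Levi-Civita connection: for $X,Y\in\mathfrak{m}$ one has $\nabla_XY=\tfrac12[X,Y]_{\mathfrak{m}}+U(X,Y)$, where the symmetric map $U$ is determined by $2\langle U(X,Y),W\rangle=\langle[W,X]_{\mathfrak{m}},Y\rangle+\langle X,[W,Y]_{\mathfrak{m}}\rangle$, and the curvature endomorphism at the base point is given by the standard reductive formula $R(X,Y)=[\Lambda(X),\Lambda(Y)]-\Lambda([X,Y]_{\mathfrak{m}})-\textnormal{ad}([X,Y]_{\mathfrak{h}})$ with $\Lambda(X)=\tfrac12[X,\cdot]_{\mathfrak{m}}+U(X,\cdot)$ (cf.~\cite{G11}). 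Tracing $\langle R(\cdot,X)X,\cdot\rangle$ over a suitable basis of $\mathfrak{m}$ then yields the quadratic form $\textnormal{Ric}(X,X)$, which determines $\textnormal{Ric}$ by symmetry.

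The decisive simplification comes from the algebraic structure of the decomposition: $\mathfrak{s}$ is an ideal, $\mathfrak{p}\subseteq\mathfrak{c}$ centralises $\mathfrak{s}$, so $[\mathfrak{p},\mathfrak{s}]=\{0\}$, the bracket $[\mathfrak{p},\mathfrak{p}]$ lands in $\mathfrak{c}=\mathds{R}Z\oplus\mathfrak{p}\oplus\mathfrak{h}$, and $Z$ is central in all of $\mathfrak{g}$. From these I would first prove the clean identity $U(X,Y)=V(X,Y)+U^N(X_{\mathfrak{p}},Y_{\mathfrak{p}})$. Indeed, ad-invariance of $\langle\cdot,\cdot\rangle|_{\mathfrak{s}}$ (equivalently, the fact noted at the end of Paragraph~\ref{par:reductive_Heisenberg} that $S$ carries a bi-invariant, hence symmetric-space, metric) together with $[\mathfrak{p},\mathfrak{s}]=\{0\}$ gives $U(X_{\mathfrak{s}},Y_{\mathfrak{s}})=0$; a direct check against the basis shows $U(X_{\mathfrak{p}},Y_{\mathfrak{p}})=U^N(X_{\mathfrak{p}},Y_{\mathfrak{p}})\in\mathfrak{p}$, where the agreement with the $N$-datum uses that $\langle\cdot,\cdot\rangle$ and $(\cdot,\cdot)$ coincide on $\mathfrak{p}$ and that $Z$ is central; and the mixed block $U(X_{\mathfrak{s}},Y_{\mathfrak{p}})$ is exactly $V(X,Y)$, since by its definition $V$ is the part of $U$ coupling the $Z$-component of $[\mathfrak{p},\mathfrak{p}]$ to the $T$-direction via $\langle T,Z\rangle=1$, $\langle T,T\rangle=\langle Z,Z\rangle=0$. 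With $U$ understood, the purely-$\mathfrak{s}$ part of $R$ is the curvature of the bi-invariant metric and contributes $\textnormal{Ric}^S(X_{\mathfrak{s}},X_{\mathfrak{s}})$, while the purely-$\mathfrak{p}$ part reproduces the homogeneous curvature of $N\cong C/H_C$ for the reductive datum $(\mathfrak{c}=\mathfrak{m}^\prime\oplus\mathfrak{h},(\cdot,\cdot))$.

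When tracing I would split $\mathfrak{m}$ into the orthonormal family $\{W_j\}$ spanning $\mathfrak{p}$, the orthonormal vectors $\{X_k,Y_k\}$ in $\mathfrak{s}$, and the hyperbolic pair $\{T,Z\}$. The light-cone pair forces its trace contribution to appear as terms of the form $2\langle R(X,T)\,\cdot\,,Z\rangle$ rather than a sum over an orthonormal vector; this is the structural reason why every mixing term in the statement is paired with $X_T$. Collecting blocks, the $\mathfrak{p}$-block yields $\textnormal{Ric}^N(X_{\mathfrak{p}},X_{\mathfrak{p}})$ together with the correction $-\tfrac1{(Z,Z)}\bigl(U^N(X_{\mathfrak{p}},Z),U^N(X_{\mathfrak{p}},Z)\bigr)$, which repairs the fact that the line $\mathds{R}Z$ is spacelike for $N$ but lightlike for $M$; the $V$-terms and the cross terms between $\{T,Z\}$ and $\{W_j\}$ then produce the remaining four explicit sums involving $[\,\cdot,\cdot\,]_Z$, $U^N$, and the iterated bracket $[W_j,[W_j,X_{\mathfrak{p}}]_{\mathfrak{m}^\prime}]_Z$.

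The main obstacle I anticipate is the bookkeeping forced by having two distinct scalar products in play: the Lorentzian $\langle\cdot,\cdot\rangle$ on $\mathfrak{m}$ and the Riemannian $(\cdot,\cdot)$ on $\mathfrak{m}^\prime$, which agree on $\mathfrak{p}$ but disagree precisely on the degenerate line $\mathds{R}Z$, where $\langle Z,Z\rangle=0$ while $(Z,Z)>0$. One must translate the $N$-data (its map $U^N$ and its $\mathfrak{m}^\prime$-projections, all defined with $(\cdot,\cdot)$ and treating $Z$ as a genuine metric direction) into quantities meaningful for the $M$-metric, and it is exactly this translation that produces the factor $1/(Z,Z)$ and the asymmetric coefficients $\tfrac34,\tfrac14,\tfrac12,2$. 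The key technical step is the comparison $U(X_{\mathfrak{p}},Y_{\mathfrak{p}})=U^N(X_{\mathfrak{p}},Y_{\mathfrak{p}})$ together with the evaluation of $U^N(X_{\mathfrak{p}},Z)$ through the numbers $[W_j,X_{\mathfrak{p}}]_Z$; once these identifications are fixed, the remaining trace is lengthy but mechanical, and specialising to $[\mathfrak{p},\mathfrak{p}]_Z=\{0\}$ recovers the decoupled special case of Proposition~\ref{prop:special}.
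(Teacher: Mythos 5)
This is the same route the paper takes (it defers the computation to \cite[Section~5.3.3.4]{G11}): the Nomizu formulas for the reductive decomposition of Paragraph~\ref{par:reductive_Heisenberg}, the structural facts that $\mathfrak{s}$ is an ideal centralized by $\mathfrak{p}$ with $Z$ central in $\mathfrak{g}$ and $[\mathfrak{p},\mathfrak{p}]\subseteq\mathds{R}Z\oplus\mathfrak{p}\oplus\mathfrak{h}$, the resulting identity $U=V+U^N$ on $\mathfrak{m}$, and the trace over a basis adapted to the Witt pair $\{T,Z\}$ are all correct and are exactly the ingredients used there. What remains is only the mechanical evaluation of that trace, which is where the coefficients $\tfrac12,\tfrac34,2,\tfrac14$ and the $1/(Z,Z)$-correction are actually confirmed; your plan correctly locates where each of these arises.
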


From Proposition~\ref{prop:Ric} and an investigation of the Ricci curvature of the twisted Heisenberg group $S$, which can be found in \cite[Section~5.3.3.3]{G11}, it follows that for $X,Y \in \mathfrak{he}_d$, \[\textnormal{Ric}(T+X,T+Y)=\frac{1}{2} \sum\limits_{k=1}^d \lambda_k^2+\frac{1}{4}\sum\limits_{j,k=1}^m (\langle T, [W_k,W_j]_Z \rangle)^2>0,\] where $\left\{W_1,\ldots,W_m\right\}$ is any orthonormal basis of $\mathfrak{p}$ and $\lambda=(\lambda_1,\ldots,\lambda_d)$. Thus, $M$ is not Ricci-flat.

The proof of Theorem~\ref{th:homogeneous_not_Ricci_flat} is now immediate: Due to the results above, a compact homogeneous Lorentzian manifold $M$ is not Ricci-flat if the Lie algebra of its isometry group contains a direct summand isomorphic to $\mathfrak{sl}_2(\mathds{R})$. The same is true in the case of a direct summand isomorphic to $\mathfrak{he}_d^\lambda$, $\lambda \in \mathds{Z}_+^d$. So by Theorem~\ref{th:homogeneous_characterization}, $\textnormal{Isom}^0(M)$ is compact.


\section*{Acknowledgment}

I am very grateful to Helga Baum for her dedicated guidance, her useful comments, and her strong support. I especially thank Abdelghani Zeghib for his very helpful answers to my questions.


\bibliographystyle{unsrt}
\bibliography{Geometry_compact_homogeneous_Lorentz}

\end{document}